\newtheorem{theorem}{Theorem}[section]
\newtheorem{lemma}[theorem]{Lemma}
\newtheorem{proposition}[theorem]{Proposition}
\newtheorem{corollary}[theorem]{Corollary} 
\theoremstyle{definition}  
\newtheorem{definition}[theorem]{Definition}
\newtheorem{remark}[theorem]{Remark}
\newcommand{\Tr}{\text{Tr}}%{\text{Tr}\,} 
\newcommand{\id}{\text{id}}
\newcommand{\End}{\text{End}} 
\newcommand{\Spec}{\text{Spec}} 
\renewcommand{\Vec}{\operatorname{Vec}}
\newcommand{\Hom}{\operatorname{Hom}} 
\def\uRep{\underline{\operatorname{Re}}\!\operatorname{p}} 
\newcommand{\Aut}{\text{Aut}}
\newcommand{\Rep}{\operatorname{Rep}}
\newcommand{\uRes}{\underline{\operatorname{Res}}}
\newcommand{\eps}{\varepsilon}
\newcommand{\B}{\mathcal{B}}
\newcommand{\T}{\mathcal{T}}
\newcommand{\C}{\mathcal{C}}
\newcommand{\D}{\mathcal{D}}
\newcommand{\A}{\mathcal{A}}
\newcommand{\K}{\mathcal{K}}
\newcommand{\be}{\mathbf{1}}
\renewcommand{\be}{\mathbf{1}}
\newcommand{\bI}{{\bf I}}
\newcommand{\BZ}{{\mathbb Z}}
\newcommand{\BR}{{\mathbb R}}
\newcommand{\ot}{\otimes}
\newcommand{\Z}{\mathbb{Z}}
\newcommand{\cat}{\mathcal}
\newcommand{\arup}[1]{\stackrel{#1}{\longrightarrow}}
\begin{document}

\title{On Deligne's category $\uRep^{ab}(S_d)$.}

\author{Jonathan Comes}
\address{J.C.: Department of Mathematics,
University of Oregon, Eugene, OR 97403, USA}
\email{jcomes@math.uoregon.edu}
\dedicatory{Dedicated to the memory of Andrei Zelevinsky}

\author{Victor Ostrik}
\address{V.O.: Department of Mathematics,
University of Oregon, Eugene, OR 97403, USA}
\email{vostrik@math.uoregon.edu}

\begin{abstract}
We prove a universal property of Deligne's category $\uRep^{ab}(S_d)$.  Along the way, we classify tensor ideals in the category $\uRep(S_d)$.
\end{abstract}

\date{\today}
%\begin{abstract}
%\end{abstract} 
\maketitle  

%%%%%%%%%%%%%%%%%%%%%%%%%%%%%%%%%%%%%%%%%%%%%%%%%%%%%%%%%%%%%%%%%%%%%%%%%%%%%%%%%%%%
%%%%%%%%%%%%%%%%%%%%%%%%%%%%%%%%%%%%%%%%%%%%%%%%%%%%%%%%%%%%%%%%%%%%%%%%%%%%%%%%%%%%

%\setcounter{section}{-1}\section{Comments} This is just a temporary section for me to keep track of things to fix.\begin{enumerate}\item \end{enumerate}

\section{Introduction}
\subsection{}
Let $F$ be a field of characteristic zero and let $I$ be a finite set.
Let $S_I$ be the symmetric group of the permutations of $I$ and
let $\Rep(S_I)$ be the category of finite dimensional $F-$linear representations of $S_I$
considered as a symmetric tensor category.
%\footnote{in this paper tensor category is a symmetric rigid $F-$linear monoidal category (not necessarily additive) such that $\End(\be)=F$.}. 
Let $X_I\in \Rep(S_I)$ be the space of $F-$valued functions on $I$
with an obvious action of $S_I$. 
The object $X_I$ with pointwise operations has a natural structure of associative commutative 
algebra with unit  $1_{X_I}$
in the category $\Rep(S_I)$. We have a morphism $\Tr : X_I \to F$ defined as a trace
of the operator of left multiplication; clearly the map $X_I \ot X_I \to F$ given by $x\otimes y\mapsto \Tr(xy)$ is a
non-degenerate pairing. Finally, $\Tr (1_{X_I})=\dim (X_I)=|I|$ where $|I|\ge 0$ is the cardinality of $I$.

Now let $G$ be a finite group acting on $d-$dimensional associative commutative unital algebra
$T$ over $F$ such that the pairing $\Tr(xy)$ is non-degenerate. It is easy to see\footnote{Set 
$I$ to be the set of $F-$algebra homomorphisms $T\to \bar F$ where $\bar F$ is an algebraic closure
of $F$ and use an obvious homomorphism $G\to S_I$.} that there
exists a finite set $I$ with $|I|=d$ and an essentially unique tensor functor 
$F: \Rep(S_I)\to \Rep(G)$ such that $F(X_I)\simeq T$ (isomorphism of $G-$algebras); 
in this sense the category $\Rep(S_I)$ is universal category (in the realm of representation
categories of finite groups) with object $X_I$ as above.

\subsection{}\label{Tabc}
Now for arbitrary symmetric tensor category $\T$ one can consider objects $T\in \T$ satisfying the
following:

(a) $T$ has a structure of associative commutative algebra 
(given by the multiplication map $\mu_T : T\ot T\to T$) with unit (given by the map $1_T: \be \to T$);

(b) The object $T$ is rigid. Moreover if we define the map $\Tr: T\to \be$ as a composition
$$T\arup{\id_T\ot coev_T} T\ot T\ot T^*\arup{\mu_T\ot\id_{T^\ast}} T\ot T^*\simeq T^*\ot T\arup{ev_T} \be,$$
then the pairing $T\otimes T\arup{\mu_T} T\arup{\Tr} \be$ is non-degenerate, that is it corresponds
to an isomorphism $T\simeq T^*$ under the identification $\Hom(T\otimes T,\be)=\Hom(T,T^*)$;

(c) We have $\dim(T)=t\in F$ (equivalently $\Tr (1_T)=t$).

For an arbitrary $t\in F$ Deligne defined in \cite{Del07} a symmetric tensor category $\uRep(S_t)$ 
with a distinguished object $X$ which is universal in the following sense:

\begin{proposition} \label{uRepuni}
{\em (\cite[Proposition 8.3]{Del07})} Let $\T$ be a Karoubian symmetric tensor category over $F$.
The functor $\cat{F}\mapsto \cat{F}(X)$ is an equivalence of the category of braided tensor functors 
$\uRep(S_t)\to \T$ with the category of objects $T\in \T$ satisfying {\em (a), (b), (c)} above
and their isomorphisms.
\end{proposition}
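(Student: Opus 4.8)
The plan is to establish the equivalence by constructing explicit functors in both directions and checking they are mutually inverse. Recall Deligne's construction: $\uRep(S_t)$ is defined (via Karoubian envelope) from a skeleton category whose objects are indexed by finite sets, with $\Hom$-spaces spanned by partitions of the disjoint union of the source and target sets, composition given by concatenation of partitions with a factor of $t$ for each connected component that disappears. The distinguished object $X$ is the one-element set, and it carries tautological morphisms: the multiplication $\mu_X$, unit $1_X$, coevaluation, evaluation, and symmetry, all given by the evident partition diagrams. One checks directly from the diagrammatics that $(X,\mu_X,1_X)$ satisfies (a) associativity and commutativity of $\mu_X$ with unit $1_X$, (b) rigidity of $X$ (indeed $X$ is self-dual via the diagrammatic $\coev$, $\ev$) together with the fact that the pairing $\Tr \circ \mu_X$ corresponds to the identity partition $X \otimes X \to X \otimes X$ reinterpreted as $X \otimes X \to \be$, which is manifestly nondegenerate, and (c) $\Tr(1_X)$ evaluates to the closed loop $= t$. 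So $X$ itself gives a canonical object of the target type, and for any braided tensor functor $\cat F$, the object $\cat F(X)$ inherits (a), (b), (c) since these are expressed by equalities of morphisms preserved by tensor functors.

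For the other direction, given $T \in \T$ satisfying (a), (b), (c), I would build a braided tensor functor $\uRep(S_t) \to \T$ as follows. First define it on the skeleton: send the $n$-element set to $T^{\otimes n}$, and send a partition-diagram basis morphism to the corresponding morphism built out of $\mu_T$, $1_T$, $\coev_T$, $\ev_T$ (and associativity/commutativity constraints) according to the combinatorics of the partition — concretely, a block of the partition touching certain tensor factors in source and target is sent to the composite ``multiply all the incoming strands together, then comultiply'' where comultiplication is the adjoint of $\mu_T$ under the self-duality $T \simeq T^*$ of (b). The key verification is that this assignment is well-defined and functorial: the relations in $\uRep(S_t)$ — that concatenating diagrams and removing a closed loop costs a factor $t$ — must be matched by the identities in $\T$, and the loop-removal relation is exactly where hypothesis (c), $\Tr(1_T) = \dim T = t$, is used. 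Here associativity and commutativity of $\mu_T$ guarantee that the morphism attached to a block depends only on the block, not on any chosen order of multiplication, and the Frobenius/nondegeneracy condition (b) ensures ``multiply then comultiply'' behaves correctly, i.e. that $T$ is a commutative Frobenius algebra so that the 2-dimensional-cobordism-like calculus of partitions is respected. Then extend to the Karoubian envelope $\uRep(S_t)$ by the universal property of Karoubian completion (since $\T$ is Karoubian), and check symmetry/braiding is preserved since the symmetry diagram maps to the symmetry of $\T$.

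Finally I would check the two constructions are inverse. Starting from a functor $\cat F$, forming $\cat F(X)$, and then building the functor associated to that object recovers $\cat F$ because both agree on $X$ and on the generating morphisms $\mu_X, 1_X, \coev_X, \ev_X$ (which generate all of $\uRep(S_t)$ as a tensor category), and a tensor functor is determined by its values on generators; conversely, starting from $T$, the associated functor sends $X$ to $T$ with its given structure maps, so applying the first construction returns $T$. On morphisms of the two categories — isomorphisms of objects $T \simeq T'$ versus tensor natural isomorphisms of functors — the correspondence is the obvious one and its bijectivity follows by the same generators argument. I expect the main obstacle to be the well-definedness check in the second construction: verifying that every relation imposed in Deligne's diagrammatic category (in particular all the ways two partition diagrams can be equal after composition, and the loop-value relation) is satisfied by the Frobenius-algebra morphisms in $\T$. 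This is essentially the statement that a commutative Frobenius algebra of dimension $t$ in a symmetric tensor category is the same as a symmetric monoidal functor from the relevant partition category, and carrying it out cleanly requires organizing the partition calculus — isolating a small generating set of relations (associativity, unit, commutativity, Frobenius compatibility, and the loop $= t$) and confirming each holds. Everything else is formal bookkeeping with tensor-functor generalities and the Karoubian universal property.
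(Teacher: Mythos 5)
Your proposal follows essentially the same route as the paper's sketch (Proposition \ref{uni0} together with passage to the Karoubian envelope): define the functor on the skeleton by $[A]\mapsto T^{\otimes A}$, send a one-block partition to ``multiply, then comultiply via the self-duality of (b),'' verify the relations using (a)--(c) with loop removal giving $t$, and extend across the Karoubian completion. The paper, like you, defers the detailed verification of well-definedness to Deligne's \cite[\S 8]{Del07}, so the two accounts agree in both structure and level of detail.
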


Note that for $t=d\in \BZ_{\ge 0}$ Proposition \ref{uRepuni} applied to $T=X_I$ (with $|I|=d$)
produces a canonical functor $\uRep(S_d)\to \Rep(S_d)$ (where $S_d:=S_I$). It is known
(see \cite[Th\'eor\`eme 6.2]{Del07}) that this functor is surjective on Hom's.  Moreover, the morphisms sent to zero by this functor are precisely the so-called negligible morphisms (see \cite[\S 6.1]{Del07}).

\subsection{}
The category $\uRep(S_t)$ is a Karoubian category; it is not abelian for $t=d\in \BZ_{\ge 0}$.
Remarkably, in \cite[Proposition 8.19]{Del07} Deligne defined an {\em abelian} symmetric 
tensor category 
$\uRep^{ab}(S_d)$ and a fully faithful braided tensor functor $\uRep(S_d)\to \uRep^{ab}(S_d)$.\footnote{We refer the reader to \cite[\S 5.8]{Del07} for an example of Karoubian symmetric tensor category
which admits no braided tensor functor to an abelian symmetric tensor category.}  
The main
goal of this paper is to prove a certain universal property of the category $\uRep^{ab}(S_d)$ 
conjectured in \cite[Conjecture 8.21]{Del07}. 

To state this property we need to use the language
of algebraic geometry {\em within} an abelian symmetric tensor category $\T$ (see \cite{Del90}). Namely, for an
object $T\in \T$ satisfying (a), (b), (c) above we can talk about the (affine) $\T-$scheme $\bI :=\Spec(T)$
and the affine group scheme $S_\bI$ of its automorphisms, see \cite[\S 8.10]{Del07}. Furthermore,
assume that the category $\T$ is {\em pre-Tannakian} (see \S \ref{preT} below), 
that is it satisfies finiteness conditions from \cite[2.12.1]{Del90}. Recall that in this case 
a {\em fundamental group} of $\T$ is defined in \cite[\S 8.13]{Del90}.
 This is an affine group scheme $\pi \in \T$ which acts functorially on
any object of $\T$ and this action is compatible with a formation of tensor products. In particular,
the action of $\pi$ on $T$ gives a homomorphism $\eps: \pi \to S_\bI$. Let $\Rep(S_\bI)$ be the
category of representations of $S_\bI$ (see \cite[\S 8.10]{Del07}) and let $\Rep(S_\bI,\eps)$ 
be the full subcategory of $\Rep(S_\bI)$ consisting of such representations $\rho : S_\bI \to GL(V)$
that the action $\rho \circ \eps$ of $\pi$ on $V$ coincides with the canonical action 
(see \cite[\S 8.20]{Del07}).  $\Rep(S_\bI,\eps)$ is an abelian symmetric tensor category and $T$ is one of its
objects. It follows that the functor $\cat{F}: \uRep(S_t)\to \T$ constructed in Proposition \ref{uRepuni}
factorizes as $\uRep(S_t)\xrightarrow{\cat{F}_T} \Rep(S_\bI,\eps)\to \T$ where the functor $\cat{F}_T$ is constructed 
by applying Proposition \ref{uRepuni} to $T\in \Rep(S_\bI,\eps)$ and $\Rep(S_\bI,\eps)\to \T$ is the forgetful
functor. Here is the main result of this paper:

\begin{theorem}\label{main}
{\em (cf. \cite[8.21.2]{Del07})} Let $\T$ be a pre-Tannakian category and $T\in \T$ be an object
satisfying {\em (a), (b), (c)} from \S \ref{Tabc} with $t=d\in \BZ_{\ge 0}\subset F$. Then the category
$\Rep(S_\bI,\eps)$ endowed with the functor $\cat{F}_T: \uRep(S_d)\to \Rep(S_\bI,\eps)$ is equivalent
to one of the following:

{\em (a)} $\Rep(S_d)$ together with the functor $\uRep(S_d)\to \Rep(S_d)$ from \S \ref{Tabc};

{\em (b)} $\uRep^{ab}(S_d)$ together with the fully faithful functor $\uRep(S_d)\to \uRep^{ab}(S_d)$ above.
\end{theorem}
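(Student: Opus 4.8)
The plan is to study the functor $\cat{F}_T: \uRep(S_d) \to \Rep(S_\bI,\eps)$ via its kernel, i.e. the collection of morphisms in $\uRep(S_d)$ that it sends to zero. The first step is to observe that this kernel is a \emph{tensor ideal} in $\uRep(S_d)$: it is closed under composition with arbitrary morphisms and under tensoring with identity morphisms. Here the paper's stated program of classifying tensor ideals in $\uRep(S_d)$ (mentioned in the abstract) becomes essential input. The expectation is that $\uRep(S_d)$ has exactly two tensor ideals: the zero ideal, and the ideal $\mathcal{N}$ of negligible morphisms (the kernel of $\uRep(S_d) \to \Rep(S_d)$ described in \S\ref{Tabc}). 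So the heart of the argument is the dichotomy: either $\cat{F}_T$ is faithful, or its kernel is exactly $\mathcal{N}$.

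In the faithful case, I would argue that $\cat{F}_T$ extends to a fully faithful braided tensor functor $\uRep^{ab}(S_d) \to \Rep(S_\bI,\eps)$. The idea is that $\uRep^{ab}(S_d)$ is obtained from $\uRep(S_d)$ by a universal abelian-envelope construction (it is the category of ind-objects that are "subquotients in a suitable sense," or more precisely Deligne constructs it via \cite[Proposition 8.19]{Del07}); a faithful exact-on-the-nose tensor functor out of $\uRep(S_d)$ into an abelian tensor category should factor through this envelope. One must then show this extended functor is an \emph{equivalence} onto $\Rep(S_\bI,\eps)$, not merely fully faithful; this uses that $\Rep(S_\bI,\eps)$ is generated as an abelian tensor category by $T = \cat{F}_T(X)$ together with its duals and subquotients — which follows from the construction of $\Rep(S_\bI,\eps)$ via the automorphism group scheme $S_\bI$ of $\bI = \Spec(T)$, the point being that every object of $\Rep(S_\bI, \eps)$ is a subquotient of a sum of tensor powers of $T$.

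In the non-faithful case, the kernel is $\mathcal{N}$, so $\cat{F}_T$ factors as $\uRep(S_d) \to \Rep(S_d) \xrightarrow{G} \Rep(S_\bI,\eps)$ with $G$ faithful. One then shows $G$ is an equivalence. Since $\Rep(S_d)$ is semisimple with finitely many simple objects, a faithful tensor functor from it is automatically fully faithful on a generating class and in fact fully faithful; and again $\Rep(S_\bI,\eps)$ is generated by the image of $X$, so $G$ is essentially surjective, hence an equivalence. Compatibility of the functors $\cat{F}_T$ with the structure functors from $\uRep(S_d)$ is automatic since everything is defined by applying Proposition \ref{uRepuni}, so the identifications of the two cases with (a) and (b) respect the distinguished functors from $\uRep(S_d)$.

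The main obstacle I anticipate is twofold. First, establishing the classification of tensor ideals in $\uRep(S_d)$ — that there are only the two — is a genuine combinatorial/representation-theoretic computation inside $\uRep(S_d)$, using the explicit description of $\Hom$-spaces via partition diagrams and the block structure of $\uRep(S_d)$; this is really where the work lies, and it is presumably carried out in the body of the paper. Second, in the faithful case one must verify that a faithful braided tensor functor $\uRep(S_d) \to \T'$ into a pre-Tannakian $\T'$ genuinely extends to $\uRep^{ab}(S_d)$ and that the extension lands as an equivalence onto $\Rep(S_\bI,\eps)$ rather than a proper subcategory — this requires knowing that no objects of $\Rep(S_\bI,\eps)$ are "missed," which rests on the description of $\Rep(S_\bI,\eps)$ in terms of the group scheme $S_\bI$ and the fundamental group $\pi$, and on Deligne's characterization of $\uRep^{ab}(S_d)$ as an abelian envelope. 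Handling the subtlety that $\eps: \pi \to S_\bI$ may or may not be surjective (which is precisely what distinguishes cases (a) and (b)) is where the non-faithful-versus-faithful dichotomy gets translated into the geometry.
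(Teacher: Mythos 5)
Your proposal correctly identifies the classification of tensor ideals as essential input and correctly identifies the dichotomy between the two cases (kernel zero versus kernel equal to negligibles, which is equivalent to the paper's dichotomy $\cat{F}(\Delta)\ne 0$ versus $\cat{F}(\Delta)=0$ by Corollary \ref{DeltaGen}). However, the faithful case contains a genuine gap that is, in fact, the central difficulty of the paper.

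You assert that ``a faithful exact-on-the-nose tensor functor out of $\uRep(S_d)$ into an abelian tensor category should factor through'' the abelian envelope $\uRep^{ab}(S_d)$. This is precisely the \emph{extension property} that the paper spends most of its length establishing; it is not a consequence of some ambient theory of abelian envelopes. Note first that $\uRep(S_d)$ is Karoubian but not abelian, so there is no a priori notion of exactness for functors out of it; the real problem is to show that a tensor functor with $\cat{F}(\Delta)\ne 0$ extends to an \emph{exact} tensor functor out of a suitable pre-Tannakian category containing $\uRep(S_d)$. The paper solves this by constructing a pre-Tannakian category $\K_d^0$ as the heart of a $t$-structure on $K^b(\uRep(S_d))$ (Theorem \ref{tstrth}), where the $t$-structure is defined via the functor $\Delta\otimes -$ and Deligne's Lemma \ref{Delemma}, and where the $t$-structure axioms themselves are verified by a block-by-block comparison with the Temperley--Lieb category. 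Once this is done, the extension property (Proposition \ref{ext}) is an easy consequence of exactness of $-\otimes\cat{F}(\Delta)$. Nothing in Deligne's definition of $\uRep^{ab}(S_d)$ via $\Rep(S_\bI,\eps)$ for $\T=\uRep(S_{-1})$ (see \S\ref{222}) grants such an extension property for free; indeed the paper footnotes an example of a Karoubian symmetric tensor category admitting no tensor functor to an abelian one at all, so this is not a formal matter. The identification of $\K_d^0$ with $\uRep^{ab}(S_d)$ is itself a theorem (Corollary \ref{abD}), obtained only after proving the universal property of $\K_d^0$.

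The second part of your argument --- that the extended functor is an equivalence onto $\Rep(S_\bI,\eps)$ because the latter is ``generated by $T$'' --- is also more delicate than stated; the paper handles both cases by invoking Deligne's fundamental-group machinery (\cite[Th\'eor\`eme 8.17]{Del90}), together with the computations that the fundamental group of $\Rep(S_d)$ is $S_d$ with the conjugation action (Proposition \ref{fgsd}) and that $\eps:\pi\to S_\bI$ is an isomorphism for $\K_d^0$ (Proposition \ref{fgusd}). Your heuristic about generation is in the right spirit, but the paper's precise argument also needs the key fact that the forgetful functor $\Rep(S_\bI,\eps)\to\T$ used to induce $\cat{F}_T$ from $\cat{F}$ is exact, and that the comparison of the two fundamental groups actually goes through. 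In short: your dichotomy and use of the ideal classification are sound, but the construction of $\K_d^0$ with its extension property --- the $t$-structure, Deligne's lemma, the block-by-block verification --- is entirely absent, and it is the core of the proof.
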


\begin{remark} We note that a similar (and easier) statement holds true for $t\not \in \BZ_{\ge 0}$,
see \cite[Corollary B2]{Del07}.
\end{remark}

\subsection{} 
The forgetful functor $\Rep(S_\bI,\eps)\to \T$ above is an exact braided tensor functor.
Thus Theorem \ref{main} implies that for a pre-Tannakian category $\T$ 
a braided tensor functor $\cat{F}: \uRep(S_d)\to \T$ either factorizes through 
$\uRep(S_d)\to \Rep(S_d)$ or
extends to an exact tensor functor $\uRep^{ab}(S_d)\to \T$. A crucial step in our proof of 
Theorem \ref{main} is a construction
of pre-Tannakian category $\K_d^0$ and fully faithful embedding $\uRep(S_d)\subset \K_d^0$ such that
we have the following {\em extension property}: 
a tensor (not necessarily braided) functor $\uRep(S_d)\to \T$ either factorizes through 
$\uRep(S_d)\to \Rep(S_d)$ or 
extends to an exact tensor functor $\K_d^0\to \T$, see \S \ref{extension}. 
Then we use general properties of the fundamental
groups from \cite[\S 8]{Del90} in order to prove that $\K_d^0$ satisfies the universal property as in 
Theorem \ref{main} and, in fact, is equivalent to $\uRep^{ab}(S_d)$.

The following analogy plays a significant role in the proof of Theorem \ref{main}.
Let $TL(q)$ be the {\em Temperley-Lieb category}, see e.g.~\cite[\S A1]{GW}. Assume that $q$ is a nontrivial root of unity.
It is well known that the
category $TL(q)$ is tensor equivalent to the category of {\em tilting modules} over  quantum $SL(2)$,
see e.g.~\cite[proof of Theorem 2.4]{O}. Thus $TL(q)$ is a Karoubian tensor category (braided but not symmetric) endowed with
a fully faithful functor to the abelian tensor category $\C_q$ of finite dimensional representations of 
quantum $SL(2)$.
On the other hand there exists a well known semisimple tensor category $\bar \C_q$ and a full tensor functor
$TL(q)\twoheadrightarrow \bar \C_q$, see e.g.~\cite[\S 4]{A}. We consider the diagram $\bar \C_q\twoheadleftarrow TL(q)\subset \C_q$
as a counterpart of the diagram $\Rep(S_d)\twoheadleftarrow \uRep(S_d)\subset \uRep^{ab}(S_d)$.

The main technical result of \cite{O} states that tensor functors $TL(q)\to \D$ to certain abelian tensor categories $\D$ factorize
either through $TL(q)\to \bar \C_q$ or through $TL(q)\subset \C_q$ (see \cite[\S 2.6]{O}) 
which is reminiscent of the extension property of the category $\K_d^0$ above, see also \cite[Remark 2.10]{O}.
Thus in the construction of $\K_d^0$ we follow the strategy from \cite{O} with crucial use of information from \cite{CO}. 
Namely, we find $\K_d^0$ inside the homotopy category of $\uRep(S_d)$ as a heart of a suitable {\em $t-$structure} 
(see \S \ref{deft}). The definition of the $t-$structure is based on Lemma \ref{Delemma} (due to P.~Deligne) 
and almost immediately implies the extension property of the category $\K_d^0$ mentioned above. 
However, the verification of the axioms of a $t-$structure is quite nontrivial. 
To do this we use a decomposition of the category $\uRep(S_d)$
into {\em blocks} described in \cite[Theorem 5.3]{CO}. We provide a blockwise description of the $t-$structure above in \S \ref{tblock}. 
We then observe that the description above coincides with the description of a well known $t-$structure on the blocks
of the Temperley-Lieb category.

\subsection{Acknowledgments} This paper owes its existence to Pierre Deligne who explained a proof
of Lemma \ref{Delemma}, which is crucial to this paper, to the second named author when he was visiting Institute for Advanced Study. 
Both authors are happy to express their deep gratitude to him and to the Institute for Advanced Study which made
this interaction possible. The authors are also very grateful to Alexander Kleshchev who initiated this project.
We also thank Michael Finkelberg and Friedrich Knop for their
interest in this work and Darij Grinberg for his detailed comments. The work of the second named author was partially supported by the NSF grant DMS-0602263.  

\section{Preliminaries}
\subsection{Tensor categories terminology} \label{preT}
In this paper a {\em tensor (or monoidal) category} is a category with a tensor product functor endowed with 
an associativity constraint and a unit object $\be$, see e.g.~\cite[Definition 1.1.7]{BK}. Recall that a tensor category
is called {\em rigid} if any object admits both a left and right dual, see \cite[Definition 2.1.1]{BK}.
A {\em braided tensor category} is a tensor category equipped with a braiding, see \cite[Definition 1.2.3]{BK}.
A {\em symmetric tensor category} is a braided tensor category such that the square
of the braiding is the identity.

Recall that $F$ is a fixed field of characteristic zero. All categories and functors considered 
in this paper are going to be $F-$linear. So,
an {\em $F-$linear tensor category} (or {\em tensor category over $F$}) 
is a tensor category which is $F-$linear 
(but not necessarily additive) and such that the tensor product functor is $F-$bilinear. 
A {\em Karoubian tensor category} over $F$ is an $F-$linear tensor category which is Karoubian as an
$F-$linear category (i.e. it is additive and every idempotent endomorphism is a projection
to a direct summand).  A \emph{tensor ideal} $\cat{I}$ in a tensor category $\cat{T}$ consists of subspaces $\cat{I}(X,Y)\subset\Hom_\cat{T}(X,Y)$ for every $X,Y\in\cat{T}$ such that (i) $h\circ g\circ f\in\cat{I}(X, W)$ whenever $f\in\Hom_\cat{T}(X, Y), g\in\cat{I}(Y,Z), h\in\Hom_\cat{T}(Z,W)$, and (ii) $f\otimes\id_Z\in\cat{I}(X\otimes Z, Y\otimes Z)$ whenever $f\in\cat{I}(X, Y)$.  For example, if the category $\cat{T}$ has a well defined trace the collection 
of  \emph{negligible morphisms}\footnote{
Recall that a morphism $f\in \Hom_\cat{T}(X,Y)$ is negligible if $\Tr(fg)=0$ for any $g\in \Hom_\cat{T}(Y,X)$.  We will call an object negligible if its identity morphism is negligible.} forms a tensor ideal, see \cite[\S A1.3]{GW}.

Finally we say that an $F-$linear symmetric tensor category $\T$ is {\em pre-Tannakian} 
if the following conditions are satisfied:

(a) all Hom's are finite dimensional vector spaces over $F$ and $\End(\be)=F$;

(b) $\T$ is an abelian category and all objects have finite length;

(c) $\T$ is rigid.

\begin{remark} In the terminology of \cite{Del90} a pre-Tannakian category is the same as a
``cat\'egorie tensorielle" (see \cite[\S 2.1]{Del90}) satisfying a finiteness assumption
\cite[2.12.1]{Del90}. This is precisely the class of tensor categories over $F$ for which a {\em fundamental
group} (see \cite[\S 8]{Del90}) is defined.
\end{remark}

\subsection{The category $\uRep(S_t)$} We recall here briefly the construction of the category 
$\uRep(S_t)$ following \cite[\S 2]{CO}. We refer the reader to {\em loc. cit.} and \cite[\S 8]{Del07}
for much more detailed exposition.

\subsubsection{The category $\uRep_0(S_t)$} Let $A$ be a finite set. A {\em partition} $\pi$ of $A$
is a collection of nonempty subsets $\pi_i\subset A$ such that $A=\sqcup_i\pi_i$ (disjoint union); the subsets
$\pi_i$ are called {\em parts} of the partition $\pi$. We say that partition $\pi$ is {\em finer} than
partition $\mu$ of the same set if any part of $\pi$ is a subset of some part of $\mu$.
For three finite sets $A, B, C$ and the partitions
$\pi$ of $A\sqcup B$ and $\mu$ of $B\sqcup C$ we define the partition $\mu \star \pi$ of
$A\sqcup B\sqcup C$ as the finest partition such that parts of $\pi$ and $\mu$ are subsets of
its parts.  The partition $\mu \star \pi$ induces a partition $\mu \cdot \pi$ of $A\sqcup C$
such that parts of $\mu \cdot \pi$ are nonempty intersections of parts of $\mu \star \pi$ with
$A\sqcup C\subset A\sqcup B\sqcup C$; we also define an integer $\ell(\mu,\pi)$ which is 
the number of parts of $\mu \star \pi$ contained in $B$.

\begin{definition} Given $t\in F$, we define the $F-$linear symmetric tensor category $\uRep_0(S_t)$ as follows:

Objects: finite sets; object corresponding to a finite set $A$ is denoted $[A]$.

Morphisms: $\Hom([A],[B])$ is the $F-$linear span of partitions of $A\sqcup B$; composition
of morphisms represented by partitions $\pi \in \Hom([A],[B])$ and $\mu \in \Hom([B],[C])$ 
is $t^{\ell(\mu,\pi)}\mu \cdot \pi \in \Hom([A],[C])$.

Tensor product: disjoint union (see \cite[Definition 2.15]{CO}); unit object is $[\varnothing]$; 
tensor product of morphisms, associativity and commutativity
constraints are the obvious ones (see \cite[\S2.2]{CO}).
\end{definition} 

The category $\uRep_0(S_t)$ has a distinguished object $[pt]$ where $pt$ is a
one-element set. The object $[pt]$ has a natural structure of commutative associative algebra
in $\uRep_0(S_t)$ where the multiplication (resp. unit) map is given  by the partition of $pt \sqcup pt \sqcup pt$ (resp. $pt$)
consisting of one part. It is immediate to check that the object $[pt]$ satisfies conditions (a), (b), (c)
from \S\ref{Tabc}. Moreover, we have the following universal property:

\begin{proposition} \label{uni0}
 Let $\T$ be an $F-$linear symmetric tensor category. The functor from the category of braided tensor functors 
$\cat{F}: \uRep_0(S_t)\to \T$
to the category of objects $T\in \T$ satisfying (a), (b), (c) from \S \ref{Tabc} and their isomorphisms, which sends  $\cat{F}\mapsto \cat{F}([pt])$ and sends natural transformations $(\eta:\cat{F}\to\cat{F'})\mapsto\eta_{[pt]}$ is an equivalence of categories.
\end{proposition}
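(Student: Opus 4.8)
\textbf{Proof proposal for Proposition \ref{uni0}.}

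The plan is to construct an explicit inverse to the functor $\cat{F}\mapsto\cat{F}([pt])$ and check that the two composites are isomorphic to the identity. I would begin by recording what structure the object $[pt]$ carries in $\uRep_0(S_t)$: the algebra multiplication $\mu$ is the partition of $pt\sqcup pt\sqcup pt$ with one part, the unit $1_{[pt]}$ is the partition of $pt$ with one part, and — since partitions of $A\sqcup B$ and $B\sqcup A$ are canonically identified — the object $[pt]$ is canonically self-dual with evaluation and coevaluation given by the one-part partitions of $pt\sqcup pt$. A direct diagram chase shows the resulting $\Tr:[pt]\to[\varnothing]$ is the partition of $pt$ with one part, and composing with $\mu$ recovers the evaluation pairing, which is non-degenerate, so (a), (b), (c) all hold (with $\dim[pt]=t$ coming from the loop value $t$); this is the content of the sentence preceding the statement. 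So the assignment $\cat{F}\mapsto\cat{F}([pt])$ does land in the claimed target category, and functoriality in natural transformations is immediate from naturality.

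Next I would build the inverse. Given $T\in\T$ satisfying (a), (b), (c), define $\cat{F}(T):\uRep_0(S_t)\to\T$ on objects by $[A]\mapsto T^{\otimes A}$ (the tensor power indexed by the finite set $A$, well-defined up to canonical isomorphism because $\T$ is symmetric). On morphisms, a partition $\pi$ of $A\sqcup B$ must be sent to a morphism $T^{\otimes A}\to T^{\otimes B}$ built out of $\mu_T$, $1_T$, the duality morphisms $\ev_T,\coev_T$ (via the isomorphism $T\simeq T^*$ from (b)), and the symmetry: concretely, associate to each part of $\pi$ the composite ``multiply all incoming $T$-legs together, then comultiply'' — more precisely, for a part meeting $A$ in a set $S$ and $B$ in a set $S'$, use $T^{\otimes S}\xrightarrow{\text{iterated }\mu_T}T\xrightarrow{\text{dual of iterated }\mu_T}T^{\otimes S'}$, with legs in $A$ only handled by $\Tr$ composed appropriately and legs in $B$ only by the unit — and tensor these together over the parts, reindexing by the symmetry. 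Then extend $F$-linearly. The key computational lemma is that this assignment respects composition with the factor $t^{\ell(\mu,\pi)}$: each part of $\mu\star\pi$ entirely contained in the middle set $B$ contracts to a closed loop, which evaluates to $\dim(T)=t$ by (c), while the remaining parts assemble into exactly the morphism attached to $\mu\cdot\pi$; this is a standard ``graphical calculus for a Frobenius algebra'' computation. One also checks $\cat{F}(T)$ is strict symmetric monoidal (disjoint union of finite sets goes to tensor product, the symmetry partitions go to the symmetry in $\T$) and that it is braided. By construction $\cat{F}(T)([pt])\cong T$ as algebras with all the extra structure, giving one composite $\cong\id$.

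For the other composite, given a braided tensor functor $\cat{G}:\uRep_0(S_t)\to\T$, set $T:=\cat{G}([pt])$; then $\cat{F}(T)$ and $\cat{G}$ agree on $[pt]$, hence on every $[A]=[pt]^{\sqcup A}$ up to the coherence isomorphisms of $\cat{G}$, and both send every partition to the morphism canonically determined by $\mu_T,1_T,\ev_T,\coev_T$ and the symmetry — for $\cat{G}$ this is forced because every partition of $A\sqcup B$ can be written as a composite of tensor products of the four generating partitions (multiplication, unit, the two cup/cap partitions) and symmetries, and a tensor functor must preserve each of these. Assembling the coherence isomorphisms of $\cat{G}$ into a monoidal natural isomorphism $\cat{F}(\cat{G}([pt]))\xrightarrow{\sim}\cat{G}$ completes the argument, and naturality in morphisms of objects $T$ is routine. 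The main obstacle I anticipate is the bookkeeping in showing that every partition decomposes into the four generators and symmetries (equivalently, that $\uRep_0(S_t)$ is generated as a symmetric monoidal category by $[pt]$ with its Frobenius-algebra morphisms subject only to the Frobenius and commutativity relations) and that the composition formula with $t^{\ell(\mu,\pi)}$ matches the loop-counting; everything else is naturality chasing. This is exactly the kind of partition-algebra combinatorics that \cite[\S 2]{CO} and \cite[\S 8]{Del07} carry out, so I would cite those for the detailed verification.
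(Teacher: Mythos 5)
Your proposal follows essentially the same strategy as the paper's sketch: define the inverse on objects by $[A]\mapsto T^{\ot A}$, decompose partition morphisms into Frobenius-algebra generators (the paper phrases this via tensor products of one-part partitions, you phrase it via generator/symmetry composites, but these are the same idea), verify that the assumptions (a), (b), (c) make this a well-defined braided tensor functor with closed loops evaluating to $t$, and cite \cite[\S 8]{Del07} for the detailed combinatorial bookkeeping. The only difference is that you spell out the loop-counting check and the unit/counit verification in somewhat more detail, which the paper elides entirely; the approach and the key constructions coincide.
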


\noindent\emph{Sketch of proof.} We restrict ourselves by a description of the inverse functor on objects; for more details
see \cite[\S 8]{Del07}. So assume that $T\in \T$ satisfies (a), (b), (c) from \ref{Tabc}. 
We define $\cat{F}([A])=T^{\ot A}$ (here $T^{\ot A}$ is a tensor product of copies of $T$ labeled
by elements of $A$; since the category $\T$ is symmetric this is well defined). The tensor structure
on the functor $\cat{F}$ will be given by the obvious isomorphisms $T^{\ot A\sqcup B}=T^{\ot A}\ot
T^{\ot B}$. It remains to define $\cat{F}$ on the morphisms. Observe that a morphism
from $\Hom([A],[B])$ represented by a partition $\pi$ of $A\sqcup B$ is a tensor product
of morphisms corresponding to partitions with precisely one part $\pi =\ot_i \pi_i$. Thus it
is sufficient to define $\cat{F}(\pi)$ only for $\pi$ consisting of one part $A\sqcup B$. In this
case we set $\cat{F}(\pi)=T^{\ot A}\to T\to T^{\ot B}$ where the first map is the multiplication 
morphism $T^{\ot A}\to T$ and the second one is the dual to the multiplication morphism 
$T^{\ot B}\to T$ where $T$ and $T^*$ are identified via (b) from \S \ref{Tabc}. One verifies
that the assumptions (a), (b), (c) from \S \ref{Tabc} ensure that the tensor functor $\cat{F}$ is well
defined. \hfill$\square$

\subsubsection{The categories $\uRep(S_t)$ and $\uRep^{ab}(S_d)$}\label{222}
\begin{definition} (cf. \cite[D\'efinition 2.17]{Del07} or \cite[Definition 2.19]{CO})
The category $\uRep(S_t)$ is the Karoubian (or pseudo-abelian) envelope\footnote{we refer
the reader to \cite[\S 1.7-1.8]{Del07} for the discussion of this notion.}
 of the category $\uRep_0(S_t)$. 
\end{definition}

It follows immediately from Proposition \ref{uni0} that the
category $\uRep(S_t)$ has universal property from Proposition \ref{uRepuni}. We now use
this universal property to construct Deligne's category $\uRep^{ab}(S_d)$ from
the introduction.

It is known (see \cite[Th\'eor\`eme 2.18]{Del07} or \cite[Corollary 5.21]{CO}) 
that the category $\uRep(S_t)$ is semisimple
(and hence pre-Tannakian) for $t\not \in \BZ_{\ge 0}$. In particular, the category
$\uRep(S_{-1})$ is pre-Tannakian, so its fundamental group $\pi$ is defined.
For any $d\in \BZ_{\ge 0}$ we can consider the
commutative associative algebra with non-degenerate trace pairing $T_d\in \uRep(S_{-1})$ which is a direct sum of $[pt]$ and
$d+1$ copies of the algebra $\be=[\varnothing]$. Clearly, $\dim(T_d)=d$, so we can use Proposition
\ref{uRepuni} to construct a symmetric tensor functor $\uRep(S_d)\to \uRep(S_{-1})$. 
Using the general properties of the fundamental group we get a factorization
of this functor as $\uRep(S_d)\to \Rep(S_\bI,\eps)\to \uRep(S_{-1})$ (here $\bI =\Spec(T_d)$
and $\eps :\pi \to S_\bI$ is the canonical homomorphism). It is clear that the category
$\Rep(S_\bI,\eps)$ is pre-Tannakian; it is proved in \cite[Proposition 8.19]{Del07}
that the functor $\uRep(S_d)\to \Rep(S_\bI,\eps)$ is fully faithful. We set $\uRep^{ab}(S_d):=
\Rep(S_\bI,\eps)$; as explained above this is a pre-Tannakian category and we have 
a fully faithful braided tensor functor $\uRep(S_d)\to \uRep^{ab}(S_d)$.

\begin{remark}\label{tensornonzero} The existence of the embedding $\uRep(S_t)\subset \uRep^{ab}(S_t)$ implies
that $Y_1\ot Y_2\ne 0$ for nonzero objects $Y_1, Y_2\in \uRep(S_t)$ (this is true in
any abelian rigid tensor category with simple unit object). The same result can be proved directly as follows.
Given finite sets $A$ and $B$, it follows from the definition of tensor products that the obvious map 
$\End([A])\ot \End([B])\to \End([A]\ot [B])=\End([A\sqcup B])$ is injective.  Since any indecomposable object of $\uRep(S_t)$ is the image of a primitive idempotent 
$e\in \End([A])$ for some finite set $A$, see e.g.~\cite[Proposition 2.20]{CO}, it follows that the tensor product of two nonzero morphisms in $\uRep(S_t)$ is nonzero.  The statement for objects follows by considering their identity morphisms.
\end{remark}

\subsubsection{Indecomposable objects of the category $\uRep(S_t)$}
The indecomposable objects of the category $\uRep(S_t)$ are classified up to
isomorphism in \cite[Theorem 3.3]{CO}.
The isomorphism classes are labeled by the Young diagrams of all sizes in the following
way. Let $\lambda$ be a Young diagram of size $n=|\lambda|$ and let 
$y_\lambda$ be the corresponding primitive idempotent in $FS_n$, the group algebra of the symmetric group\footnote{Here $y_\lambda$ is a scalar multiple of the so-called Young symmetrizer (see for instance \cite{FH}).}.  The symmetric braiding gives rise to an action of $S_n$ on $[pt]^{\otimes n}$; let $[pt]^\lambda$ denote the image of $y_\lambda\in\End([pt]^{\otimes n})$.
%
%
%$E_\lambda$ be the corresponding irreducible representation of the symmetric group $S_l$.  We have an obvious action of the symmetric group $S_l$ on $X^{\ot l}\in \uRep(S_t)$; we set $X^\lambda :=(X^{\ot l}\ot E_\lambda)^{S_l}$ (here $S_l$ acts on $X^{\ot l}\ot E_\lambda$ diagonally and for any object $Y\in \uRep(S_t)$ with $S_l-$action we set $Y^{S_l}$ to be the image of the idempotent $e=\frac1{l!}\sum_{w\in S_l}w\in \End(Y)$).  
%
For any Young diagram $\lambda$ of size $|\lambda|$ there is a unique indecomposable object $L(\lambda)\in \uRep(S_t)$ characterized by the following properties:

(a) $L(\lambda)$ is not a direct summand of $[pt]^{\ot k}$ for $k<|\lambda|$;

(b) $L(\lambda)$ is a direct summand (with multiplicity 1) of $[pt]^\lambda$.

It is proved in \cite[Theorem 3.3]{CO} that the indecomposable objects $L(\lambda)$ 
are well defined up to isomorphism, and any indecomposable object of $\uRep(S_t)$ is isomorphic to
precisely one $L(\lambda)$.

\subsubsection{Blocks of the category $\uRep(S_t)$}\label{StBlocks}
Let $\A$ be a Karoubian category such that any object decomposes into a finite direct sum
of indecomposable objects. The set of isomorphism classes of indecomposable objects
of $\A$ splits into {\em blocks} which are equivalence classes of the weakest equivalence 
relation for which two indecomposable objects are equivalent whenever there exists a 
nonzero morphism between them. We will also use the term block to refer to a full
subcategory of $\A$ generated by the indecomposable objects in a single block.

The main result of \cite{CO} is the description of blocks of the category $\uRep(S_t)$. We 
describe the results of {\em loc. cit.} here. We will represent a Young diagram $\lambda$ as 
an infinite non-increasing sequence $(\lambda_1,\lambda_2,\ldots)$ of nonnegative integers such that
$\lambda_k=0$ for some $k>0$, see \cite[\S 1.1]{CO}. For a Young diagram $\lambda$ and $t\in F$
we define a sequence $\mu_\lambda (t)=(t-|\lambda|, \lambda_1-1, \lambda_2-2,\ldots)$.

\begin{theorem} \label{CObl} {\em (\cite[Theorem 5.3]{CO})} 
The objects $L(\lambda)$ and $L(\lambda')$ of $\uRep(S_t)$
are in the same block if and only if $\mu_\lambda(t)$ is a permutation of $\mu_{\lambda'}(t)$.
\end{theorem}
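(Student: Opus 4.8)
\textbf{Proof proposal for Theorem \ref{CObl}.}

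The plan is to translate the combinatorial condition into a statement about composition factors of cell/standard objects, using the realization of $\uRep(S_t)$ via the partition algebra and its highest-weight structure. First I would recall that the endomorphism algebras $\End([pt]^{\ot n}) = P_n(t)$ are the partition algebras, and for generic $t$ they are semisimple with simple modules labeled by Young diagrams of size $\le n$; for special $t = d \in \BZ_{\ge 0}$ they are cellular, so one has standard (cell) modules $\Delta_n(\lambda)$ whose decomposition matrix governs the block structure. The indecomposable object $L(\lambda)$ corresponds to the projective cover of the simple head of the appropriate cell module, and two indecomposables $L(\lambda), L(\lambda')$ lie in the same block of $\uRep(S_t)$ precisely when the corresponding simples are linked through a chain of standard modules having common composition factors — this is the standard ``linkage graph'' reformulation of blocks in a cellular (or highest-weight) setting.

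The second step is to compute this linkage relation explicitly. Here I expect the key input to be a branching rule: restricting $[pt]^{\ot n}$ to $[pt]^{\ot(n-1)}$ adds or removes a box (as for the partition algebra tower), so a nonzero morphism $L(\lambda) \to L(\lambda')$ forces $\lambda$ and $\lambda'$ to be connected by moves that change the diagram in a controlled way. One then checks that the sequence $\mu_\lambda(t) = (t - |\lambda|, \lambda_1 - 1, \lambda_2 - 2, \ldots)$ is the natural ``$\rho$-shifted'' weight attached to $\lambda$ at parameter $t$: the first entry encodes the $[pt]$-part, the remaining entries are the usual contents-shift $\lambda_i - i$ that controls linkage for symmetric groups. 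The claim that $L(\lambda)$ and $L(\lambda')$ are linked iff $\mu_\lambda(t)$ and $\mu_{\lambda'}(t)$ differ by a permutation is then exactly the assertion that the only nontrivial linkages are generated by transposing two adjacent entries of $\mu$ that are ``out of order'' — i.e. by the elementary Ext-or-Hom computations between neighboring standard modules. I would prove the ``only if'' direction by showing directly that if $\mu_\lambda(t)$ is not a permutation of $\mu_{\lambda'}(t)$ then every $\Hom$ and indeed every composition-factor-overlap vanishes (generic-$t$ semisimplicity plus a degeneration/specialization argument), and the ``if'' direction by exhibiting an explicit chain of nonzero morphisms realizing each adjacent transposition of entries of $\mu$.

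The main obstacle, I expect, is the ``if'' direction: producing genuinely nonzero morphisms between the indecomposables $L(\lambda)$ and $L(\lambda')$ whenever $\mu_\lambda(t)$ and $\mu_{\lambda'}(t)$ are related by a single transposition. This is where one must go beyond formal linkage-graph bookkeeping and actually construct maps — e.g. by writing down explicit partitions in $\Hom([A],[B])$, or by analyzing self-extensions of cell modules for $P_n(t)$ at the degenerate parameter and showing the corresponding extension is non-split. A clean way to do this is to import the detailed $\Hom$- and $\Ext$-computations that are already available for the partition algebra tower (the analogue of the Martin--Woodcock or James-type linkage analysis), and to check that the combinatorial ``abacus'' bookkeeping built into $\mu_\lambda(t)$ matches the known block combinatorics there. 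Once both directions are in place, the equivalence relation generated by ``there is a nonzero morphism'' is exactly ``$\mu_\lambda(t)$ and $\mu_{\lambda'}(t)$ are permutations of one another,'' which is the statement of the theorem.
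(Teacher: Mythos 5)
This theorem is not proved in the paper at hand: it is imported verbatim from Comes--Ostrik \cite[Theorem 5.3]{CO}, so there is no in-paper proof to compare against. Judged against the proof in \cite{CO}, your proposal is compatible in spirit (the $\rho$-shifted-content reading of $\mu_\lambda(t)$, linkage via chains of nonzero $\Hom$'s, the role of the $-\otimes[pt]$ branching are all the right heuristics), but it is a genuinely different route. You propose to transport the problem to the cellular/standard-module framework for the partition algebras $P_n(t)$ and then import Martin--Woodcock-type computations of their block structure, whereas \cite{CO} works intrinsically inside $\uRep(S_t)$, building and analysing morphisms between the $L(\lambda)$'s directly from the partition-diagram calculus and an induction on $|\lambda|$ via the decomposition of $L(\lambda)\otimes[pt]$. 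Your route is a valid alternative if executed carefully, but it requires an explicit bridge: $L(\lambda)$ lives in the Karoubi envelope (it is the ``new'' indecomposable summand at level $|\lambda|$), and the identification between ``same block in $\uRep(S_t)$'' and ``same block in $P_n(t)$ for $n$ large'' must be argued, not merely asserted, since such an identification involves taking $n\to\infty$ along the tower.

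The substantive gap is the one you flag yourself: the ``if'' direction requires producing actual nonzero morphisms $L(\lambda)\to L(\lambda')$ whenever $\mu_\lambda(t)$ and $\mu_{\lambda'}(t)$ differ by an adjacent transposition, and your proposal defers this to ``importing'' partition-algebra $\Hom$/$\Ext$ computations without specifying which results are being invoked or checking that they survive the passage to the Karoubi envelope. As written this is a plan rather than a proof. The ``only if'' direction also needs more care than ``generic-$t$ semisimplicity plus a degeneration argument'': the primitive idempotents cutting out $L(\lambda)$ do not specialize naively at integer $t$ (some acquire poles, and the summand structure jumps), so a specialization argument must be phrased against a $t$-independent spanning set such as the partition-diagram basis — which is essentially why \cite{CO} sets up the indecomposables $L(\lambda)$ and their interaction with $-\otimes[pt]$ so carefully before attacking blocks.
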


Let $\B$ be the set of blocks of the category $\uRep(S_t)$; for any $\mathsf{b}\in \B$ let us denote
by $\uRep_{\mathsf{b}}(S_t)$ the corresponding subcategory of $\uRep(S_t)$; we have a decomposition 
$\uRep(S_t)=\oplus_{\mathsf{b}\in \B}\uRep_{\mathsf{b}}(S_t)$. 

\begin{proposition}\label{StBlockProp} Let $\mathsf{b}\in \B$. One of the following holds:

(i) $\mathsf{b}$ is semisimple (or trivial): the category $\uRep_{\mathsf{b}}(S_t)$ is equivalent to the category 
$\Vec_F$ of finite dimensional $F-$vector spaces as an additive category. We will denote
by $L=L(\mathsf{b})$ the unique indecomposable object of this block. Then  $\dim(L)=0$, or, equivalently, 
$\id_L$ is negligible.

(ii) $\mathsf{b}$ is non-semisimple (or infinite). In this case the additive category $\uRep_{\mathsf{b}}(S_t)$ is described
in \cite[\S6]{CO} (in particular, it does not depend on a choice of non-semisimple block $\mathsf{b}$). There is
a natural labeling of indecomposable objects of the category $\uRep_{\mathsf{b}}(S_t)$ by nonnegative
integers; we will denote these objects by $L_0, L_1, \ldots$.  Then $\dim(L_i)=0$ for $i>0$
and $\dim(L_0)\ne 0$, that is  $\id_{L_i}$ is negligible if and only if $i>0$.
\end{proposition}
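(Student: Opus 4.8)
The plan is to reduce everything to a computation of dimensions of the indecomposable objects $L(\lambda)$, using the block combinatorics of Theorem \ref{CObl}. First I would treat the semisimple/trivial blocks. By Theorem \ref{CObl}, a block $\mathsf{b}$ is a singleton exactly when $\mu_\lambda(t)$ has all entries distinct and, moreover, no other Young diagram $\lambda'$ produces a permutation of it; the known description (from \cite{CO}) is that these are precisely the $\lambda$ for which $\mu_\lambda(t)$ has pairwise distinct entries. For such $\lambda$ one knows $\End_{\uRep(S_t)}(L(\lambda)) = F$ and there are no nonzero morphisms to or from any other indecomposable, so $\uRep_{\mathsf{b}}(S_t)\simeq \Vec_F$ as an additive category. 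For the dimension claim: if $t\notin\BZ_{\ge 0}$ the whole category $\uRep(S_t)$ is semisimple and is equivalent, after quotienting nothing, to a genuine Tannakian-type category in which $\dim$ is a nonnegative integer; but here we want $\dim(L)=0$, which fails in that case — so I must be careful that the statement is implicitly about $t=d\in\BZ_{\ge 0}$, where $\uRep(S_d)\to\Rep(S_d)$ kills exactly the negligible morphisms (\S\ref{Tabc}). Thus I would argue: an indecomposable $L(\lambda)$ in a trivial block cannot survive in $\Rep(S_d)$ under the functor $\uRep(S_d)\to\Rep(S_d)$ — because in $\Rep(S_d)$ the indecomposables are the $L(\lambda)$ with $|\lambda|\le d$ lying in the "single-hook-removed" family, and a short combinatorial check with $\mu_\lambda(d)$ shows the trivial-block diagrams are exactly those not in that family — hence $\id_{L(\lambda)}$ is negligible, i.e. $\dim L(\lambda)=0$.

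For the non-semisimple blocks, I would invoke the explicit description of $\uRep_{\mathsf{b}}(S_t)$ from \cite[\S6]{CO}: such a block contains a totally ordered chain of diagrams $\lambda^{(0)},\lambda^{(1)},\dots$ with $\mu_{\lambda^{(i)}}(t)$ all permutations of one another (obtained from $\mu_{\lambda^{(0)}}(t)$ by successively transposing a coincident pair and shifting), with $\Hom$-spaces between consecutive $L_i=L(\lambda^{(i)})$ one-dimensional and the block equivalent as an additive category to (the additive hull of) the path category of an infinite $A_\infty$-type quiver — independent of which non-semisimple block one picks, as asserted. It remains to compute $\dim L_i$. The cleanest route is again via the functor $\uRep(S_d)\to\Rep(S_d)$ for $t=d$: exactly one object in each non-semisimple block of $\uRep(S_d)$ has nonzero image in $\Rep(S_d)$ — namely the unique $L(\lambda)$ with $|\lambda|\le d$ in that block, which by the indexing of \cite{CO} is $L_0$ — while all the $L_i$, $i>0$, are sent to $0$. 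Since the kernel of this functor on morphisms is precisely the ideal of negligibles, $\id_{L_i}$ negligible $\iff$ $L_i\mapsto 0$ $\iff$ $i>0$; and $\dim(L_0)\ne 0$ because its image is a nonzero object of $\Rep(S_d)$, whose categorical dimension is a positive integer equal to $\dim L_0$ (dimension is preserved by tensor functors).

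The main obstacle, and the step I would spend the most care on, is the bookkeeping that identifies, inside each block, which $L(\lambda)$ is the one with nonzero image in $\Rep(S_d)$ and matches it with the label $0$ from the \cite{CO} parametrization — i.e. pinning down that the surviving diagram is the minimal one in the chain and that everything above it dies. This is a finite combinatorial argument with the sequences $\mu_\lambda(d)=(d-|\lambda|,\lambda_1-1,\lambda_2-2,\dots)$: within a block all these are permutations of a fixed sequence $x$, the condition "$|\lambda|\le d$ and $\lambda$ indexes a nonzero object of $\Rep(S_d)$" forces $d-|\lambda|$ to be the largest entry (so $\lambda$ is as small as possible), and one checks there is exactly one such $\lambda$ per block — this is essentially \cite[Th\'eor\`eme 6.2]{Del07} combined with Theorem \ref{CObl}. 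Granting that, both dimension statements follow formally from the negligibility characterization of the kernel of $\uRep(S_d)\to\Rep(S_d)$. For $t\notin\BZ_{\ge0}$ there are no non-semisimple blocks, so only case (i) can occur and its "$\dim L=0$" assertion is then vacuous/irrelevant; I would add a remark that the dimension statements are meaningful precisely for $t=d\in\BZ_{\ge0}$, which is the case used in the sequel.
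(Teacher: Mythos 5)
The paper gives no proof of this proposition: it is recalled verbatim from \cite[\S5--6]{CO} (the block structure, the labeling $L_0,L_1,\ldots$, and the dimension statements are all cited as known), so there is no in-paper argument to compare against. Your overall strategy is nonetheless the natural one and matches how these facts are established in \cite{CO}: use the functor $\cat{F}:\uRep(S_d)\to\Rep(S_d)$, whose kernel on morphisms is exactly the ideal of negligible morphisms, so that for an indecomposable $L$ one has $\id_L$ negligible if and only if $\cat{F}(L)=0$, and (since tensor functors preserve categorical dimension and a nonzero object of $\Rep(S_d)$ has a positive integer as its dimension) this is equivalent to $\dim(L)=0$. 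You are also right to flag that the assertion $\dim(L)=0$ in part (i) is only meaningful for $t=d\in\BZ_{\ge0}$; for $t\notin\BZ_{\ge0}$ all blocks are trivial and the indecomposables have generically nonzero dimension.

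However, your identification of which object survives in a non-semisimple block contains a concrete error. You assert that $L_0$ is ``the unique $L(\lambda)$ with $|\lambda|\le d$ in that block,'' but this is false: at $d=2$, the objects $L((1))$ and $L((2))$ lie in a common non-semisimple block, since $\mu_{(1)}(2)=(1,0,-2,-3,\ldots)$ and $\mu_{(2)}(2)=(0,1,-2,-3,\ldots)$ are permutations of one another, and both $(1)$ and $(2)$ have size $\le 2$ --- yet only $L((1))$ has nonzero image in $\Rep(S_2)$. The correct criterion is the strictly stronger condition $\lambda_1+|\lambda|\le d$ (equivalently, the first entry $d-|\lambda|$ of $\mu_\lambda(d)$ is its maximal entry), which is what singles out the $\prec$-minimal object in the block in the sense of \cite[\S5]{CO}. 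You do flag the combinatorial bookkeeping as the main obstacle, but the condition you substitute for it is not a loose paraphrase of the right one: as written it would misidentify $L_0$ in blocks like the one above, so the argument does not go through without replacing it.
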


Further, it is shown in \cite{CO} that for any $t\in F$ there are infinitely many semisimple blocks
and finitely many (precisely the number of Young diagrams of size $t$) non-semisimple blocks.  In particular,  for $t\not \in \BZ_{\ge 0}$ all blocks are
semisimple (hence the category $\uRep(S_t)$ is semisimple). 

\subsection{Temperley-Lieb category} \label{TLcat}
The results on the category $\uRep(S_t)$ in many respects
are parallel to the results on the Temperley-Lieb category $TL(q)$. We recall the definition
and some properties of this category here.

\begin{definition} \label{TLdef} (see e.g.~\cite[\S A1.2]{GW})
%Let $q\in F^\times$. 
Let $q$ be a nonzero element of an algebraic closure of $F$ such that $q+q^{-1}\in F$.
We define the $F-$linear tensor category $TL_0(q)$ as follows:

Objects: finite subsets of $\BR$ considered up to isotopy; we will denote the object corresponding
to the set $A$ by $\langle A\rangle$.

Morphisms: $\Hom(\langle A\rangle ,\langle B\rangle )$ is the $F-$linear span of 
 one dimensional submanifolds of $\BR \times [0,1]$ 
with boundary $A\sqcup B$ where $A\subset \BR \times 0$ and $B\subset \BR \times 1$
(such submanifolds are called embedded unoriented {\em bordisms} from $A$ to $B$) 
modulo the relation $[bordism \sqcup circle]=(q+q^{-1})[bordism]$;  composition is given
by juxtaposition.  

Tensor product: disjoint union (write $\BR =\BR_{<0}\sqcup 0 \sqcup \BR_{>0}$ and identify
$\BR_{<0}$ and $\BR_{>0}$ with $\BR$); the unit object is $\langle \varnothing\rangle$; 
tensor product of morphisms and associativity constraint are the obvious ones.
\end{definition}

Next we define the category $TL(q)$ as the Karoubian envelope of the category $TL_0(q)$.
The category $TL(q)$ has a universal property (see e.g.~\cite[Theorem 2.4]{O}) but we don't need it here.
The indecomposable objects of the category $TL(q)$ are labeled by nonnegative integers:
for any $i\in \BZ_{\ge 0}$ there is a unique indecomposable object $V_i$ which is
a direct summand (with multiplicity 1) of $\langle pt\rangle^{\otimes i}$ but is not a direct summand of 
$\langle pt\rangle^{\otimes k}$ whenever $k<i$.

The category $TL(q)$ is semisimple for generic values of $q$; more precisely the category
$TL(q)$ is not semisimple precisely when exists a positive integer $l$ such that
$1+q^2+\ldots +q^{2l}=0$ (we will denote the smallest such integer by $l_q$). Assume
that the category $TL(q)$ is not semisimple. Then we have a full tensor functor $TL(q)\to \bar \C_q$
and a fully faithful tensor functor $TL(q)\to \C_q$ where $\bar \C_q$ is a semisimple tensor category
(sometimes called the ``Verlinde category") and $\C_q$ is the abelian tensor category of 
finite dimensional representations of quantum $SL(2)$, see e.g.~\cite[Theorem 2.4]{O}.

The blocks of the category $TL(q)$ are well known. Similarly to the case of the category $\uRep(S_d)$
there are infinitely many semisimple blocks (which are equivalent to the category $\Vec_F$ as an 
additive category) and finitely many (precisely $l_q$) non-semisimple blocks. The following
observation is very important for this paper:

\begin{proposition}\label{TLblocks} {\em (\cite[Remark 6.5]{CO})} All non-semisimple blocks of the category $TL(q)$ are equivalent
as additive categories. Moreover, they are equivalent to the category $\uRep_{\mathsf{b}}(S_d)$ where
$\mathsf{b}$ is any non-semisimple block of the category $\uRep(S_d)$.
\end{proposition}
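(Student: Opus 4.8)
The plan is to identify a non-semisimple block on each of the two sides with one and the same $F$-linear Karoubian category $\mathcal{A}$, relying on the $\uRep(S_d)$ side on the computation already carried out in \cite[\S6]{CO}. Recall the relevant general principle: an $F$-linear Karoubian category in which every object is a finite direct sum of indecomposables and all $\Hom$-spaces are finite dimensional is determined, block by block and up to $F$-linear additive equivalence, by its quiver-with-relations --- the quiver whose vertices are the isomorphism classes of indecomposable objects, whose arrows form bases of the radicals of the $\Hom$-spaces modulo their squares, and whose relations express longer paths through shorter ones. Thus it suffices to compute this datum for a non-semisimple block of $TL(q)$ and of $\uRep(S_d)$ and to observe that the two coincide.

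On the $\uRep(S_d)$ side this is the content of \cite[\S6]{CO}: for a non-semisimple block $\mathsf{b}$ with indecomposable objects $L_0,L_1,L_2,\dots$ labelled as in Proposition~\ref{StBlockProp}(ii) one has $\End(L_0)=F$, $\End(L_i)\cong F[x]/(x^2)$ for $i\ge 1$, $\dim\Hom(L_i,L_{i+1})=\dim\Hom(L_{i+1},L_i)=1$ for all $i\ge 0$, $\Hom(L_i,L_j)=0$ for $|i-j|\ge 2$, the length-two composite $L_0\to L_1\to L_0$ is zero, and for $i\ge 1$ each of the two length-two composites $L_i\to L_{i\pm 1}\to L_i$ equals, up to a nonzero scalar, the nilpotent generator of $\End(L_i)$. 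Equivalently, $\uRep_{\mathsf{b}}(S_d)$ is $F$-linearly equivalent to the Karoubian envelope $\mathcal{A}$ of the $F$-linear category generated by the half-infinite quiver $0\rightleftarrows 1\rightleftarrows 2\rightleftarrows\cdots$ modulo the relations that two consecutive arrows pointing the same way compose to zero, that the two length-two loops at each vertex $i\ge 1$ coincide, and that the length-two loop at the vertex $0$ is zero. In particular this is independent of the choice of $\mathsf{b}$.

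On the $TL(q)$ side I would work through the equivalence of $TL(q)$ with the category of tilting modules over quantum $SL(2)$ at $q$, recalled in \S\ref{TLcat}. A non-semisimple block corresponds there to a linkage class for the affine Weyl group of $SL(2)$: it contains Weyl modules $\Delta(n_0),\Delta(n_1),\Delta(n_2),\dots$ with $n_0<n_1<n_2<\cdots$, each $n_i$ obtained from $n_{i-1}$ by a wall reflection, and its indecomposable tilting modules are the simple module $T(n_0)=\Delta(n_0)=L(n_0)$ (of nonzero quantum dimension) and, for $i\ge 1$, the modules $T(n_i)$ with good-filtration sections $\Delta(n_{i-1})$ and $\Delta(n_i)$. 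The identity $\dim\Hom(T(n_i),T(n_j))=\sum_\lambda[T(n_i):\Delta(\lambda)]\,[T(n_j):\nabla(\lambda)]$, together with the rank-one fact that these filtration multiplicities equal $1$ for $\lambda\in\{n_{i-1},n_i\}$ (only $\lambda=n_0$ when $i=0$) and $0$ otherwise, reproduces exactly the $\Hom$-dimensions above; and the explicit submodule structure of the $SL(2)$-tilting modules, together with their self-duality, pins down the composition: a nonzero map out of the simple object $T(n_0)$ lands in the socle --- hence in the radical --- of $T(n_1)$, so the loop $T(n_0)\to T(n_1)\to T(n_0)$ vanishes, while for $i\ge 1$ the two interior loops at $T(n_i)$ are nonzero nilpotents and so, after normalising the arrows, coincide. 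Hence a non-semisimple block of $TL(q)$ is again $F$-linearly equivalent to $\mathcal{A}$, uniformly in the linkage class; comparing with the previous paragraph proves both assertions of the proposition.

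The one genuinely delicate point in each of the two computations is the determination of the \emph{relations} rather than just the $\Hom$-dimensions: the latter alone would leave open a family of possible quiver algebras (a priori the loop at the endpoint could be nonzero, or the two interior loops at a vertex non-proportional), so one must verify that the loop at the simple endpoint really vanishes and that the interior loops are proportional. On the $\uRep(S_d)$ side this verification is \cite[\S6]{CO}, carried out with the explicit idempotents and morphisms of $\uRep(S_d)$; on the $TL(q)$ side it follows from the self-duality and submodule structure of $SL(2)$-tilting modules as above, or, avoiding quantum groups, from a direct computation with Jones--Wenzl idempotents and the cellular (Graham--Lehrer) structure of the Temperley--Lieb algebras. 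This is essentially the argument behind \cite[Remark~6.5]{CO}.
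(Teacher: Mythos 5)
The paper gives no proof of this proposition, only the citation to \cite[Remark 6.5]{CO}; your argument is a faithful reconstruction of what lies behind that remark, matching a non-semisimple block on each side against the same half-infinite quiver with zigzag-type relations, using \cite[\S6]{CO} for the $\uRep(S_d)$ block and the tilting-module picture for quantum $SL(2)$ for the $TL(q)$ block. The one place you are sketchier than a self-contained account would need to be is the verification that the two interior loops at $T(n_i)$ ($i\ge 1$) on the Temperley--Lieb side are both nonzero (hence proportional), which you assert from the submodule structure and self-duality of tilting modules without carrying it out; apart from that, the proof proposal is correct and is essentially the approach of the cited source.
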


\begin{remark} \label{TLlabel}
We can transport a labeling of indecomposable objects of $\uRep_{\mathsf{b}}(S_d)$ (see Proposition \ref{StBlockProp} (ii))
to a non-semisimple block
of the category $TL(q)$ via the equivalence of Proposition \ref{TLblocks} (it is easy to see that the resulting labeling does not
depend on a choice of the equivalence).
\end{remark}

Recall that the category $TL(q)$ has a natural spherical structure and so the dimensions $\dim_{TL(q)}(Y)$ 
of objects $Y\in TL(q)$ are defined, see e.g.~\cite[\S A1.3]{GW}.
The following result is well known, see e.g.~\cite[(1.6) and Proposition 3.5]{A}:

\begin{lemma} \label{TLneglig}
Let $L$ be a unique indecomposable object in a semisimple block of $TL(q)$. Then $\dim_{TL(q)}(L)=0$.
For a non-semisimple block we have $\dim_{TL(q)}(L_i)=0$ for $i>0$ and
$\dim_{TL(q)}(L_0)\ne 0$ where $L_i$ are indecomposable objects in this block labeled as in Remark \ref{TLlabel} $\square$
\end{lemma}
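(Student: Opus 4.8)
The plan is to deduce Lemma \ref{TLneglig} from the corresponding statement for $\uRep(S_d)$ contained in Proposition \ref{StBlockProp}, by transporting the dimension computation across the equivalence of additive categories in Proposition \ref{TLblocks}. The point is that ``being negligible'' is not a priori an additive-categorical notion — it involves the trace, hence the tensor and spherical structures — so one cannot simply invoke Proposition \ref{TLblocks} directly. Instead I would argue that in each block the vanishing of the dimension of an indecomposable object $L$ is equivalent to the purely additive statement that $\id_L$ lies in the Jacobson radical of $\End(L)$ when $\dim\End(L)>1$, or more robustly that $L$ is negligible if and only if $L$ is \emph{not} the unique indecomposable object whose identity has nonzero image under the (semisimplification) quotient functor to the associated semisimple category.

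First I would recall, for $TL(q)$, the well-known fact that the negligible morphisms form a tensor ideal (as noted in \S\ref{preT}) and that the quotient $TL(q) \twoheadrightarrow \bar\C_q$ is precisely the quotient by this ideal, so an indecomposable object $L$ of $TL(q)$ satisfies $\dim_{TL(q)}(L) = 0$ exactly when $L$ maps to $0$ in $\bar\C_q$, equivalently when $\id_L$ is negligible. Then, within a single block $\mathsf{b}$ of $TL(q)$: if $\mathsf{b}$ is semisimple it contains one indecomposable object $L$ with $\End(L)=F$, and one computes directly (or cites \cite[(1.6)]{A}) that $\dim_{TL(q)}(L)=0$ — this is the standard fact that the ``extra'' simple objects at a root of unity are exactly the negligible ones. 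If $\mathsf{b}$ is non-semisimple, the quotient functor kills all but finitely many indecomposables per block, and one identifies the single survivor as $L_0$: this is where the labeling of Remark \ref{TLlabel} must be matched up, and I would verify the match by comparing the structure of $\End(L_i)$ (or the Loewy structure of the projective covers) on both sides under the equivalence of Proposition \ref{TLblocks}, observing that the object with nonzero dimension is forced to be the one whose endomorphism algebra behaves in the distinguished way picked out by Proposition \ref{StBlockProp}(ii) for $\uRep_{\mathsf b}(S_d)$.

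The main obstacle I anticipate is precisely the bookkeeping in the non-semisimple case: one must be sure that the index $0$ in the $TL(q)$ labeling (defined via transport through Proposition \ref{TLblocks}) is the one with nonzero dimension, rather than its mirror under some symmetry of the block. To pin this down I would use that the full tensor functor $TL(q)\to\bar\C_q$ (equivalently, the functor to quantum $SL(2)$ tilting modules modulo negligibles) sends $\langle pt\rangle^{\otimes i}$ to the $i$-th tensor power of the $2$-dimensional representation, whose image in $\bar\C_q$ is nonzero; chasing which $V_i$ (hence which $L_j$) remains nonzero there, and then comparing with the analogous statement — that $L_0$ is the direct summand of lowest degree with nonzero dimension — gives the required identification. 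Once the labelings are reconciled, the lemma is immediate from Proposition \ref{StBlockProp} together with the ideal property of negligible morphisms; the remaining assertions (that $\dim_{TL(q)}(L_0)\neq 0$ and $\dim_{TL(q)}(L)=0$ in the semisimple case) are the classical computations of \cite{A} and require no new input. \hfill$\square$
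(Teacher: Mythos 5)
Your proof ultimately reduces, exactly as the paper's does, to citing Andersen \cite[(1.6) and Proposition 3.5]{A} for the dimension facts: the paper states Lemma~\ref{TLneglig} with precisely that citation and no further argument (note the $\square$ in the statement), and your closing sentence does the same. The extra discussion about matching the labeling of Remark~\ref{TLlabel} with Andersen's is not unreasonable, but the paper already disposes of the ambiguity in Remark~\ref{TLlabel} (``the resulting labeling does not depend on a choice of the equivalence''), and both the labeling of a non-semisimple block of $\uRep(S_d)$ from \cite{CO} and Andersen's on a $TL(q)$ block are by order of first appearance as a summand of a tensor power of the generating object, so the equivalence of Proposition~\ref{TLblocks} transports one to the other with no additional bookkeeping.

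Two intermediate claims in your first paragraph are, however, wrong and should be removed even though they are not load-bearing for the eventual argument. First, the identity morphism of a nonzero object \emph{never} lies in the Jacobson radical of its endomorphism ring (that is exactly what makes it the identity), so the asserted ``purely additive'' reformulation of negligibility via $\id_L\in\mathrm{rad}\,\End(L)$ fails at once; moreover it would also misclassify the semisimple blocks, where $\End(L)=F$ yet $L$ \emph{is} negligible. Second, the reformulation ``$L$ is negligible iff $L$ is not the unique indecomposable whose identity has nonzero image under the semisimplification quotient'' is circular: the semisimplification \emph{is} the quotient by the negligible ideal, which is defined via the trace and hence is not an additive-categorical invariant --- precisely the obstruction you correctly flagged at the outset of your proposal. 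In short, there is no purely additive characterization of negligibility here; the content really does come from Andersen's spherical-category computation, and the labeling match is supplied for free by Remark~\ref{TLlabel}.
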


\section{Tensor ideals and the object $\Delta \in \uRep(S_d)$}

In this section we define objects $\Delta_n\in\uRep(S_t)$ for $n\in\Z_{\geq0}$ and $t\in F$.  We then give $\Delta_n$ the structure of a commutative associative algebra in $\uRep(S_t)$ and study many $\Delta_n$-modules.  Finally, using our results on the objects $\Delta_n$, we classify tensor ideals in $\uRep(S_d)$ when $d$ is a nonnegative integer.  Before defining the objects $\Delta_n$ we prove the following easy observation which will be used throughout this section.
\begin{proposition}\label{polyt} Suppose $A_0,\ldots, A_n$ and $B_0,\ldots, B_m$ are finite sets with $A_0=B_0$ and $A_n=B_m$.   Suppose further that $f_i$ (resp. $g_i$) is an $F$-linear combination of partitions of $A_{i-1}\sqcup A_i$ (resp. $B_{i-1}\sqcup B_i$) whose coefficients do not depend on $t$ for all $1\leq i\leq n$ (resp. $1\leq i\leq m$).  If $f_n\cdots f_1=g_m\cdots g_1$ in $\uRep_0(S_t)$ for infinitely many values of $t\in F$, then $f_n\cdots f_1=g_m\cdots g_1$ in $\uRep_0(S_t)$ for all $t\in F$.
\end{proposition}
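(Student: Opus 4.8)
The plan is to express both sides of the claimed identity as vectors in a fixed finite-dimensional $F$-vector space whose coordinates are \emph{polynomials} in $t$, and then invoke the fact that a polynomial with infinitely many roots vanishes identically. First I would fix the finite set $C := A_0 = B_0$ and $D := A_n = B_m$, and work in $\Hom_{\uRep_0(S_t)}([C],[D])$, which by definition is the $F$-span of partitions of $C \sqcup D$; crucially this set of partitions is \emph{independent of $t$}, so $\Hom_{\uRep_0(S_t)}([C],[D])$ is canonically the same $F$-vector space $V$ for every $t$, with a fixed basis indexed by partitions of $C \sqcup D$. The composite $f_n \cdots f_1$ is an element of $V$; the content of the proposition is that its coordinates in the partition basis are polynomial functions of $t$.

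The key step is therefore the following claim: for composable $F$-linear combinations of partitions $f_i$ whose coefficients do not depend on $t$, each coordinate of $f_n \cdots f_1 \in V$ (in the fixed partition basis) is a polynomial in $t$ with coefficients determined by the (fixed) coefficients of the $f_i$. I would prove this by induction on $n$. The case $n = 1$ is trivial since $f_1$ itself has $t$-independent coordinates. For the inductive step, it suffices to understand the composition $\Hom([C],[B]) \times \Hom([B],[D]) \to \Hom([C],[D])$ on basis elements: if $\pi$ is a partition of $C \sqcup B$ and $\mu$ is a partition of $B \sqcup D$, then by the definition of composition in $\uRep_0(S_t)$ we have $\mu \circ \pi = t^{\ell(\mu,\pi)} (\mu \cdot \pi)$, where $\mu \cdot \pi$ is a partition of $C \sqcup D$ and $\ell(\mu,\pi) \in \BZ_{\ge 0}$ — both $\mu \cdot \pi$ and $\ell(\mu,\pi)$ depend only on $\mu$ and $\pi$, not on $t$. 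Hence composition with a fixed $t$-independent linear combination is, coordinate-by-coordinate, multiplication by a matrix whose entries are monomials $c \cdot t^{\ell}$ with $c \in F$ and $\ell \in \BZ_{\ge 0}$ fixed; applying such a map to a vector of polynomials in $t$ yields a vector of polynomials in $t$. This establishes the claim.

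Granting the claim, the proof concludes quickly: write $f_n \cdots f_1 = \sum_P p_P(t)\, P$ and $g_m \cdots g_1 = \sum_P q_P(t)\, P$, the sums over partitions $P$ of $C \sqcup D$, with all $p_P, q_P \in F[t]$. For each $P$, the polynomial $p_P - q_P \in F[t]$ vanishes at infinitely many values of $t \in F$ by hypothesis (here we use that $F$ has characteristic zero, so $F$ is infinite and a nonzero polynomial has only finitely many roots); therefore $p_P = q_P$ as polynomials, so $p_P(t) = q_P(t)$ for \emph{every} $t \in F$. Summing over $P$ gives $f_n \cdots f_1 = g_m \cdots g_1$ in $\uRep_0(S_t)$ for all $t \in F$, as desired.

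The only mild subtlety — and the step I would be most careful about — is the bookkeeping in the inductive claim: one must be sure the vector space $V$, its partition basis, and the combinatorial data $\mu \cdot \pi$ and $\ell(\mu,\pi)$ genuinely do not vary with $t$ (they do not, as they are defined purely set-theoretically in the definition of $\uRep_0(S_t)$), so that "polynomial in $t$" is a meaningful statement about a single ambient space rather than a family of spaces. Once that is set up cleanly, everything else is the routine observation that polynomials are closed under the relevant operations.
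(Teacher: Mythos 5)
Your proof is correct and follows essentially the same strategy as the paper's: express the composite in the partition basis of $\Hom_{\uRep_0(S_t)}([A_0],[A_n])$ (a fixed vector space independent of $t$), observe from the definition of composition that the coordinates are polynomials in $t$, and conclude by noting a polynomial is determined by infinitely many values. The paper's version simply abbreviates the inductive verification that the coordinates are polynomial, which you spell out, but the underlying argument is the same.
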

\begin{proof} For each $t\in F$ and partition $\pi$ of $A_0\sqcup A_n=B_0\sqcup B_m$, let $a_\pi(t)\in F$ (resp. $b_\pi(t)\in F$) be such that 
$f_n\cdots f_1=\sum_\pi a_\pi(t) \pi$ (resp. $g_m\cdots g_1=\sum_\pi b_\pi(t)\pi$) in $\uRep_0(S_t)$  where the sum is taken over all partitions $\pi$ of $A_0\sqcup A_n=B_0\sqcup B_m$.  Then $f_n\cdots f_1=g_m\cdots g_1$ in $\uRep_0(S_t)$ if and only if $a_\pi(t)=b_\pi(t)$ for all $\pi$.  By the definition of composition in $\uRep_0(S_t)$, both $a_\pi(t)$ and $b_\pi(t)$ are polynomials in $t$ for each $\pi$.  The result follows since a polynomial in $t$ is determined by finitely many values of $t$.
\end{proof}

\subsection{The objects $\Delta_n\in \uRep(S_t)$}

Suppose $n$ is a nonnegative integer and let $A_n=\{i~|~1\leq i\leq n\}$.  Consider the endomorphism $x_n=x_{\id_n}:[A_n]\to[A_n]$ in $\uRep_0(S_t)$ (see \cite[Equation (2.1)]{CO}).  

\begin{proposition}\label{xidemp} $x_n$ is an idempotent which is equal to its dual for all $n\geq 0$.
\end{proposition}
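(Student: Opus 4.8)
The plan is to reduce everything to a computation with partitions, then invoke Proposition~\ref{polyt} to handle the dependence on $t$. First I would recall explicitly what the morphism $x_n = x_{\id_n}$ is: following \cite[Equation (2.1)]{CO}, it is the $F$-linear combination of partitions of $A_n\sqcup A_n$ obtained from the identity partition (which pairs the $i$-th point on the bottom with the $i$-th point on the top) by an inclusion–exclusion type alternating sum over ways of merging bottom and top strands; concretely $x_n$ is built multiplicatively as $x_n = x_1^{\otimes n}$ from the single-strand case (up to reordering by the symmetric braiding), so it suffices to treat $n=1$ and then tensor up, since tensor products of idempotents are idempotents and $(f\otimes g)^* = f^*\otimes g^*$.

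Thus the core is to show that $x_1 \in \End([pt])$ is a self-dual idempotent. The space $\End([pt])$ is spanned by the two partitions of $pt\sqcup pt$: the "identity" partition $I$ (two singletons, i.e.\ one point on each level in its own part — wait, with $pt\sqcup pt$ a two-element set this is the partition into two singletons) and the "cup-cap" partition $U$ (both points in one part). One has $x_1 = I - \tfrac{1}{t}\,U$ (the normalization being exactly what kills the "trace through" component); this is the standard formula, and I would verify $x_1^2 = x_1$ directly using the multiplication rules $I\cdot I = I$, $U\cdot I = I\cdot U = U$, and $U\cdot U = t\,U$ (the factor $t$ coming from one closed part contained in the middle set $B$, i.e.\ $\ell = 1$). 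Self-duality is immediate once one checks that both $I$ and $U$ are fixed by the duality operation on $\Hom([A_n],[A_n])$, which on the level of partitions of $A_n\sqcup A_n$ is just the involution swapping the two copies of $A_n$ — and the identity partition and the all-one-part partition are each visibly symmetric under that swap. Tensoring, $x_n = x_1^{\otimes n}$ is a self-dual idempotent.

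The one genuine subtlety — and the step I expect to be the main obstacle — is that the formula $x_1 = I - \tfrac{1}{t}U$ involves dividing by $t$, so it is a priori only defined for $t\neq 0$, whereas the claim is for all $n\ge 0$ and all $t\in F$. The resolution is exactly Proposition~\ref{polyt}: the reference \cite[Equation (2.1)]{CO} in fact defines $x_n$ over all $t$ as an $F$-linear combination of partitions with coefficients that are \emph{polynomial} in $t$ (the genuine Young-symmetrizer-style definition clears the denominators), and the relations we need — $x_n\cdot x_n = x_n$ and $x_n^* = x_n$ — are each an equality of two such polynomial-in-$t$ linear combinations of partitions of $A_n\sqcup A_n$. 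Since they hold for all $t\neq 0$ by the computation above (an argument valid in the semisimple range is more than enough — in fact for $t\notin\Z_{\ge 0}$, where $\uRep(S_t)$ is semisimple and $x_n$ is honestly the idempotent cutting out $L$ of the appropriate shape, so idempotency is automatic and self-duality follows from the partition symmetry), they hold for infinitely many $t$, hence by Proposition~\ref{polyt} for all $t\in F$. I would therefore organize the write-up as: (1) recall the polynomial-in-$t$ definition of $x_n$; (2) observe $x_n = x_1^{\otimes n}$ up to symmetric-group reindexing and reduce to $n=1$; (3) compute $x_1^2 = x_1$ and $x_1^* = x_1$ for $t\neq 0$ using the multiplication table of the two partitions; (4) conclude for all $t$ by Proposition~\ref{polyt}.
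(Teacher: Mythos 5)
Your overall strategy --- reduce to an identity between polynomial-in-$t$ linear combinations of partitions and invoke Proposition~\ref{polyt} --- is the right one and is exactly what the paper does. But the computational core of your argument rests on two claims about $x_n$ that are false, and the error is fatal. First, $x_1$ is \emph{not} $I-\tfrac1t U$; in fact $x_1=\id_{[pt]}$. You can read this off from equation~(\ref{ortho}) at $n=0$, which says $x_1 = x_0\otimes\id_{[pt]} = \id_{[\varnothing]}\otimes\id_{[pt]} = \id_{[pt]}$, or from Proposition~\ref{Deltadim}, which gives $\dim\Delta_1 = t = \dim[pt]$. The idempotent $\id_{[pt]}-\tfrac1t\,e$ (with $e$ the two-singleton partition) that you actually compute with is the projector onto the ``standard'' summand of the permutation module for $t$ a large integer; it is a genuine idempotent but it is not $x_1$, and it is only defined for $t\ne0$. (Incidentally, your stated labels for $I$ and $U$ are swapped relative to the multiplication table you then use --- the one-part partition is the identity morphism of $[pt]$, not the two-singleton partition --- but that is not the substantive problem.) Second, and consequently, $x_n\ne x_1^{\otimes n}$: the right-hand side would be $\id_{[A_n]}$, whereas $x_n$ is a \emph{proper} idempotent in $\End([A_n])$ for every $n\ge2$ (for $t=N$ a large integer it projects onto the injective $n$-tuples in $\{1,\dots,N\}^n$, a proper summand of $[pt]^{\otimes n}$). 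So the reduction to a single strand is not available, and there is no two-term formula for $x_n$ to multiply out by hand.

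What the paper does instead is shorter and does not require any explicit formula. Self-duality is read off directly from the definition in \cite[Equation (2.1)]{CO}: $x_{\id_n}$ is a linear combination of partitions of $A_n\sqcup A_n$, with coefficients invariant under the swap of the two copies of $A_n$, and each partition appearing is itself fixed by that swap because the reference partition $\id_n=\{\{1,1'\},\dots,\{n,n'\}\}$ is. For idempotency, rather than computing, one invokes \cite[Theorem 2.6 and Equation (2.2)]{CO}, which for every integer $t>2n$ identifies $\End_{\uRep_0(S_t)}([A_n])$ with a concrete endomorphism algebra of the $S_t$-permutation module on $n$-tuples under which $x_n$ corresponds to an honest orthogonal projector; this gives idempotency for infinitely many $t$, and then Proposition~\ref{polyt} finishes. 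Of your four steps, (1) and (4) are correct and coincide with the paper; (2) and (3) need to be replaced by this appeal to \cite[Theorem 2.6 and Equation (2.2)]{CO}.
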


\begin{proof} The fact that $x_n^\ast=x_n$ follows from the definition of $x_n$.  By Proposition \ref{polyt}, it suffices to show $x_n$ is an idempotent in $\uRep_0(S_t)$ for infinitely many values of $t$. 
It follows from \cite[Theorem 2.6 and Equation (2.2)]{CO} that $x_n$ is an idempotent in $\uRep_0(S_t)$ whenever $t$ is an integer greater than $2n$. 
\end{proof}

Since $\uRep(S_t)$ is a Karoubian category (i.e. $\uRep(S_t)$ contains images of idempotents) the following definition is valid.

\begin{definition} Let $\Delta_n\in\uRep(S_t)$ denote the image of the idempotent $x_n$.\footnote{In the notation of \cite{CO}, $\Delta_n=([n], x_n)$.} 
\end{definition}

%First, let us observe the following property of $\Delta_n$.

%\begin{proposition}\label{DeltaXnonzero}  (1) Suppose $f:X\to Y$ is a nonzero morphism in $\uRep(S_t)$.  Then $\id_{\Delta_n}\otimes f\not=0$.

%(2) $\Delta_n\otimes X\not=0$ whenever $0\not=X\in\uRep(S_t)$. \end{proposition}

%\begin{proof}  To prove part (1) it suffices to consider the case when $X$ and $Y$ are indecomposable, hence we can assume $X=\text{im}(e), Y=\text{im}(e')$ where $e$ and $e'$ are nonzero idempotents in $\uRep_0(S_t)$ (see \cite[Proposition 2.20(2)]{CO}).  In this case $f=e'f'e$ for some nonzero morphism $f'$ in $\uRep_0(S_t)$.  Thus $\id_{\Delta_n}\otimes f=x_n\otimes e'f'e$ is nonzero by the definition of the tensor product in $\uRep_0(S_t)$.

%Part (2) follows from part (1) by setting $f=\id_X$. \end{proof}

Note that the commutative associative algebra structure on $[pt]$ extends in an obvious way to a commutative associative algebra structure on $[A_n]\cong[pt]^{\otimes n}$.  Let $\mu_n:[A_n]\otimes[A_n]\to[A_n]$ and $1_n:{\bf 1}\to[A_n]$ denote the multiplication and unit maps respectively.  

\begin{proposition}\label{deltalg} The multiplication map $x_n\mu_n(x_n\otimes x_n):\Delta_n\otimes\Delta_n\to\Delta_n$  gives $\Delta_n$ the structure of a commutative associative algebra in $\uRep(S_t)$ with unit given by $x_n1_n:{\bf 1}\to\Delta_n$.
\end{proposition}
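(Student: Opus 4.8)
The plan is to check that the three algebra axioms -- associativity, commutativity, and the unit axiom -- hold for $(\Delta_n, x_n\mu_n(x_n\ot x_n), x_n 1_n)$, reducing each to an already-known identity for $([A_n], \mu_n, 1_n)$ together with the idempotency and self-duality of $x_n$ from Proposition \ref{xidemp}. Since $\Delta_n$ is by definition the image of the idempotent $x_n:[A_n]\to[A_n]$, a morphism $\Delta_n^{\ot k}\to\Delta_n$ is the same datum as a morphism $[A_n]^{\ot k}\to[A_n]$ which absorbs $x_n$ on each tensor factor of the source and on the target; so all the computations can be carried out in $\End$-spaces and $\Hom$-spaces of the form $\Hom([A_n]^{\ot k},[A_n])$ inside $\uRep_0(S_t)$, where by Proposition \ref{polyt} it suffices to verify equalities for infinitely many $t\in F$ -- e.g. for all sufficiently large integers $t$.

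\medskip

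First I would record the one structural fact that makes everything go through: $x_n$ is a \emph{morphism of algebras} in the sense that $x_n\mu_n = x_n\mu_n(x_n\ot x_n) = \mu_n(x_n\ot x_n)$ -- equivalently, the partition $x_n$ commutes appropriately with the one-part merging partitions defining $\mu_n$. This can be seen directly from the combinatorial description of $x_n=x_{\id_n}$ in \cite[Equation (2.1)]{CO} (it is a signed sum of partitions of $A_n\sqcup A_n$ refining the ``identity'' partition, supported on permutation-type partitions), or deduced from the known fact that $([A_n], x_n)$ carries the structure making it the relevant indecomposable-ish summand; either way, for $t$ a large integer the functor $\uRep(S_t)\to\Rep(S_t)$ is faithful on the relevant $\Hom$'s, and there $\Delta_n$ is realized concretely, so the identity can be checked by a finite computation and then propagated by Proposition \ref{polyt}. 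Granting this, associativity of $x_n\mu_n(x_n\ot x_n)$ follows from associativity of $\mu_n$ by inserting and deleting copies of $x_n=x_n^2$ at will; commutativity follows from commutativity of $\mu_n$ together with the fact (again from the explicit form of $x_n$, or from naturality of $x_n$ under the $S_n$-action) that $x_n$ is $S_n$-invariant, in particular unchanged by the flip $\flip$ on $[A_n]\ot[A_n]$ restricted to the diagonal; and the unit axiom $x_n\mu_n(x_n\ot x_n)(x_n1_n\ot\id_{\Delta_n}) = \id_{\Delta_n}$ reduces to $\mu_n(1_n\ot\id) = \id_{[A_n]}$ after absorbing the idempotents, using $x_n 1_n = 1_n$ (which holds because $1_n$ factors through the trivial $S_n$-isotypic piece, on which $x_n$ -- a multiple of a Young symmetrizer for the trivial or some partition -- acts as identity; more robustly, it is yet another polynomial-in-$t$ identity checkable for large integers).

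\medskip

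Concretely the steps, in order, are: (1) state and prove $x_n\mu_n(x_n\ot x_n)=\mu_n(x_n\ot x_n)=x_n\mu_n$ and $x_n 1_n = 1_n$ by checking in $\uRep_0(S_t)$ for integers $t>2n$ (or directly from the combinatorics of partitions) and invoking Proposition \ref{polyt}; (2) deduce commutativity from $\flip_{[A_n],[A_n]}$-invariance of $\mu_n$ and of $x_n\ot x_n$; (3) deduce associativity from associativity of $\mu_n$ by freely inserting $x_n^2=x_n$; (4) deduce the unit axiom. I expect step (1) -- really the assertion that $x_n$ is compatible with the multiplication -- to be the only substantive point; once it is in hand, steps (2)--(4) are purely formal manipulations with idempotents in a Karoubian category and carry essentially no content. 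An alternative route that avoids even step (1)'s combinatorics: observe that $[A_n]\cong[pt]^{\ot n}$ is the $n$-th tensor power of the algebra $[pt]$, so $([A_n],\mu_n,1_n)$ is a tensor-power algebra, and $x_n$ is an idempotent endomorphism which one checks to be a sub-algebra idempotent (its image is closed under $\mu_n$ and contains $1_n$); this is exactly the statement that $x_n$ is an idempotent in the commutant-type algebra governing $[pt]^{\ot n}$, which is the content of \cite[Theorem 2.6]{CO}. Either way, the proof is short.
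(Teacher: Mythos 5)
Your idea of factoring the proof through a single structural compatibility between $x_n$ and the algebra structure on $[A_n]$ is reasonable, and the first half of your step~(1) is in fact correct: the identity
\[
x_n\mu_n \;=\; \mu_n(x_n\otimes x_n)\;=\;x_n\mu_n(x_n\otimes x_n)
\]
holds (for $t$ a large integer it is visible in the image inside $\Rep(S_t)$, where $x_n$ is the projection onto functions on $\{1,\dots,t\}^n$ supported on \emph{injective} tuples and $\mu_n$ is pointwise multiplication; Proposition~\ref{polyt} then propagates it to all $t$). But the second half of your step~(1) is false: $x_n 1_n\neq 1_n$ once $n\geq 2$. In the same concrete picture $1_n$ is the constant function $1$, which is not supported on injective tuples, so $x_n1_n$ is the indicator function of the injective locus, a genuinely different vector. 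Your heuristic --- that $1_n$ lands in a ``trivial isotypic piece'' on which $x_n$ acts as identity --- conflates $x_n=x_{\id_n}$ with a Young symmetrizer; $x_n$ is the M\"obius-inversion idempotent of \cite[Equation~(2.1)]{CO}, and it does not fix the constant function. The same error reappears in your proposed ``alternative route'': the image of $x_n$ is closed under $\mu_n$, but it does \emph{not} contain $1_n$, so $\Delta_n$ is a non-unital subalgebra of $[A_n]$ whose unit $x_n1_n$ is new. Since your derivation of the unit axiom rests precisely on $x_n1_n=1_n$, that step as written fails.

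The gap is, however, easily patched and the wrong identity is in fact unnecessary: from $x_n\mu_n=\mu_n(x_n\otimes x_n)$ alone one gets
\[
\mu_n(x_n1_n\otimes x_n)\;=\;\mu_n(x_n\otimes x_n)(1_n\otimes\id_{[A_n]})\;=\;x_n\mu_n(1_n\otimes\id_{[A_n]})\;=\;x_n,
\]
and hence $x_n\mu_n(x_n\otimes x_n)((x_n1_n)\otimes x_n)=x_n$, which is exactly (\ref{D2}); the other two axioms follow formally from the same identity together with idempotency of $x_n$ and associativity/commutativity of $\mu_n$, as you intended. For comparison, the paper's own proof makes no attempt to isolate a structural lemma: it writes down the raw equations (\ref{D1})--(\ref{D3}) and checks them directly for all sufficiently large integer $t$ via \cite[Theorem~2.6 and Equation~(2.2)]{CO}, then applies Proposition~\ref{polyt}. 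Once corrected, your route trades three separate partition computations for a single clean one plus formal manipulations, which is a genuine (if modest) streamlining; as written, though, it contains a false step.
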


\begin{proof}  
We are required to show the following equalities hold in $\uRep_0(S_t)$:
\begin{equation} \label{D1}
x_n\mu_n(x_n\mu_n(x_n\otimes x_n)\otimes x_n)=x_n\mu_n(x_n\otimes(x_n\mu_n(x_n\otimes x_n)),
 \end{equation}
\begin{equation}\label{D2}
x_n\mu_n(x_n1_n\otimes x_n)=x_n=x_n\mu_n(x_n\otimes x_n1_n),
\end{equation}
\begin{equation}\label{D3}
x_n\mu_n(x_n\otimes x_n)\beta_{n,n}(x_n\otimes x_n)=x_n\mu_n(x_n\otimes x_n),
\end{equation} where $\beta_{n,n}:A_n\otimes A_n\to A_n\otimes A_n$ is the braiding morphism (see for example  \cite[\S2.2]{CO}).  By Proposition \ref{polyt}, it suffices to show (\ref{D1}), (\ref{D2}), (\ref{D3}) hold for infinitely many values of $t$.\footnote{In fact, (\ref{D1}), (\ref{D2}), (\ref{D3}) do not  depend on $t$, so we only need to verify they hold for some $t$. }  Using \cite[Theorem 2.6 and Equation (2.2)]{CO} it is easy to show (\ref{D1}), (\ref{D2}), (\ref{D3}) hold whenever $t$ is a sufficiently large integer.
\end{proof}

By Proposition \ref{deltalg} we can consider the category $\Delta_n$-mod of all left $\Delta_n$-modules.  

\subsection{Some $\Delta_n$-modules}
 Suppose $j$ is a nonnegative integer with $1\leq j\leq n$.  Give a finite set $X$, let $\Theta_X^j:\Hom_{\uRep(S_t)}( A_n, X)\to\Hom_{\uRep(S_t)}(A_{n+1}, X)$ and $\Theta^X_j:\Hom_{\uRep(S_t)}(X, A_n)\to\Hom_{\uRep(S_t)}(X, A_{n+1})$ be the $F$-linear maps defined on partitions as follows:  if $\pi$ is a partition of $X\sqcup A_n$, then $\Theta_X^j(\pi)=\Theta^X_j(\pi)$ is the unique partition of $X\sqcup A_{n+1}$ which restricts to $\pi$ and has $j$ and $n+1$ in the same part.  Now let $\Theta_j:\End_{\uRep(S_t)}(A_n)\to\End_{\uRep(S_t)}(A_{n+1})$ be the $F$-linear map $\Theta_j=\Theta_{A_n}^j\circ\Theta^{A_n}_j$.  It is easy to check that $\Theta_j$ is an injective (non-unital) $F$-algebra homomorphism for each $1\leq j\leq n$.  In particular, by Proposition \ref{xidemp}, $x_{n,j}:=\Theta_j(x_n)$ is an idempotent for each $j$.  
 
\begin{definition} 
Let $\Delta_n(j)\in\uRep(S_t)$ denote the image of $x_{n, j}$. 
\end{definition}

Next we give $\Delta_n(j)$ the structure of a $\Delta_n$-module.  
Let $\alpha=x_{n,j}\Theta^{A_n}_j(x_n)x_n:\Delta_n\to\Delta_n(j)$ and $\beta=x_{n,j}\Theta_j(\mu_n)(x_{n,j}\otimes x_{n,j}):\Delta_n(j)\otimes\Delta_n(j)\to\Delta_n(j)$.  Finally, let $\phi=\beta(\alpha\otimes x_{n,j}):\Delta_n\otimes\Delta_n(j)\to\Delta_n(j)$.
 
\begin{proposition}\label{Deltaj}

(1)  The map $\phi$  gives $\Delta_n(j)$ the structure of a $\Delta_n$-module.

(2) The map $ x_{n,j}\Theta_j^{A_n}(\id_{A_n})x_n:\Delta_n\to\Delta_n(j)$ is an isomorphism of $\Delta_n$-modules with inverse $x_n\Theta^j_{A_n}(\id_{A_n}) x_{n,j}$.

\end{proposition}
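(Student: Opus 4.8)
The plan is to reduce every assertion to an equality between two composites of $F$-linear combinations of partitions \emph{whose coefficients do not depend on $t$}, and then invoke Proposition \ref{polyt} to check it only for sufficiently large integer values of $t$, exactly as in the proofs of Propositions \ref{xidemp} and \ref{deltalg}. This is legitimate because $x_n = x_{\id_n}$ is a fixed integral combination of partitions, and hence so are $x_{n,j} = \Theta_j(x_n)$, the (single) partitions $\mu_n$ and $1_n$, the braidings, and the images under $\Theta^X_j$, $\Theta^j_X$, $\Theta_j$ of such combinations; although composition in $\uRep_0(S_t)$ introduces powers of $t$, Proposition \ref{polyt} is tailored precisely to equalities between composites of $t$-independent pieces. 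So for each identity below, both sides are rewritten as such composites and then verified for $t = d$ a large integer. In that range the cleanest bookkeeping is to pass through the functor $\uRep(S_d)\to\Rep(S_d)$ of \S\ref{Tabc}, which is faithful on Hom spaces between objects of bounded size once $d$ is large (no negligible morphisms there, cf.~\cite[\S 6]{Del07}), and to identify: $\Delta_n$ becomes the algebra of $F$-valued functions on the finite set $\mathrm{Inj}(A_n, I)$ of injections $A_n\hookrightarrow I$ with pointwise multiplication; $\Delta_n(j)$ becomes the algebra of functions on $Y_j := \{g : A_{n+1}\to I \mid g|_{A_n}\text{ injective},\ g(n+1)=g(j)\}$; the map $\Theta^{A_n}_j(\id_{A_n})$ becomes "duplicate the $j$th coordinate into a new $(n+1)$st coordinate"; and $\phi$ becomes the regular action of $\Delta_n$ transported along the bijection $Y_j \xrightarrow{\ \sim\ } \mathrm{Inj}(A_n,I)$, $g\mapsto g|_{A_n}$.

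For part (1), the identities to verify are the associativity axiom $\phi\circ(m\otimes\id_{\Delta_n(j)}) = \phi\circ(\id_{\Delta_n}\otimes\phi)$, where $m = x_n\mu_n(x_n\otimes x_n)$ is the multiplication of Proposition \ref{deltalg}, and the unit axiom $\phi\circ(x_n1_n\otimes\id_{\Delta_n(j)}) = \id_{\Delta_n(j)}$. Pre- and post-composing with the defining idempotents $x_n$ and $x_{n,j}$ turns these into equalities of explicit morphisms between tensor powers of $[A_n]$ and $[A_{n+1}]$ in $\uRep_0(S_t)$; by Proposition \ref{polyt} it suffices to check them for large integer $t$, where in the function-algebra picture they are immediate ($\phi$ is the regular module structure, in particular unital of rank one).

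For part (2), put $\psi := x_{n,j}\Theta^{A_n}_j(\id_{A_n})x_n$ and $\psi' := x_n\Theta^j_{A_n}(\id_{A_n})x_{n,j}$. It suffices to prove $\psi'\psi = x_n$ and $\psi\psi' = x_{n,j}$ — these being the identity endomorphisms of $\Delta_n$ and $\Delta_n(j)$ in the Karoubian envelope, so that $\psi,\psi'$ are mutually inverse isomorphisms of objects — together with the single intertwining identity $\psi\circ m = \phi\circ(\id_{\Delta_n}\otimes\psi)$, which says $\psi$ is a morphism of $\Delta_n$-modules; that $\psi'$ is then also a module morphism is automatic. Each of these is again an equality of composites of $t$-independent partition combinations, so Proposition \ref{polyt} reduces it to large integer $t$, where $\psi$ is visibly the isomorphism $F[\mathrm{Inj}(A_n,I)]\to F[Y_j]$ induced by the coordinate bijection above and plainly intertwines the two module structures.

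The main obstacle is the large-$t$ verification itself. Whether one carries out the partition bookkeeping directly via \cite[Theorem 2.6 and Equation (2.2)]{CO} or instead identifies $\Delta_n$, $\Delta_n(j)$ and all the structure morphisms $m$, $\alpha$, $\beta$, $\phi$, $\psi$, $\psi'$ with their concrete function-algebra counterparts in $\Rep(S_d)$ and transports the identities back, the step is elementary but requires some care with the indices $j$ and $n+1$ and with which coordinates the various $\Theta$-operations act on; everything else is formal once Proposition \ref{polyt} is in hand.
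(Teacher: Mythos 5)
Your proposal follows essentially the same route as the paper: both reduce each assertion to equalities of composites of $t$-independent partition linear combinations in $\uRep_0(S_t)$, invoke Proposition \ref{polyt} to reduce to large integer $t$, and then carry out the verification there using \cite[Theorem 2.6]{CO} (you additionally offer the equivalent option of transporting to the function-algebra picture in $\Rep(S_d)$, which is the same large-$t$ verification in a more pictorial language). You are in fact slightly more thorough than the paper, which for part~(1) writes down only the associativity equation and for part~(2) writes only the two-sided inverse conditions without spelling out the $\Delta_n$-module intertwining identity; your explicit inclusion of the unit axiom and of $\psi\circ m = \phi\circ(\id_{\Delta_n}\otimes\psi)$ is a small but welcome addition.
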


\begin{proof} 
For part (1) we are required to show the following equation holds in $\uRep_0(S_t)$:
\begin{equation}\label{Dj1} 
\begin{array}{l}
x_{n,j}\Theta_j(\mu_n)(( x_{n,j}\Theta^{A_n}_j(x_n)x_n\mu_n(x_n\otimes x_n))\otimes x_{n,j})\\
= x_{n,j}\Theta_j(\mu_n)( x_{n,j}\Theta^{A_n}_j(x_n)x_n\otimes( x_{n,j}\Theta_j(\mu_n)(( x_{n,j}\Theta^{A_n}_j(x_n)x_n)\otimes x_{n,j}))).
 \end{array}
\end{equation}
For part (2) we are required to show the following equations hold in $\uRep_0(S_t)$:
\begin{equation}\label{Dj2}
 x_{n,j}\Theta_j^{A_n}(\id_{A_n})x_n\Theta^j_{A_n}(\id_{A_n}) x_{n,j}=x_{n,j},
\end{equation}
\begin{equation}\label{Dj3}
x_n\Theta^j_{A_n}(\id_{A_n})  x_{n,j}\Theta_j^{A_n}(\id_{A_n})x_{n}=x_{n}.
\end{equation}  
Now use Proposition \ref{polyt} and \cite[Theorem 2.6 and Equation (2.2)]{CO}.
\end{proof}

Next, we give the object $\Delta_{n+1}$ the structure of a $\Delta_n$-module.  To do so, set $\psi=x_{n+1}(\mu_n\otimes\id_{[pt]})(x_n\otimes x_{n+1}):\Delta_n\otimes\Delta_{n+1}\to\Delta_{n+1}$.

\begin{proposition} The map $\psi$  gives $\Delta_{n+1}$ the structure of a $\Delta_n$-module.
\end{proposition}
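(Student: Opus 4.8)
The plan is to proceed exactly as in the proofs of Propositions~\ref{deltalg} and~\ref{Deltaj}: reduce the verification of associativity and unitality of the action to an identity of $F$-linear combinations of partitions in $\uRep_0(S_t)$, and then invoke Proposition~\ref{polyt} to pass from ``infinitely many $t$'' (in fact a single sufficiently large integer $t$) to all $t\in F$. Concretely, writing $\psi=x_{n+1}(\mu_n\otimes\id_{[pt]})(x_n\otimes x_{n+1})$, the two things to check are the associativity square
\[
\psi\bigl((x_n\mu_n(x_n\otimes x_n))\otimes x_{n+1}\bigr)=\psi\bigl(x_n\otimes\psi(x_n\otimes x_{n+1})\bigr)
\]
as maps $\Delta_n\otimes\Delta_n\otimes\Delta_{n+1}\to\Delta_{n+1}$, together with the unit axiom $\psi(x_n1_n\otimes x_{n+1})=x_{n+1}$. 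Each of these, after unfolding the definitions of $\psi$, $x_n$, $x_{n+1}$, $\mu_n$, $1_n$ and the idempotent $x_{n+1}$, becomes an equality between two explicit $F$-linear combinations of partitions of $A_n\sqcup A_n\sqcup A_{n+1}\sqcup A_{n+1}$ (resp. $A_{n+1}\sqcup A_{n+1}$) whose coefficients are polynomial in $t$; by Proposition~\ref{polyt} it is enough to verify them for, say, every integer $t>2(n+1)$.

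The substance of the argument is the same as in the earlier propositions: for such large integral $t$ one uses \cite[Theorem 2.6 and Equation (2.2)]{CO}, which identifies $\uRep_0(S_t)$-morphisms between the relevant objects with honest partitions and makes $x_n$ the projector onto $\Delta_n$ inside $[pt]^{\otimes n}$, to reduce everything to the manifestly true associativity and unitality of the multiplication $\mu_n$ on $[pt]^{\otimes n}$ restricted to the image of $x_n\otimes x_{n+1}$. In other words, $\Delta_n\otimes\Delta_{n+1}$ is a direct summand of $[pt]^{\otimes n}\otimes[pt]^{\otimes n+1}$, the map $\mu_n\otimes\id_{[pt]}$ is (a summand of) the algebra multiplication $[pt]^{\otimes n}\otimes[pt]^{\otimes n+1}\to[pt]^{\otimes n+1}$ that ignores the last tensor factor of the target and multiplies the first $n$ factors of source into the first $n$ of target, and compressing by $x_{n+1}$ on the outside and $x_n\otimes x_{n+1}$ on the inside yields a genuine module structure because $x_{n+1}\mu_{n+1}$ already makes $[pt]^{\otimes n+1}$ a commutative algebra over $\Delta_{n+1}$ and hence over $\Delta_n$ via the algebra map $\alpha:\Delta_n\to\Delta_{n+1}$ of Proposition~\ref{Deltaj}(2); the point $\mu_n\otimes\id_{[pt]}$ agrees with $\mu_{n+1}\circ(\id\otimes\alpha)$ up to the idempotents is exactly the kind of partition identity that holds for large integral $t$ and then for all $t$.

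The only mild obstacle is purely bookkeeping: one must write the associativity and unit identities as composites of $t$-independent linear combinations of partitions in the precise format demanded by Proposition~\ref{polyt} (a chain $f_k\cdots f_1=g_m\cdots g_1$ with fixed coefficients), which requires being slightly careful that every occurrence of $\mu_n$, $\Theta_j$, $x_n$, $x_{n+1}$ is expanded into a linear combination of partitions with coefficients not depending on $t$ — this is true because all these morphisms are defined by partitions (or by $\mu_n$, which is a single partition), so no $t$-dependent scalars enter except through composition, which is the hypothesis of Proposition~\ref{polyt}. Once the identities are set up this way, the verification at a large integer $t$ is immediate from \cite[Theorem 2.6 and Equation (2.2)]{CO} and the elementary associativity/unitality of $\mu_n$, so I will not carry out the partition computation in detail. \hfill$\square$
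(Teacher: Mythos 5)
Your approach matches the paper's: reduce the module axiom(s) for $\psi$ to $t$-independent identities of linear combinations of partitions in $\uRep_0(S_t)$, verify them at sufficiently large integral $t$ using results from \cite{CO}, and conclude for all $t$ by Proposition~\ref{polyt}. Two small points. For this particular computation the paper cites \cite[Theorem~2.6 and Equation~(2.6)]{CO}, not Equation~(2.2); the latter is invoked only for the $x_n$-internal calculations of Propositions~\ref{xidemp} and~\ref{deltalg}, while everything involving $x_{n+1}$ inside $[A_{n+1}]=[A_n]\otimes[pt]$ goes through Equation~(2.6). More substantively, your motivating paragraph appeals to an algebra map $\alpha:\Delta_n\to\Delta_{n+1}$ supposedly from Proposition~\ref{Deltaj}(2); no such map is provided there --- the $\alpha$ of that proposition maps $\Delta_n\to\Delta_n(j)$, and the algebra structure on $\Delta_{n+1}$ with its unit $\Delta_n\to\Delta_{n+1}$ is established only later, in Proposition~\ref{abDelta}. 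This is harmless for the formal argument, since the partition identity can be verified directly at an integer $t>2(n+1)$ with no appeal to any pre-existing algebra structure on $\Delta_{n+1}$, but as written the heuristic borrows a fact proved afterwards. (Also note the paper records only the associativity equation; your additional unit check is fine but goes beyond what the paper verifies here.)
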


\begin{proof} 
We are required to show the following equation holds in $\uRep_0(S_t)$:
\begin{equation}\label{Dplus1}\begin{array}{l}
x_{n+1}(\mu_n\otimes\id_{[pt]})((x_n\mu_n(x_n\otimes x_n))\otimes x_{n+1})\\
=x_{n+1}(\mu_n\otimes\id_{[pt]})(x_n\otimes(x_{n+1}(\mu_n\otimes\id_{[pt]})(x_n\otimes x_{n+1})).
\end{array}
\end{equation}  Now use Proposition \ref{polyt} and \cite[Theorem 2.6 and Equation (2.6)]{CO}.
\end{proof}

The following lemma %concerning the $\Delta_n$-modules defined above 
will be important for us later.

\begin{lemma}\label{Deltapt} $\Delta_n\otimes[pt]\cong\Delta_{n+1}\oplus\Delta_n(1)\oplus\cdots\oplus\Delta_n(n)$ in the category $\Delta_n$-mod.
\end{lemma}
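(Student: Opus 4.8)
The plan is to decompose $\Delta_n\otimes[pt]$ by producing an explicit complete set of orthogonal idempotents in $\End_{\Delta_n\text{-mod}}(\Delta_n\otimes[pt])$, or equivalently (since all the modules in sight are "free of rank one" twisted by the $\Theta$-operations) to work inside $\End_{\uRep(S_t)}([A_{n+1}])$ with the idempotent $x_n\otimes x_{[pt]}$ cut out. First I would identify $\Delta_n\otimes[pt]$ as the image of the idempotent $x_n\otimes\id_{[pt]}$ acting on $[A_n]\otimes[pt]\cong[A_{n+1}]$; here I use that $[A_n]\otimes[pt]=[A_{n+1}]$ canonically and that $x_n$ is an idempotent equal to its dual (Proposition \ref{xidemp}). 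The natural candidates for the summands are exactly the objects already constructed: $\Delta_{n+1}$ (image of $x_{n+1}$) and $\Delta_n(j)$ for $1\le j\le n$ (image of $x_{n,j}=\Theta_j(x_n)$). So the combinatorial heart of the argument is to check that, inside $\End([A_{n+1}])$, one has
\begin{equation}\label{Didemp}
x_n\otimes\id_{[pt]}=x_{n+1}+\sum_{j=1}^n e_j
\end{equation}
for suitable idempotents $e_j$ conjugate (in the appropriate sense) to $x_{n,j}$, with all $n+1$ summands mutually orthogonal.

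The second step is to pin down the $e_j$. The map $\alpha=x_{n,j}\Theta^{A_n}_j(x_n)x_n:\Delta_n\to\Delta_n(j)$ and its partner $x_n\Theta^j_{A_n}(x_n)x_{n,j}$ should, after composing with the unit/counit of the $[pt]$-factor, give an explicit split injection $\Delta_n(j)\hookrightarrow\Delta_n\otimes[pt]$ and a corresponding projection; the composite projection-then-inclusion is the idempotent $e_j\in\End(\Delta_n\otimes[pt])$. Concretely, $e_j$ is represented by the partition obtained from $\id_{A_n}$ by gluing the two extra strands ($n{+}1$ on the top and bottom copies) to the $j$-th strand — i.e. a "contraction at position $j$" dressed with $x_n$ on either side. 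Likewise $x_{n+1}$ viewed inside $\End([A_{n+1}])$ and then restricted via $x_n\otimes\id$ is the $(n{+}1)$-st summand. Verifying \eqref{Didemp} and the orthogonality relations $e_i e_j=\delta_{ij}e_j$, $e_j x_{n+1}=x_{n+1}e_j=0$, $x_{n+1}(x_n\otimes\id)=x_{n+1}=(x_n\otimes\id)x_{n+1}$, and that each $e_j$ realizes $\Delta_n(j)$ are all identities between $F$-linear combinations of partitions whose coefficients are independent of $t$ (they are built from $\mu_n$, $\id$, and the $\Theta$'s, none of which introduce $t$-dependence). Hence, by Proposition \ref{polyt}, it suffices to check them for infinitely many $t$ — and for $t$ a large integer, $\uRep_0(S_t)\to\Rep(S_t)$ is faithful in the relevant range, where $\Delta_n$ is (a summand matching) the "genuine" permutation-module-with-distinct-entries object and the decomposition is the classical branching/restriction statement: adjoining one more coordinate either keeps it generic ($\Delta_{n+1}$) or forces the new coordinate to coincide with one of the $n$ existing ones ($\Delta_n(j)$), which by Proposition \ref{Deltaj}(2) is isomorphic to $\Delta_n$. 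This gives the classical identity, and Proposition \ref{polyt} propagates it to all $t\in F$ and hence, by taking images of idempotents, to $\uRep(S_t)$.

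The main obstacle I anticipate is \emph{not} the classical $t\gg0$ computation — that is a routine check using \cite[Theorem 2.6 and Equation (2.2)]{CO} — but rather correctly writing the idempotents $e_j$ so that \eqref{Didemp} holds \emph{on the nose} (not merely up to conjugation by a unit), since the two extra strands on the top and bottom of $[A_{n+1}]$ must be handled symmetrically and the bookkeeping of which strand gets contracted and in what order the $x_n$'s are applied is delicate. A secondary subtlety is making sure the resulting splitting is a splitting \emph{in $\Delta_n$-mod}, i.e. that $\alpha$, $\beta$, $\psi$ and the comparison maps are $\Delta_n$-module maps; but this is exactly what Propositions \ref{Deltaj} and the proposition defining $\psi$ already supply, so once the idempotent decomposition \eqref{Didemp} is established in $\uRep(S_t)$ and checked to consist of $\Delta_n$-module endomorphisms, the $\Delta_n$-module isomorphism $\Delta_n\otimes[pt]\cong\Delta_{n+1}\oplus\bigoplus_{j=1}^n\Delta_n(j)$ follows immediately, using Proposition \ref{Deltaj}(2) to rewrite each $e_j$-summand as $\Delta_n(j)$ as desired.
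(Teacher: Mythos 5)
Your proposal is essentially the paper's proof: the key orthogonal idempotent decomposition $x_n\otimes\id_{[pt]}=x_{n+1}+\sum_{j}x_{n,j}$ together with the orthogonality relations (the paper's equation~\eqref{ortho}) is exactly what you identify as the combinatorial heart, and the $e_j$ you describe are precisely the $x_{n,j}=\Theta_j(x_n)$ already defined (no conjugation needed). The reduction to large integer $t$ via Proposition~\ref{polyt} and the final check that the splitting maps are $\Delta_n$-module morphisms are also exactly the steps the paper carries out.
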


\begin{proof}  
First, using Proposition \ref{polyt} and \cite[Theorem 2.6 and Equation (2.6)]{CO} it is easy to show that the following identities hold in $\uRep_0(S_t)$:
\begin{equation}\label{ortho}
\begin{array}{ll}
x_n\otimes\id_{[pt]}=x_{n+1}+\sum\limits_{1\leq j\leq n}x_{n, j},\\
x_{n,j}x_{n+1}=0=x_{n+1}x_{n,j} & (1\leq j\leq n),\\
x_{n,j}x_{n,k}=\delta_{j,k}x_{n,j} & (1\leq j,k\leq n).
\end{array}
\end{equation}
Next, define $\Psi:\Delta_n\otimes[pt]\to\Delta_{n+1}\oplus\Delta_n(1)\oplus\cdots\oplus\Delta_n(n)$ by $$\Psi=\left[
\begin{array}{c}
x_{n+1}(x_n\otimes \id_{[pt]})\\
x_{n,1}(x_n\otimes \id_{[pt]})\\
\vdots\\
x_{n,n}(x_n\otimes \id_{[pt]})
\end{array}
\right].$$
Using (\ref{ortho}) is is easy to check that $\Psi$ is an isomorphism in $\uRep(S_t)$ with inverse $$\Psi^{-1}=\left[
\begin{array}{cccc}
(x_n\otimes \id_{[pt]})x_{n+1} &
(x_n\otimes \id_{[pt]})x_{n,1}&
\cdots&
(x_n\otimes \id_{[pt]})x_{n,n}
\end{array}
\right].$$
It remains to show that $\Psi$ and $\Psi^{-1}$ are are morphisms in the category $\Delta_n$-mod.  Showing $\Psi$ is a morphism in $\Delta_n$-mod amounts to showing the following equations hold in $\uRep_0(S_t)$:
\begin{equation}\label{Psi}
\begin{array}{l}
x_{n+1}(\mu_n\otimes\id_{[pt]})(x_n\otimes(x_{n+1}(x_n\otimes\id_{[pt]})))=x_{n+1}((x_n\mu_n(x_n\otimes x_n))\otimes\id_{[pt]}),\\
x_{n,j}\Theta_j(\mu_n)((x_{n,j}\Theta^{A_n}_j(x_n)x_n)\otimes(x_{n,j}(x_n\otimes\id_{[pt]})))\\
~\hspace{1.65in}=x_{n,j}((x_n\mu_n(x_n\otimes x_n))\otimes\id_{[pt]})\qquad(1\leq j\leq n).
\end{array}
\end{equation}
To show the equations in (\ref{Psi}) hold, use Proposition \ref{polyt} and \cite[Theorem 2.6 and Equation (2.6)]{CO}.  The proof for $\Psi^{-1}$ is similar.  
\end{proof}

\subsection{The category $\uRep^{\Delta_n}(S_t)$} Let $\Delta_n$-mod$_0$ denote the full subcategory of $\Delta_n$-mod such that a $\Delta_n$-module $M$ is in $\Delta_n$-mod$_0$ if and only if $M\cong\Delta_n\otimes Y$ in $\Delta_n$-mod for some $Y\in\uRep(S_t)$.  Let $\uRep^{\Delta_n}(S_t)$ denote the Karoubian envelope of $\Delta_n\text{-mod}_0$.  The advantage of working in $\uRep^{\Delta_n}(S_t)$ rather than in the category $\Delta_n$-mod is that we can give $\uRep^{\Delta_n}(S_t)$ the structure of a tensor category with relative ease.  Indeed, given $M, M'\in\Delta_n$-mod$_0$ we know $M\cong \Delta_n\otimes Y$ and $M'\cong\Delta_n\otimes Y'$ as $\Delta_n$-modules for some $Y, Y'\in\uRep(S_t)$.  Set $M\otimes_{\Delta_n}M':=\Delta_n\otimes Y\otimes Y'$.  Given $N, N'\in\Delta_n$-mod$_0$ with $N\cong \Delta_n\otimes Z$ and $N'\cong \Delta_n\otimes Z'$ and morphisms $f\in\Hom_{\Delta_n\text{-mod}_0}(M, N)$ and $g\in\Hom_{\Delta_n\text{-mod}_0}(M', N')$, write $\tilde{f}:\Delta_n\otimes Y\arup{\cong} M\arup{f}N\arup{\cong}\Delta_n\otimes Z$ and $\tilde{g}:\Delta_n\otimes Y'\arup{\cong} M'\arup{g}N'\arup{\cong}\Delta_n\otimes Z'$.  Define $f\otimes_{\Delta_n}g:M\otimes_{\Delta_n} M'\to N\otimes_{\Delta_n} N'$ to be the composition $M\otimes_{\Delta_n} M'=\Delta_n\otimes Y\otimes Y'\arup{\tilde{f}\otimes\id_{Y'}}\Delta_n\otimes Z\otimes Y'\arup{\sim}\Delta_n\otimes Y'\otimes Z\arup{\tilde{g}\otimes\id_Z}\Delta_n\otimes Z'\otimes Z\arup{\sim}\Delta_n\otimes Z\otimes Z'=N\otimes_{\Delta_n}N'$.  It is easy to check that $\otimes_{\Delta_n}:\Delta_n\text{-mod}_0\times\Delta_n\text{-mod}_0\to\Delta_n\text{-mod}_0$ is a bifunctor which (with the obvious choice of constraints) makes $\Delta_n$-mod$_0$ into a rigid symmetric tensor category.  The tensor structure on $\Delta_n$-mod$_0$ extends in an obvious way to make $\uRep^{\Delta_n}(S_t)$ a rigid symmetric tensor category too.  

Notice that $\Delta_{n+1}$ is an object in $\uRep^{\Delta_n}(S_t)$.  Indeed,
by Lemma \ref{Deltapt}, the $\Delta_n$-module $\Delta_{n+1}$ is the image of an idempotent of the form $\Delta_n\otimes[pt]\to\Delta_n\otimes[pt]$.  This idempotent is an element of $\End_{\Delta_n\text{-mod}_0}(\Delta_n\otimes[pt])$; hence its image is an object in the Karoubian category $\uRep^{\Delta_n}(S_t)$.   The next two propositions concern the structure of $\Delta_{n+1}\in\uRep^{\Delta_n}(S_t)$.  We start by computing its dimension:

\begin{proposition}\label{Deltadim} $\dim_{\uRep^{\Delta_n}(S_t)}(\Delta_{n+1})=t-n$.
\end{proposition}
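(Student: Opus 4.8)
The plan is to compute the dimension of $\Delta_{n+1}$ as an object of $\uRep^{\Delta_n}(S_t)$ by relating it to a dimension computation in $\uRep(S_t)$ itself, and then to evaluate the latter using the block/tensor-power combinatorics already available. Recall that the dimension of an object $M = \Delta_n \otimes Y$ in the rigid symmetric tensor category $\uRep^{\Delta_n}(S_t)$ is, essentially by the definition of $\otimes_{\Delta_n}$ and of duals there, computed by the categorical trace of $\id_M$ relative to $\Delta_n$; concretely, if $\dim_{\uRep(S_t)}(\Delta_n) \ne 0$ then $\dim_{\uRep^{\Delta_n}(S_t)}(M)$ is obtained from $\dim_{\uRep(S_t)}(\Delta_n \otimes Y)$ divided by $\dim_{\uRep(S_t)}(\Delta_n)$. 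So the first step is to pin down this relationship precisely: the evaluation and coevaluation for $\Delta_n \otimes Y$ in $\uRep^{\Delta_n}(S_t)$ factor through those for $Y$ in $\uRep(S_t)$ together with the algebra multiplication and trace of $\Delta_n$, giving $\dim_{\uRep^{\Delta_n}(S_t)}(\Delta_n \otimes Y) = \dim_{\uRep(S_t)}(\Delta_n \otimes Y)/\dim_{\uRep(S_t)}(\Delta_n)$ whenever the denominator is invertible.

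Next I would apply this with the specific idempotent decomposition from Lemma~\ref{Deltapt}: $\Delta_n \otimes [pt] \cong \Delta_{n+1} \oplus \Delta_n(1) \oplus \cdots \oplus \Delta_n(n)$ in $\Delta_n$-mod, and by Proposition~\ref{Deltaj}(2) each $\Delta_n(j) \cong \Delta_n$ as a $\Delta_n$-module. Taking dimensions in $\uRep^{\Delta_n}(S_t)$ and using additivity over direct sums gives
\begin{equation*}
\dim_{\uRep^{\Delta_n}(S_t)}(\Delta_n \otimes [pt]) = \dim_{\uRep^{\Delta_n}(S_t)}(\Delta_{n+1}) + n,
\end{equation*}
since $\dim_{\uRep^{\Delta_n}(S_t)}(\Delta_n) = 1$ (the unit object) and hence each $\Delta_n(j)$ contributes $1$. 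On the other hand, by the displayed formula relating the two dimensions, $\dim_{\uRep^{\Delta_n}(S_t)}(\Delta_n \otimes [pt]) = \dim_{\uRep(S_t)}(\Delta_n \otimes [pt])/\dim_{\uRep(S_t)}(\Delta_n)$. Now $\dim_{\uRep(S_t)}([pt]) = t$ (this is property (c), since $[pt]$ satisfies (a), (b), (c) with parameter $t$), so $\dim_{\uRep(S_t)}(\Delta_n \otimes [pt]) = t \cdot \dim_{\uRep(S_t)}(\Delta_n)$, and the ratio is simply $t$. Combining, $\dim_{\uRep^{\Delta_n}(S_t)}(\Delta_{n+1}) = t - n$, as claimed.

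There is a genuine subtlety to address: the argument via division by $\dim_{\uRep(S_t)}(\Delta_n)$ is only valid when that quantity is nonzero in $F$, and for many values of $t$ (indeed for all $t = d \in \BZ_{\ge 0}$ in the ranges relevant to the blocks) it can vanish. The clean way around this is to first establish the identity $\dim_{\uRep^{\Delta_n}(S_t)}(\Delta_{n+1}) = t-n$ for all $t$ in a Zariski-dense subset of $F$ — for instance all sufficiently large integers $t$, where $\uRep(S_t)$ is semisimple and $\dim(\Delta_n) \ne 0$ — and then invoke Proposition~\ref{polyt}-style reasoning: the dimension is the scalar by which a composite of $t$-independent partition morphisms (the evaluation--coevaluation composite defining the trace of the idempotent cutting out $\Delta_{n+1}$ inside $[A_n]\otimes[pt]$) acts on $\be = [\varnothing]$, hence is a polynomial in $t$, and two polynomials agreeing on infinitely many values agree identically. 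I expect this polynomiality-and-specialization step to be the main obstacle — not because it is deep, but because it requires being careful that "dimension of $\Delta_{n+1}$ in $\uRep^{\Delta_n}(S_t)$" really is computed by a fixed $t$-independent diagram of partitions evaluated at $t$, which means unwinding the definitions of $\otimes_{\Delta_n}$, of the duality in $\uRep^{\Delta_n}(S_t)$, and of the idempotent from Lemma~\ref{Deltapt} to see that everything reduces to morphisms in $\uRep_0(S_t)$ with $t$-independent coefficients. Once that bookkeeping is in place, the computation collapses to the two lines above.
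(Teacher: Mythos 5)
Your first step is exactly the paper's: the decomposition of Lemma~\ref{Deltapt} together with Proposition~\ref{Deltaj}(2) gives $\dim_{\uRep^{\Delta_n}(S_t)}(\Delta_{n+1}) = \dim_{\uRep^{\Delta_n}(S_t)}(\Delta_n\otimes[pt]) - n\cdot\dim_{\uRep^{\Delta_n}(S_t)}(\Delta_n)$, and $\Delta_n$ is the unit of $\uRep^{\Delta_n}(S_t)$ so its dimension is $1$. Where you diverge is in evaluating $\dim_{\uRep^{\Delta_n}(S_t)}(\Delta_n\otimes[pt])$. You do it via the ratio $\dim_{\uRep(S_t)}(\Delta_n\otimes[pt])/\dim_{\uRep(S_t)}(\Delta_n)$, which forces you into the degeneracy problem you correctly flag (the denominator can vanish, indeed does for the values of $t$ of most interest in the paper) and then into a Proposition~\ref{polyt}-style polynomiality-and-specialization argument to repair it. That workaround can be made to go through, but you're solving a problem the paper simply avoids.

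The observation you're missing is that $\Delta_n\otimes - \colon \uRep(S_t)\to\uRep^{\Delta_n}(S_t)$ is itself a \emph{tensor functor} (with $\otimes$ on the target being $\otimes_{\Delta_n}$; this is essentially built into how $\uRep^{\Delta_n}(S_t)$ was defined), and tensor functors between rigid categories preserve categorical dimensions. Hence
\begin{equation*}
\dim_{\uRep^{\Delta_n}(S_t)}(\Delta_n\otimes[pt]) = \dim_{\uRep(S_t)}([pt]) = t
\end{equation*}
directly, for every $t$, with no division and no appeal to polynomiality. This also explains why your ratio formula is correct whenever it makes sense: it is just $\dim(\Delta_n)\dim([pt])/\dim(\Delta_n)$. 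So the two proofs compute the same two numbers and subtract; the paper's route is strictly shorter because it identifies the right structural fact (dimension-preservation under $\Delta_n\otimes-$) rather than re-deriving it generically and then specializing. Your proposal is correct, but you should recognize the "genuine subtlety" you name as a self-inflicted one.
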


\begin{proof} First, by Lemma \ref{Deltapt} and Proposition \ref{Deltaj}(2), $$\dim_{\uRep^{\Delta_n}(S_t)}(\Delta_{n+1})=\dim_{\uRep^{\Delta_n}(S_t)}(\Delta_n\otimes[pt])-n\dim_{\uRep^{\Delta_n}(S_t)}(\Delta_n).$$  Now, consider the tensor functor $\Delta_n\otimes -:\uRep(S_t)\to\uRep^{\Delta_n}(S_t)$.  Since tensor functors preserve dimension, $\dim_{\uRep^{\Delta_n}(S_t)}(\Delta_n)=\dim_{\uRep(S_t)}([\varnothing])=1$ and $\dim_{\uRep^{\Delta_n}(S_t)}(\Delta_n\otimes[pt])=\dim_{\uRep(S_t)}([pt])=t$.
\end{proof}

Our next aim is to show $\Delta_{n+1}\in\uRep^{\Delta_n}(S_t)$ satisfies (a) and (b) from \S\ref{Tabc}.  To do so, let $inc:\Delta_{n+1}\to\Delta_n\otimes[pt]$ and $proj:\Delta_n\otimes[pt]\to\Delta_{n+1}$ denote the morphisms in $\uRep^{\Delta_n}(S_t)$ determined by Lemma \ref{Deltapt}.  Moreover, let $m:(\Delta_n\otimes[pt])\otimes_{\Delta_n}(\Delta_n\otimes[pt])\to\Delta_n\otimes[pt]$ denote the morphism $\Delta_n\otimes[pt]\otimes[pt]\arup{\id_{\Delta_n}\otimes\mu_1}\Delta_n\otimes[pt]$.  Now consider the following morphisms:
\begin{equation}\label{multD}
\Delta_{n+1}\otimes_{\Delta_n}\Delta_{n+1}\arup{inc\otimes_{\Delta_n}inc}(\Delta_n\otimes[pt])\otimes_{\Delta_n}(\Delta_n\otimes[pt])\arup{m}\Delta_n\otimes[pt]\arup{proj}\Delta_{n+1},
\end{equation}
\begin{equation}\label{unitD}
\Delta_n\arup{\id_{\Delta_n}\otimes 1_1}\Delta_n\otimes[pt]\arup{proj}\Delta_{n+1}.
\end{equation}

\begin{proposition}\label{abDelta} With the multiplication and unit maps given by (\ref{multD}) and (\ref{unitD}) respectively, $\Delta_{n+1}\in\uRep^{\Delta_n}(S_t)$ satisfies (a) and (b) from \S\ref{Tabc}.
\end{proposition}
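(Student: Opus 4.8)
The plan is to exhibit $\Delta_{n+1}$ as a direct factor of the algebra $\Delta_n\otimes[pt]$ and to transport (a) and (b) across that splitting. First I would observe that the symmetric tensor functor $\Phi:=\Delta_n\otimes-\colon\uRep(S_t)\to\uRep^{\Delta_n}(S_t)$ from the proof of Proposition \ref{Deltadim} sends the commutative associative algebra $([pt],\mu_1,1_1)$ to the object $\Delta_n\otimes[pt]$ equipped with the multiplication $m$ and the unit $\id_{\Delta_n}\otimes 1_1$ used in the statement. Since a tensor functor preserves algebra objects, duals, $\ev$, $\coev$, and hence the morphism $\Tr$ of \S\ref{Tabc}, and since $[pt]$ satisfies (a), (b), (c), the object $\Delta_n\otimes[pt]$ satisfies (a) and (b) in $\uRep^{\Delta_n}(S_t)$: $m$ is commutative, associative and unital, and $\Tr\circ m$ corresponds to an isomorphism $\Delta_n\otimes[pt]\xrightarrow{\sim}(\Delta_n\otimes[pt])^\ast$.

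Second, I would upgrade the decomposition of Lemma \ref{Deltapt} to a decomposition of $\Delta_n\otimes[pt]$ as a product of algebras. Let $e_0=inc\circ proj$ and let $e_1,\dots,e_n$ be the idempotent endomorphisms of $\Delta_n\otimes[pt]$ cutting out $\Delta_n(1),\dots,\Delta_n(n)$, so $\sum_i e_i=\id$ and $e_ie_j=\delta_{ij}e_i$. Put $\iota_i=e_i\circ(\id_{\Delta_n}\otimes 1_1)\colon\Delta_n\to\Delta_n\otimes[pt]$, so $\sum_i\iota_i=\id_{\Delta_n}\otimes 1_1$. The key assertion is the identity
\[
m\circ(\iota_i\otimes_{\Delta_n}\id_{\Delta_n\otimes[pt]})=e_i\qquad(0\le i\le n),
\]
i.e.\ left multiplication in $\Delta_n\otimes[pt]$ by the ``truncated unit'' $\iota_i$ is the projector $e_i$. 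Granting it, $m(\iota_i\otimes_{\Delta_n}\iota_j)=e_i\circ\iota_j=\delta_{ij}\iota_i$, so $\iota_0,\dots,\iota_n$ form a complete orthogonal family of idempotent elements of the commutative algebra $\Delta_n\otimes[pt]$; since in a commutative algebra multiplication by an idempotent element is an idempotent endomorphism whose image is the corresponding corner subalgebra, this identifies $\Delta_n\otimes[pt]$ with the product $\prod_i e_i(\Delta_n\otimes[pt])$, and comparison with (\ref{multD}), (\ref{unitD}) (using $proj\circ inc=\id$ and $proj\circ e_0=proj$) shows that the corner algebra structure on $e_0(\Delta_n\otimes[pt])=\Delta_{n+1}$ is precisely the one defined by (\ref{multD}) and (\ref{unitD}). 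Rigidity of $\Delta_{n+1}$ is automatic since $\uRep^{\Delta_n}(S_t)$ is rigid. Finally, in any rigid symmetric tensor category a factor $B'$ of an algebra $B=B'\times B''$ that satisfies (a) again satisfies (a) (being a subobject and a quotient of $B$), and if the trace pairing of $B$ is non-degenerate then so is that of $B'$: left multiplication on $B$ is block diagonal, whence $\Tr_B=\Tr_{B'}\circ proj'+\Tr_{B''}\circ proj''$ and the trace pairing of $B$ is the orthogonal direct sum of those of $B'$ and $B''$. Taking $B=\Delta_n\otimes[pt]$ and $B'=\Delta_{n+1}$ yields (a) and (b) for $\Delta_{n+1}$.

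It remains to prove the displayed identity, which is the only genuine computation. Every morphism appearing --- $\mu_1$, $1_1$, and the idempotents $x_n,x_{n+1},x_{n,j}$ of (\ref{ortho}), hence $inc$, $proj$, $e_i$, $\iota_i$ --- is an $F$-linear combination of partitions with $t$-independent coefficients, so by Proposition \ref{polyt} it is enough to check the identity in $\uRep_0(S_d)$ for all sufficiently large integers $d$, and for such $d$ the functor $\uRep(S_d)\to\Rep(S_d)$ of \S\ref{Tabc} is faithful on the $\Hom$-spaces in question (these being $\Hom$-spaces between summands of a fixed tensor power $[pt]^{\otimes N}$), so it suffices to verify it in $\Rep(S_d)=\Rep(S_I)$ with $|I|=d$. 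There $[pt]$ is the algebra $\Fun(I)$, $x_n$ is restriction of functions to the subset $I^n_{\mathrm{dist}}\subset I^n$ of tuples with pairwise distinct entries (extended by zero), so $\Delta_n\otimes[pt]=\Fun(I^n_{\mathrm{dist}}\times I)$, and the decomposition of Lemma \ref{Deltapt} is the one into functions supported on the parts of the partition of $I^n_{\mathrm{dist}}\times I$ into $\{(\vec\jmath,i):i\notin\vec\jmath\}\cong I^{n+1}_{\mathrm{dist}}$ (the $\Delta_{n+1}$ summand) and $\{(\vec\jmath,i):i=\jmath_j\}$, $1\le j\le n$. In this picture $\iota_i$ is the indicator function of the corresponding part and multiplication by the indicator of a part of a partition is the projection onto functions supported there, namely $e_i$. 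I expect this verification --- equivalently, matching the idempotents of Lemma \ref{Deltapt} with these indicator functions, i.e.\ checking idempotency and orthogonality of the $\iota_i$ with respect to the algebra multiplication rather than to composition (which is what (\ref{ortho}) records) --- to be the main obstacle; everything else is formal.
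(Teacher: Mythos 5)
Your argument is correct but takes a genuinely different, more structural route than the paper.  The paper proves Proposition \ref{abDelta} by a head-on verification: it writes out the associativity, unit, commutativity, and non-degeneracy axioms as explicit equalities of morphisms in $\uRep_0(S_t)$ (via the realizations of $\tau$, $\nu$, $\mu_{\Delta_{n+1}}$, $1_{\Delta_{n+1}}$, $\Tr$ in (\ref{taunu})--(\ref{1mb})) and then checks each of them for large integer $t$ using Proposition \ref{polyt} and [CO,~Thm.~2.6 and Eq.~(2.6)].  You instead observe that the tensor functor $\Delta_n\otimes-$ automatically endows $\Delta_n\otimes[pt]$ with a structure satisfying (a) and (b), promote the direct-sum decomposition of Lemma \ref{Deltapt} to a Peirce decomposition of the commutative algebra $\Delta_n\otimes[pt]$ by a complete orthogonal family of idempotent \emph{elements} $\iota_i$, identify (\ref{multD}) and (\ref{unitD}) as the corner-algebra structure on the factor $\iota_0(\Delta_n\otimes[pt])=\Delta_{n+1}$, and finish by the general fact that a factor of an algebra satisfying (a),(b) again satisfies (a),(b) (the trace pairing decomposes orthogonally because left multiplication by $e_0B$ kills $e_1B\oplus\cdots\oplus e_nB$).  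This collapses the five equations the paper checks into a single identity, $m\circ(\iota_i\otimes_{\Delta_n}\id)=e_i$, and you are right that this is exactly the point where some honest computation remains: (\ref{ortho}) records compositional orthogonality of the $x_{n,j}$, not multiplicative orthogonality of the $\iota_i$, so this is genuinely new content.  Your verification plan for that identity --- reduce via Proposition \ref{polyt} to large integers $d$, pass to $\Rep(S_d)$ where the $\Hom$-spaces in question are detected faithfully, and read it off as ``multiplying by the indicator function of a block is the projection onto that block'' in $\Fun(I^n_{\mathrm{dist}}\times I)$ --- is sound and is essentially the same mechanism hiding inside the paper's appeal to [CO,~Thm.~2.6].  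Net effect: you trade a longer but mechanical list of checks for a shorter list plus a layer of general nonsense (transport of structure along tensor functors, Peirce decomposition, orthogonality of the trace form on a product).  Both proofs leave comparable residual computation to the reader; yours makes the \emph{reason} the axioms hold more visible, while the paper's is more self-contained in that it never needs to invoke the functor $\Delta_n\otimes-$ or factorizations of algebras.  One small polish: for the proposition only the $i=0$ case of your key identity is needed (once $\iota_0$ is an idempotent element, $1_B-\iota_0$ is too, and $e_0B$ is automatically a factor), though stating it for all $i$ costs nothing.
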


\begin{proof}  
Write $\mu_{\Delta_{n+1}}$ and $1_{\Delta_{n+1}}$ for the morphisms given by (\ref{multD}) and (\ref{unitD}) respectively.   First, it is easy to see that $m$ (resp. $\id_{\Delta_n}\otimes 1_1$) is a morphism of $\Delta_n$-modules.  Hence, $\mu_{\Delta_{n+1}}$ (resp. $1_{\Delta_{n+1}}$) is a morphism of $\Delta_n$-modules too.  Now, to show $\Delta_{n+1}$ satisfies (a) from \S\ref{Tabc} we must show the following equations hold in $\uRep^{\Delta_n}(S_t)$:
\begin{equation}\label{a}\begin{array}{c}
\mu_{\Delta_{n+1}}(\mu_{\Delta_{n+1}}\otimes_{\Delta_{n}}\id_{\Delta_{n+1}})=\mu_{\Delta_{n+1}}(\id_{\Delta_{n+1}}\otimes_{\Delta_{n}}\mu_{\Delta_{n+1}}),\\
\mu_{\Delta_{n+1}}(1_{\Delta_{n+1}}\otimes_{\Delta_{n}}\id_{\Delta_{n+1}})=\id_{\Delta_{n+1}}=\mu_{\Delta_{n+1}}(\id_{\Delta_{n+1}}\otimes_{\Delta_{n}}1_{\Delta_{n+1}}),\\
\mu_{\Delta_{n+1}}\beta_{\Delta_{n+1},\Delta_{n+1}}=\mu_{\Delta_{n+1}},
\end{array}
\end{equation} 
where $\beta_{\Delta_{n+1},\Delta_{n+1}}:\Delta_{n+1}\otimes_{\Delta_n}\Delta_{n+1}\to\Delta_{n+1}\otimes_{\Delta_n}\Delta_{n+1}$ denotes the braiding morphism.   To do so, first notice that by (\ref{ortho}) the morphisms $proj, inc$, and $\id_{\Delta_{n+1}}$ are all given by $x_{n+1}$.  
Let $\tau$ (resp. $\nu$) denote the identity morphism on $\Delta_{n+1}\otimes_{\Delta_n}\Delta_{n+1}$ (resp. $\Delta_{n+1}\otimes_{\Delta_n}\Delta_{n+1}\otimes_{\Delta_n}\Delta_{n+1}$).  Then, by the definition of $\otimes_{\Delta_n}$, we have the following realizations of $\tau$ and $\nu$ as morphisms in $\uRep_0(S_t)$:
 \begin{equation}\label{taunu}
\begin{array}{l}
\tau=(x_n\otimes\beta_{1,1})(x_{n+1}\otimes\id_{[pt]})(x_n\otimes\beta_{1,1})(x_{n+1}\otimes\id_{[pt]}),\\
\nu=(x_n\otimes\beta_{1,2})(x_{n+1}\otimes\id_{[pt]\otimes[pt]})(x_n\otimes\beta_{2,1})(\tau\otimes\id_{[pt]}),
\end{array}
\end{equation} where $\beta_{n,m}:A_n\otimes A_m\to A_m\otimes A_n$ denotes the braiding morphism in $\uRep_0(S_t)$ for each $n,m\geq 0$.
Moreover,  \begin{equation}\label{1mb}
1_{\Delta_{n+1}}=x_{n+1}(x_n\otimes 1_1),\quad\mu_{\Delta_{n+1}}=x_{n+1}(x_n\otimes\mu_1)\tau,\quad \beta_{\Delta_{n+1},\Delta_{n+1}}=\tau(x_n\otimes\beta_{1,1})\tau.
\end{equation}
%Using Proposition \ref{polyt} and \cite[Theorem 2.6 and Equation (2.6)]{CO} it is easy to show the equation above yields \begin{equation}\mu_{\Delta_{n+1}}=\Theta_{A_{n+1}}^{n+1}(x_{n+1}).\end{equation}
%
%
Thus, showing the equations in (\ref{a}) hold in $\uRep^{\Delta_n}(S_t)$ amounts to showing the following equations hold in $\uRep_0(S_t)$:
\begin{equation*}\label{azero}
\begin{array}{l}
x_{n+1}(x_n\otimes\mu_1)\tau(x_n\otimes\beta_{1,1})(x_{n+1}\otimes\id_{[pt]})(x_n\otimes\beta_{1,1})((x_{n+1}(x_n\otimes\mu_1)\tau)\otimes\id_{[pt]})\nu=\\
x_{n+1}(x_n\otimes\mu_1)\tau(x_n\otimes\beta_{1,1})((x_{n+1}(x_n\otimes\mu_1)\tau)\otimes\id_{[pt]})(x_n\otimes\beta_{1,2})(x_{n+1}\otimes\id_{[pt]\otimes[pt]})\nu,\\
x_{n+1}(x_n\otimes\mu_1)\tau(x_n\otimes\beta_{1,1})(x_{n+1}\otimes\id_{[pt]})(x_n\otimes\beta_{1,1})(x_{n+1}(x_n\otimes 1_1)\otimes\id_{[pt]})\\
=x_{n+1}=
x_{n+1}(x_n\otimes\mu_1)\tau(x_{n+1}\otimes\beta_{1,1})((x_{n+1}(x_n\otimes 1_1))\otimes\id_{[pt]})x_{n+1},\\
x_{n+1}(x_n\otimes\mu_1)\tau(x_n\otimes\beta_{1,1})\tau=x_{n+1}(x_n\otimes\mu_1)\tau.
\end{array}
\end{equation*}
All equations above are straightforward to check using Proposition \ref{polyt} and \cite[Theorem 2.6 and Equation (2.6)]{CO}.  Thus $\Delta_{n+1}$ satisfies part (a) from \S\ref{Tabc}.

To show $\Delta_{n+1}$ satisfies part (b) from \S\ref{Tabc}, first notice that $\Delta_{n+1}\in\uRep^{\Delta_n}(S_t)$ is self dual (because the morphism $x_{n+1}$ is self dual).   Hence, we are required to show that the following morphism is invertible in $\uRep^{\Delta_n}(S_t)$:
\begin{equation}\label{b}
((\Tr~\mu_{\Delta_{n+1}})\otimes_{\Delta_n}\id_{\Delta_{n+1}})(\id_{\Delta_{n+1}}\otimes_{\Delta_n}coev_{\Delta_{n+1}}):\Delta_{n+1}\to\Delta_{n+1},
\end{equation}
where the morphism $\Tr:\Delta_{n+1}\to\Delta_n$ is defined in \S\ref{Tabc}(b).  In fact, we claim the morphism in (\ref{b}) is equal to the identity morphism $\id_{\Delta_{n+1}}$.  To prove this claim, first notice that 
\begin{equation}
\Tr=ev_{\Delta_{n+1}}\beta_{\Delta_{n+1}, \Delta_{n+1}}(\mu_{\Delta_{n+1}}\otimes_{\Delta_{n}}\id_{\Delta_{n+1}})(\id_{\Delta_{n+1}}\otimes_{\Delta_{n}}coev_{\Delta_{n+1}}).
\end{equation}
Also, $ev_{\Delta_{n+1}}=x_n(x_n\otimes ev_{[pt]})\tau$ and $coev_{\Delta_{n+1}}=\tau(x_n\otimes coev_{[pt]})x_n$.  Hence, using (\ref{taunu}), (\ref{1mb}), and the definition of $\otimes_{\Delta_n}$, we can realize the morphism in (\ref{b}) as a morphism in $\uRep_0(S_t)$.  Now use Proposition \ref{polyt} and \cite[Theorem 2.6 and Equation (2.6)]{CO} to show that this morphism is equal to $x_{n+1}$.
\end{proof}

\subsection{Deligne's lemma}\label{DLsec} Fix an integer $d\geq0$.   Set $\Delta=\Delta_{d+1}\in\uRep(S_d)$ and $\Delta^+=\Delta_{d+2}\in\uRep^\Delta(S_d)$.   By Proposition \ref{Deltadim}, $\dim_{\uRep^\Delta(S_d)}(\Delta^+)=-1$.  Hence, by Propositions \ref{uRepuni} and \ref{abDelta}, there exists a tensor functor $\cat{F}_\Delta:\uRep(S_{-1})\to\uRep^\Delta(S_d)$ with $\cat{F}_\Delta([pt])=\Delta^+$. Let $\uRes^{S_d}_{S_{-1}}$   denote the tensor functor $\uRep(S_d)\to\uRep(S_{-1})$ described in \S \ref{222}, i.e.~the functor prescribed by Proposition \ref{uRepuni} with $\uRes_{S_{-1}}^{S_d}([pt])=[pt]\oplus[\varnothing]^{\oplus d+1}$.  Then we have the following 

\begin{lemma}\label{Delemma} The functor $\Delta\otimes-:\uRep(S_d)\to\uRep^\Delta(S_d)$ is isomorphic to the composition $\cat{F}_\Delta\circ\uRes^{S_d}_{S_{-1}}$.
\end{lemma}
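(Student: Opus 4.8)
The plan is to compare both functors via the universal property of $\uRep(S_{-1})$ from Proposition \ref{uRepuni}, by showing that both functors, when precomposed in the appropriate way, correspond to the \emph{same} algebra object in $\uRep^\Delta(S_d)$. More precisely, the composition $\cat{F}_\Delta\circ\uRes^{S_d}_{S_{-1}}$ applied to $[pt]\in\uRep(S_d)$ produces the object $\cat{F}_\Delta([pt]\oplus[\varnothing]^{\oplus d+1})=\Delta^+\oplus\Delta^{\oplus d+1}$ inside $\uRep^\Delta(S_d)$ (recall that $\cat{F}_\Delta(\be)=\be$, which in $\uRep^\Delta(S_d)$ is the unit object $\Delta$). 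On the other hand, $\Delta\otimes-$ applied to $[pt]$ gives the $\Delta$-module $\Delta\otimes[pt]$. By Lemma \ref{Deltapt} we have $\Delta\otimes[pt]=\Delta_{d+1}\otimes[pt]\cong\Delta_{d+2}\oplus\Delta_{d+1}(1)\oplus\cdots\oplus\Delta_{d+1}(d+1)$ as $\Delta$-modules, and by Proposition \ref{Deltaj}(2) each $\Delta_{d+1}(j)\cong\Delta_{d+1}=\Delta$; hence $\Delta\otimes[pt]\cong\Delta^+\oplus\Delta^{\oplus d+1}$ as well. So both functors send $[pt]$ to isomorphic objects, and the first task is to upgrade this to an isomorphism of \emph{algebra objects satisfying (a), (b), (c)} of \S\ref{Tabc}.

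The key steps, in order, are as follows. First, verify that the commutative algebra structure on $[pt]^{\otimes(d+1)}=[A_{d+1}]$ restricted to $\Delta=\Delta_{d+1}$ (Proposition \ref{deltalg}) is, under the identification above, precisely the direct sum of the algebra structure on $\Delta^+=\Delta_{d+2}$ from Proposition \ref{abDelta} and the trivial algebra $\be$ on each of the $d+1$ copies of $\Delta$. Concretely, the multiplication $x_{d+1}\mu_{d+1}(x_{d+1}\otimes x_{d+1})$ on $\Delta\otimes[pt]$ as a $\Delta$-algebra should decompose along the orthogonal idempotents $x_{d+1,0}:=x_{d+2}$ and $x_{d+1,j}$ ($1\le j\le d+1$) coming from \eqref{ortho}; the block indexed by $x_{d+2}$ reproduces $\mu_{\Delta^+}$ from \eqref{multD}, while the off-diagonal and the $j$-indexed diagonal pieces assemble into the algebra $\be^{\oplus d+1}$ with its unique nondegenerate-trace structure. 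Second, check that the trace pairing and the dimension match: the $\Delta$-algebra $\Delta\otimes[pt]$ has $\dim=t=d$ (as in Proposition \ref{Deltadim}'s proof) and nondegenerate trace pairing since $[pt]$ does, and this is consistent with $\dim(\Delta^+\oplus\be^{\oplus d+1})=(-1)+(d+1)=d$. Third, invoke Proposition \ref{uni0}/\ref{uRepuni}: the object $[pt]\in\uRep(S_d)$ maps under $\cat{F}_\Delta\circ\uRes^{S_d}_{S_{-1}}$ to an object of $\uRep^\Delta(S_d)$ satisfying (a), (b), (c) which is isomorphic \emph{as such} to the image $\Delta\otimes[pt]$ under $\Delta\otimes-$; since $\Delta\otimes-$ is itself the tensor functor prescribed by $[pt]\mapsto\Delta\otimes[pt]$, the uniqueness part of the universal property forces $\Delta\otimes-\cong\cat{F}_\Delta\circ\uRes^{S_d}_{S_{-1}}$ as tensor functors.

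The main obstacle will be the second step: matching the algebra structures, not just the underlying objects. One must carefully track how the multiplication on $\Delta=\Delta_{d+1}$ interacts with the decomposition of Lemma \ref{Deltapt}, and in particular confirm that the $j$-indexed summands $\Delta_{d+1}(j)$ carry the \emph{trivial} algebra structure (coming from the unit of $\be$) rather than something more complicated — this is where the orthogonality relations \eqref{ortho} and the isomorphisms of Proposition \ref{Deltaj}(2) do the real work. As in the rest of Section 3, the cleanest way to carry this out is to reduce, via Proposition \ref{polyt}, every required identity in $\uRep^\Delta(S_d)\subset$ (images of idempotents over $\uRep_0(S_t)$) to an identity of $t$-independent linear combinations of partitions, and then verify it for all sufficiently large integer values of $t$ using \cite[Theorem 2.6 and Equation (2.6)]{CO}, where the statement becomes a concrete assertion about genuine symmetric-group representations. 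The remaining bookkeeping — unit, associativity, nondegeneracy of the pairing — is then routine and parallels the proofs of Propositions \ref{deltalg} and \ref{abDelta}.
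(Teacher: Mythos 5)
Your proposal follows the same route as the paper: invoke Proposition \ref{uRepuni} and compare the images of $[pt]$, which both come out isomorphic to $\Delta^{+}\oplus\Delta^{\oplus d+1}$, so the universal property forces an isomorphism of tensor functors. The paper's two-line proof does not spell out the compatibility of the commutative-algebra structures that you carefully flag (it cites only Propositions \ref{Deltaj}(2) and \ref{Deltapt}, which give isomorphisms of $\Delta$-modules), so your attention to that point is appropriate; just note one slip in carrying it out — the multiplication to decompose along the orthogonal idempotents of \eqref{ortho} is $m=\id_{\Delta}\otimes\mu_1$ on $\Delta\otimes[pt]$ viewed inside $\uRep^{\Delta}(S_d)$, not $x_{d+1}\mu_{d+1}(x_{d+1}\otimes x_{d+1})$, which is the multiplication on $\Delta$ itself and does not even have $\Delta\otimes[pt]$ as its source or target (its $x_{d+2}$-block then recovers $\mu_{\Delta^{+}}$ from \eqref{multD}, as you say).
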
   

\begin{proof} Both $\Delta\otimes-$ and $\cat{F}_\Delta\circ\uRes^{S_d}_{S_{-1}}$ are tensor functors which map $[pt]\in\uRep(S_d)$ to an object isomorphic to $\Delta^+\oplus\Delta^{\oplus d+1}\in\uRep^\Delta(S_d)$ (see Propositions \ref{Deltaj}(2) and \ref{Deltapt}).  Hence, by Proposition \ref{uRepuni}, they are isomorphic.
\end{proof}

The following corollary to Deligne's lemma will be used in the next section to classify tensor ideals in $\uRep(S_d)$.

\begin{corollary}\label{idid} Every nonzero tensor ideal in $\uRep(S_d)$ contains a nonzero identity morphism.
\end{corollary}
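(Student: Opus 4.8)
The plan is to exploit Deligne's Lemma \ref{Delemma} together with the semisimplicity of $\uRep(S_{-1})$. Let $\cat{I}$ be a nonzero tensor ideal in $\uRep(S_d)$, and pick a nonzero morphism $f\in\cat{I}(X,Y)$ for some objects $X,Y\in\uRep(S_d)$. First I would apply the functor $\Delta\otimes-:\uRep(S_d)\to\uRep^\Delta(S_d)$ to $f$. The key point is that $\Delta\otimes-$ is faithful: by Lemma \ref{Delemma} it factors as $\cat{F}_\Delta\circ\uRes^{S_d}_{S_{-1}}$, and both of these are tensor functors between rigid tensor categories with simple unit object, so by Remark \ref{tensornonzero} (applied after noting $\uRes^{S_d}_{S_{-1}}$ is faithful since it is the functor $\Delta_{-1}\otimes-$ in the sense of Lemma \ref{Delemma}, or more directly since $\uRes$ lands in the semisimple category $\uRep(S_{-1})$ where faithfulness of tensor functors is automatic) they do not kill nonzero morphisms. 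Hence $\id_\Delta\otimes f\neq 0$ in $\uRep^\Delta(S_d)$.

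The next step is to transfer this back to $\uRep(S_d)$. Since $\Delta=\Delta_{d+1}$ is a commutative algebra in $\uRep(S_d)$ with unit $1_\Delta:\be\to\Delta$ and a non-degenerate trace form (Propositions \ref{deltalg}, \ref{abDelta} applied with the appropriate indices — more precisely $\Delta$ satisfies (a),(b) as an algebra in $\uRep(S_d)$), the morphism $\Tr_\Delta:\Delta\to\be$ together with the multiplication gives a splitting: $\be$ is a direct summand of $\Delta$ as a $\Delta$-module (indeed the composite $\be\xrightarrow{1_\Delta}\Delta\xrightarrow{\Tr_\Delta\circ\mu_\Delta\circ(\cdots)}\be$ can be normalized to the identity, using that $\dim\Delta=t-n\neq0$ here — wait, one must check this scalar is nonzero; for $\Delta=\Delta_{d+1}\in\uRep(S_d)$ we have $\dim\Delta=d-d=0$, so this naive argument fails and I must be more careful). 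The correct approach: work with the unit object of $\uRep^\Delta(S_d)$, which is $\Delta$ itself, and observe that $\id_\Delta\otimes f\neq0$ means, spelling out the definition of $\otimes_\Delta$ and the equivalence $\Delta$-mod$_0$, that a certain morphism in $\uRep_0(S_d)$ built from $f$ and $x_{d+1}$ is nonzero. The cleanest route is: $\Hom_{\uRep^\Delta(S_d)}(\Delta\otimes X,\Delta\otimes Y)=\Hom_{\uRep(S_d)}(X,\Delta\otimes Y)=\Hom_{\uRep(S_d)}(X, Y')$ where $Y'$ is a direct summand of $Y\otimes[pt]^{\otimes(d+1)}$-type object, so $\id_\Delta\otimes f$ nonzero forces some component $g\circ f\circ h$ with $g,h$ built from the algebra structure to be nonzero — and such a composite lies in $\cat{I}$ by the ideal property (closure under pre- and post-composition with arbitrary morphisms).

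From a nonzero element of $\cat{I}$ that I have relocated, the final step is standard: in any tensor category, if $\cat{I}$ contains a nonzero morphism $\phi:U\to V$, then it contains a nonzero identity morphism. Here I would use rigidity: $\cat{I}$ contains $\id_U\otimes\phi$, hence (composing with (co)evaluations, which are morphisms in $\uRep(S_d)$) it contains morphisms $\be\to V\otimes U^*$ and $U\otimes V^*\to\be$ whose composite, if nonzero, is a nonzero scalar hence $\id_\be\in\cat{I}$; if that composite is zero one passes to $U\otimes V^*\to\be\to V\otimes U^*$, whose nonvanishing follows from $\phi\neq0$ together with injectivity of the relevant composition — concretely, $\phi\neq0$ means $(\ev_V)(\phi\otimes\id_{V^*})\neq 0$ or one of finitely many such nondegeneracy statements, each of which exhibits a nonzero endomorphism of $\be$ or of an object in $\cat{I}$. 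Actually the slickest version: $\cat{I}$ contains $\phi$, so it contains $\id_{U^*}\otimes\phi$ and then $\ev_V\circ(\cdots)$; chasing this through shows $\cat{I}$ contains a nonzero element of some $\End(Z)$, and since $\cat{I}$ is closed under $\otimes\id$ and composition, $\cat{I}$ contains $\id_Z$ for the indecomposable summand $Z$ on which that element acts invertibly.

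\textbf{Main obstacle.} The delicate point, which I flagged above, is that $\Delta=\Delta_{d+1}\in\uRep(S_d)$ has dimension $d-d=0$, so $\be$ is \emph{not} a direct summand of $\Delta$ and the functor $\Delta\otimes-$, while faithful, does not obviously reflect the property ``is in a tensor ideal'' in a one-line way. I expect the real work is to argue that $\id_\Delta\otimes f\neq 0$ in $\uRep^\Delta(S_d)$ implies that the specific morphism $(\mu_\Delta$-and-$\Tr_\Delta)$-sandwich of $f$ — which by construction factors through $f$ and therefore lies in $\cat{I}$ — is nonzero in $\uRep(S_d)$; equivalently, that the unit adjunction for the free-module functor $\Delta\otimes-$ does not destroy the nonvanishing. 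This should follow from unwinding the explicit description of $\Hom$-spaces in $\uRep^\Delta(S_d)$ via the idempotent $x_{d+1}$ and the orthogonality relations \eqref{ortho}, but it requires care precisely because the relevant scalars (dimensions) vanish at $t=d$.
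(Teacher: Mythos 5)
The ingredients you reach for — Lemma \ref{Delemma} and the semisimplicity of $\uRep(S_{-1})$ — are exactly the right ones, and this is indeed the route the paper takes. But your proposal has a genuine gap, and the obstacle you flagged as the main one is not the real issue.

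First, the false general claim. Your last step asserts: \emph{in any tensor category, a nonzero tensor ideal contains a nonzero identity morphism}, and proposes to prove it by a rigidity/evaluation chase culminating in ``$\cat{I}$ contains $\id_Z$ for the indecomposable summand $Z$ on which that element acts invertibly.'' This is false: a nonzero endomorphism of an indecomposable $Z$ may well be nilpotent and act invertibly on no summand, and an evaluation of a nonzero $\phi$ against a coevaluation may perfectly well vanish (that is exactly what failing non-degeneracy of a trace pairing looks like). The statement you are trying to invoke is essentially what Corollary \ref{idid} is; it relies on the special structure of $\uRep(S_d)$ and cannot be obtained by formal rigidity nonsense.

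Second, the ``main obstacle'' you identified is a red herring, and noticing this is what closes the argument. You pass to $\uRep^\Delta(S_d)$ and then worry about transporting a nonzero morphism back to $\uRep(S_d)$, correctly observing that $\dim\Delta=0$ ruins any splitting of $\be\to\Delta\to\be$. But no transfer back is needed: $\id_\Delta\otimes f$ is already a morphism in $\uRep(S_d)$ and already lies in $\cat{I}$ (closure under $\otimes\id$ plus the symmetry). The only thing $\uRep^\Delta(S_d)$ is used for is to see the \emph{shape} of this morphism. Lemma \ref{Delemma} identifies $\id_\Delta\otimes f$ (after forgetting the $\Delta$-module structure, which is harmless) with $\cat{F}_\Delta(f')$ for $f'=\uRes^{S_d}_{S_{-1}}(f)$, and since $\uRep(S_{-1})$ is semisimple, $f'$ decomposes as a direct sum of isomorphisms and zero morphisms; applying the additive functor $\cat{F}_\Delta$ and forgetting to $\uRep(S_d)$, so does $\id_\Delta\otimes f$. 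Since this morphism is nonzero (Remark \ref{tensornonzero}), one of those summands is a nonzero isomorphism, and it is a pre-/post-composition of $\id_\Delta\otimes f$ with the evident inclusion and projection, hence lies in $\cat{I}$; composing with its inverse gives a nonzero identity in $\cat{I}$. The crucial dividend of semisimplicity is thus not mere faithfulness of $\Delta\otimes-$ (which is all you extract from it) but the much stronger fact that $\id_\Delta\otimes f$ is forced to be a direct sum of isomorphisms and zeros — which is precisely what your last paragraph could not manufacture out of thin air.
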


\begin{proof} Suppose $\cat{I}$ is a nonzero tensor ideal in $\uRep(S_d)$. 
Since tensor ideals are closed under composition, it suffices to show that $\cat{I}$ contains a 
morphism which has a nonzero isomorphism as a direct summand.   Let $f$ be a nonzero morphism in $\cat{I}$.  Then, by Remark \ref{tensornonzero}, $\id_\Delta\otimes f$ is also a nonzero morphism in $\cat{I}$.  By Lemma \ref{Delemma}  $\id_\Delta\otimes f=\cat{F}_\Delta(f')$ for some nonzero morphism $f'$ in $\uRep(S_{-1})$.  Since $\uRep(S_{-1})$ is semisimple  (see \cite[Th\'eor\`eme 2.18]{Del07} or \cite[Corollary 5.21]{CO}) it follows that $f'$ (and therefore $\cat{F}_\Delta(f')$) is the direct sum of isomorphisms and zero morphisms. 
\end{proof}

\subsection{Tensor ideals in $\uRep(S_d)$} In this section we use results from \cite{CO} along with Corollary \ref{idid} to classify tensor ideals in $\uRep(S_d)$ for arbitrary $d\in\Z_{\geq0}$.\footnote{If $t\not\in\Z_{\geq0}$ then $\uRep(S_t)$ is semisimple (see \cite[Th\'eor\`eme 2.18]{Del07} or \cite[Corollary 5.21]{CO}).  Hence there are no nonzero proper tensor ideals in $\uRep(S_t)$ when $t\not\in\Z_{\geq0}$.}    We begin by introducing an equivalence class on Young diagrams:

\begin{definition} Consider the weakest equivalence relation on the set of all Young diagrams such that $\lambda$ and $\mu$ are equivalent 
whenever the indecomposable object $L(\lambda)$ is a direct summand of $L(\mu)\otimes[pt]$ in $\uRep(S_d)$.  When $\lambda$ and $\mu$ are in the same equivalence class we write $\lambda\stackrel{d}{\approx}\mu$.
\end{definition}

The following proposition contains enough information on the equivalence relation $\stackrel{d}{\approx}$ for us to classify tensor ideals in $\uRep(S_d)$.

\begin{proposition}\label{green}  Assume $d$ is a nonnegative integer and $\lambda, \mu$ are Young diagrams.

(1) A nonzero morphism of the form $L(\lambda)\to L(\mu)$ is a negligible morphism in $\uRep(S_d)$ if and only if $L(\lambda)$ or $L(\mu)$ is not the minimal indecomposable object in an infinite block of $\uRep(S_d)$.

(2) $\lambda\stackrel{d}{\approx}\mu$ whenever $L(\lambda)$ and $L(\mu)$ are in trivial blocks of $\uRep(S_d)$.

(3) $\lambda\stackrel{d}{\approx}\mu$ whenever $L(\lambda)$ is a non-minimal indecomposable object in an infinite block and $L(\mu)$ is in a trivial block of $\uRep(S_d)$.

(4) $\lambda\stackrel{d}{\approx}\mu$ whenever neither $L(\lambda)$ nor $L(\mu)$ is a minimal indecomposable object in an infinite block of $\uRep(S_d)$.

(5)  Suppose $\lambda\stackrel{d}{\approx}\mu$ and $\cat{I}$ is a tensor ideal in  $\uRep(S_d)$ containing $\id_{L(\lambda)}$.  Then $\id_{L(\mu)}$ is also in $\cat{I}$.

\end{proposition}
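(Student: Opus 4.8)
The proposition has five parts; (1) and (5) are short, while (2)--(4) form the combinatorial core. Here is how I would proceed.

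\textbf{(1).} One implication is formal. If $L(\lambda)$ or $L(\mu)$ is not the minimal object of an infinite block, then by Proposition \ref{StBlockProp} it is negligible, i.e.\ its identity is a negligible morphism; since negligible morphisms form a tensor ideal, $f=f\circ\id_{L(\lambda)}$ (or $f=\id_{L(\mu)}\circ f$) is negligible. Conversely, suppose both $L(\lambda)$ and $L(\mu)$ are minimal objects of infinite blocks and $f\colon L(\lambda)\to L(\mu)$ is nonzero. Then $L(\lambda)$ and $L(\mu)$ lie in one block, and since an infinite block has a unique minimal indecomposable object we get $\lambda=\mu$. By the description of infinite blocks in \cite[\S6]{CO} one has $\End(L(\lambda))=F$, so $f=c\,\id_{L(\lambda)}$ with $c\neq0$ and $\Tr(f\circ\id_{L(\lambda)})=c\dim(L(\lambda))\neq0$ by Proposition \ref{StBlockProp}(ii); hence $f$ is not negligible.

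\textbf{(2)--(4).} By Proposition \ref{StBlockProp}, an indecomposable object of $\uRep(S_d)$ that is not the minimal object of an infinite block is precisely one lying in a trivial block or a non-minimal object of an infinite block; so (4) asserts that all such ``negligible'' indecomposables lie in a single $\stackrel{d}{\approx}$-class, and (2), (3) are the subcases in which both, respectively one, of $L(\lambda),L(\mu)$ sit in trivial blocks. The input is the explicit decomposition of $[pt]\otimes L(\lambda)$ in $\uRep(S_d)$ from \cite{CO}, together with the block classification of Theorem \ref{CObl}: $L(\lambda)$ always occurs as a summand of $[pt]\otimes L(\lambda)$, and the remaining summands $L(\mu)$ are read off combinatorially from $\lambda$ and $d$ via the sequence $\mu_\lambda(d)$. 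I would first prove (2) directly from this rule, showing that any two trivial-block diagrams are linked by a chain of such summand relations; then prove (3) by producing, for each non-minimal $L_i$ of an infinite block, a chain of summand relations that leaves the block and reaches a trivial block, using Theorem \ref{CObl} to certify that the required relation exists; and finally deduce (4) by linking each of $L(\lambda),L(\mu)$ to a trivial-block diagram (via (3) or directly) and invoking (2). I expect the main obstacle to be exactly this combinatorial bookkeeping --- verifying, uniformly over all infinite blocks, that the summands supplied by the $[pt]\otimes(-)$-rule suffice both to escape every infinite block and to traverse the trivial blocks --- and I would organize the verification around the numerology of $\mu_\lambda(d)$ and the explicit description of non-semisimple blocks from \cite{CO}.

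\textbf{(5).} This is essentially formal. Suppose $\id_{L(\lambda)}\in\cat{I}$ and $L(\mu)$ is a direct summand of $L(\lambda)\otimes[pt]$, witnessed by $\iota\colon L(\mu)\to L(\lambda)\otimes[pt]$ and $p\colon L(\lambda)\otimes[pt]\to L(\mu)$ with $p\iota=\id_{L(\mu)}$. Since $\cat{I}$ is a tensor ideal, $\id_{L(\lambda)}\otimes\id_{[pt]}\in\cat{I}$, and therefore $\id_{L(\mu)}=p\circ(\id_{L(\lambda)}\otimes\id_{[pt]})\circ\iota\in\cat{I}$. Because $[pt]$ is self-dual this argument is symmetric in $L(\lambda)$ and $L(\mu)$, so $\{\nu:\id_{L(\nu)}\in\cat{I}\}$ is a union of $\stackrel{d}{\approx}$-classes; iterating along a chain realizing $\lambda\stackrel{d}{\approx}\mu$ yields $\id_{L(\mu)}\in\cat{I}$.
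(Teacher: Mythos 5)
Parts (1) and (5) are correct. Your argument for (5) is essentially formal, as you say, and matches what the paper has in mind; your argument for (1) is a bit more self-contained than the paper's (which simply cites \cite[Proposition 3.25, Corollary 5.9, Theorem 6.10]{CO}): you reduce to a scalar endomorphism of the unique block-minimal object and use $\dim(L_0)\ne 0$ from Proposition \ref{StBlockProp}(ii), which is sound provided $\End(L_0)=F$ holds — it does, by the explicit block description in \cite[\S 6]{CO}, but you should cite that and not just assert it.

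The genuine gap is in (2)--(4), which are the combinatorial heart of the proposition: you correctly identify the overall scheme (prove (2) for trivial blocks, prove (3) to link non-minimal objects of infinite blocks to trivial blocks, deduce (4) from (2) and (3)), but you do not actually carry it out; you write ``I expect the main obstacle to be exactly this combinatorial bookkeeping'' and leave it there. In particular, your plan for (3) — ``a chain of summand relations that leaves the block and reaches a trivial block'' — glosses over the fact that tensoring a non-minimal object of an infinite block with $[pt]$ need not immediately exit the class of infinite blocks. The paper handles this by an explicit induction on infinite blocks with respect to the partial order $\prec$ from \cite[Definition 5.12]{CO}: in the $\prec$-minimal case one reaches a trivial block directly using \cite[Proposition 3.12, Lemmas 5.18(1), 5.20(1)]{CO}; otherwise one first reaches a non-minimal object of a strictly $\prec$-smaller infinite block using \cite[Lemmas 5.18(2), 5.20(2)]{CO} and inducts. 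You would need either that induction or a substitute, together with the precise summand computations from \cite[Proposition 3.12]{CO} and the relevant lemmas of \cite[\S 5]{CO}, before (2)--(4) could be considered established.
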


\begin{proof}  Part (1) follows from \cite[Proposition 3.25, Corollary 5.9, and Theorem 6.10]{CO}.  Part (2) is easy to check using \cite[Propositions 3.12,  5.15 and Lemma 5.20(1)]{CO}.  Part (4) follows from parts (2) and (3).  Part (5) is easy to check.  Hence, it suffices to prove part (3).  To do so, let $\mathsf{b}$ denote the infinite block of $\uRep(S_d)$ containing $L(\lambda)$.  We will proceed by induction on $\mathsf{b}$ with respect to $\prec$ (see \cite[Definition 5.12]{CO}).

If $\mathsf{b}$ is the minimal with respect to $\prec$, then using \cite[Proposition 3.12 and Lemmas 5.18(1) and 5.20(1)]{CO} we can find a Young diagram $\rho$ with $L(\rho)$ in a trivial block of $\uRep(S_d)$ such that $\lambda\stackrel{d}{\approx}\rho$.  By part (2) $\rho\stackrel{d}{\approx}\mu$ and we are done.    Now suppose $\mathsf{b}$ is not minimal with respect to $\prec$.  Then, using \cite[Proposition 3.12 and Lemmas 5.18(2) and 5.20(2)]{CO}, we can find a Young diagram $\rho'$ with $\lambda\stackrel{d}{\approx}\rho'$ such that $L(\rho')$ is in an infinite block $\mathsf{b}'$ of $\uRep(S_d)$ with $\mathsf{b}'\precneqq \mathsf{b}$.  By induction $\rho'\stackrel{d}{\approx}\mu$ and we are done.
\end{proof}

We are now ready to classify tensor ideals in $\uRep(S_d)$.

\begin{theorem}\label{idealclassif}
If $d$ is a nonnegative integer, then the only nonzero proper tensor ideal in $\uRep(S_d)$ is the ideal of negligible morphisms. 
\end{theorem}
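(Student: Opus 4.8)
The plan is to classify all nonzero tensor ideals and show the only proper one is the negligible ideal. Let $\cat{I}$ be a nonzero proper tensor ideal in $\uRep(S_d)$. By Corollary \ref{idid}, $\cat{I}$ contains a nonzero identity morphism $\id_{L(\lambda)}$ for some Young diagram $\lambda$; since $\cat{I}$ is proper, $\id_\be \notin \cat{I}$, so $L(\lambda)$ is not the unit object, and more to the point $\dim L(\lambda) = 0$ (otherwise one could build $\id_\be$ from $\id_{L(\lambda)}$ using rigidity and the trace pairing, contradicting properness). The strategy is then to show two things: first, that $\cat{I}$ must contain the entire negligible ideal, and second, that $\cat{I}$ cannot contain anything more.

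For the first inclusion: I would argue that $\id_{L(\lambda)} \in \cat{I}$ forces $\id_{L(\mu)} \in \cat{I}$ for every $\mu$ with $L(\mu)$ negligible. Since $L(\lambda)$ is negligible, by Proposition \ref{green}(1), $L(\lambda)$ is either in a trivial (semisimple) block, or is a non-minimal indecomposable object in an infinite block. In either case, Proposition \ref{green}(4) shows that $\lambda \stackrel{d}{\approx} \mu$ for every $\mu$ such that $L(\mu)$ is likewise not the minimal indecomposable object of an infinite block — i.e. for every negligible $L(\mu)$. Then Proposition \ref{green}(5) propagates $\id_{L(\lambda)} \in \cat{I}$ to $\id_{L(\mu)} \in \cat{I}$ for all such $\mu$. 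Since the negligible ideal is spanned (as a tensor ideal) by the identity morphisms of negligible indecomposables — any negligible morphism factors appropriately, or one invokes that the negligible objects are exactly the direct sums of these $L(\mu)$ — this shows $\cat{I}$ contains the negligible ideal.

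For the reverse inclusion, i.e. that $\cat{I}$ is contained in the negligible ideal: suppose toward a contradiction that $\cat{I}$ contains a non-negligible morphism. Composing suitably, $\cat{I}$ would then contain $\id_{L(\nu)}$ for some non-negligible indecomposable $L(\nu)$, i.e. (by Proposition \ref{green}(1)) $L(\nu) = L_0$, the minimal indecomposable object in some infinite block $\mathsf{b}$. Now I would use the structure of $\uRep(S_d)$ to derive that $\id_\be \in \cat{I}$: the object $\be$ itself lies in some infinite block (for instance, the block containing the empty diagram is infinite when $d \in \BZ_{\ge 0}$), and $\be = L(\varnothing)$ is the minimal object of its block. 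Using Proposition \ref{green}(5) together with the fact that $L_0$ of an infinite block generates — under $- \otimes [pt]$ and taking summands — objects connecting to $\be$ (one needs that the minimal objects of distinct infinite blocks are related through tensoring with $[pt]$, which follows from the block combinatorics in \cite{CO}, \S 5), we conclude $\id_\be \in \cat{I}$, contradicting properness. Hence $\cat{I}$ contains no non-negligible morphism, so $\cat{I}$ is exactly the negligible ideal.

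The main obstacle I anticipate is the second inclusion: pinning down precisely how the minimal object $L_0$ of an infinite block, once its identity lies in $\cat{I}$, forces $\id_\be \in \cat{I}$. This requires knowing that the relation $\stackrel{d}{\approx}$ (or at least the ideal-closure operation of Proposition \ref{green}(5)) connects the minimal objects of all infinite blocks — including the block of $\be$ — to each other; Proposition \ref{green} as stated only handles the non-minimal and trivial-block cases explicitly, so the argument for minimal objects needs an extra input, presumably that tensoring $L_0$ with $[pt]$ produces a summand which is non-minimal in some block (hence negligible and already covered), but also that iterating reaches every block. I would look to \cite[\S 5]{CO} — in particular the description of how blocks are linked by $-\otimes[pt]$ — to supply this, or alternatively argue directly that a non-negligible morphism in $\cat{I}$, tensored with enough copies of $[pt]$ and projected, yields a nonzero endomorphism of $\be$.
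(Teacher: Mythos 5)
Your proof of the inclusion ``negligible ideal $\subseteq \cat{I}$'' is essentially the paper's argument and is correct: from Corollary \ref{idid} you get $\id_{L(\lambda)}\in\cat{I}$, the standard rigidity/trace argument (if $\dim L(\lambda)\neq 0$ then $\id_\be = (\dim L(\lambda))^{-1}\cdot\ev_{L(\lambda)}\circ(\id_{L(\lambda)}\otimes\id_{L(\lambda)^*})\circ\coev_{L(\lambda)}\in\cat{I}$) forces $L(\lambda)$ negligible, and then Proposition \ref{green}(1), (4), (5) propagate to every negligible $\id_{L(\mu)}$.

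The gap is exactly where you flag it, in the reverse inclusion $\cat{I}\subseteq$ negligible ideal. Your route — reduce to $\id_{L_0}\in\cat{I}$ for some minimal object $L_0$ of an infinite block, then try to connect $L_0$ to $\be$ via $-\otimes[pt]$ and Proposition \ref{green}(5) — has no support in the propositions the paper proves. The relation $\stackrel{d}{\approx}$ is generated by taking summands of $-\otimes[pt]$, so an equivalence $\lambda\stackrel{d}{\approx}\varnothing$ would \emph{not} suffice on its own to put $\id_\be\in\cat{I}$: Proposition \ref{green}(5) moves identities along the $\stackrel{d}{\approx}$ relation only in the direction ``$L(\mu)$ is a summand of $L(\lambda)\otimes[pt]$, so $\id_{L(\mu)}$ sits inside $\id_{L(\lambda)}\otimes\id_{[pt]}$'', and you would need to check that one can reach $\be$ this way, not just that $\be$ lies in the same equivalence class. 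More fundamentally, Proposition \ref{green} deliberately avoids making any statement about minimal objects of infinite blocks, precisely because they are the non-negligible ones, so there is nothing in the paper's toolkit to run the $\stackrel{d}{\approx}$ machine for them.

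The fix is much shorter than your proposed detour, and you already wrote it down for the other inclusion: if $\cat{I}$ contained a non-negligible morphism, then (decomposing the source and target into indecomposables and picking out a diagonal component with nonzero trace) $\cat{I}$ would contain an invertible endomorphism and hence $\id_{L(\nu)}$ of some indecomposable $L(\nu)$ with $\dim L(\nu)\neq 0$; the same rigidity/trace identity then yields $\id_\be\in\cat{I}$, contradicting properness. This is exactly the content of \cite[Proposition 3.1]{GW}, which the paper cites at the start of the proof to establish $\cat{I}\subseteq$ negligible ideal in one line, before doing the block-combinatorics only for the forward inclusion. In short: replace the entire second half of your argument by an application of the trace argument you used in the first half (or the single citation to \cite[Proposition 3.1]{GW}), and the proof matches the paper.
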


\begin{proof}  Assume $\cat{I}$ is a nonzero proper tensor ideal of $\uRep(S_d)$.  Then $\cat{I}$ is contained in the ideal of negligible morphisms (see \cite[Proposition 3.1]{GW}), hence we must show that $\cat{I}$ contains all negligible morphisms.  Suppose $\lambda$ is a Young diagram such that $L(\lambda)$ is not the minimal indecomposable object in an infinite block of $\uRep(S_d)$.  By  Proposition \ref{green}(1), it suffices to show $\id_{L(\lambda)}$ is contained in $\cat{I}$.
By Corollary \ref{idid}, there exists a nonzero identity morphism in $\cat{I}$.  It follows that $\cat{I}$ contains $\id_{L(\mu)}$ for some Young diagram $\mu$.  In particular, $\id_{L(\mu)}$ is a negligible morphism.  Hence, by Proposition \ref{green}(1), $L(\mu)$ is not the minimal indecomposable object in an infinite block of $\uRep(S_d)$.  Thus, by Proposition \ref{green}(4), $\lambda\stackrel{d}{\approx}\mu$.  Finally, by  Proposition \ref{green}(5), $\id_{L(\lambda)}$ is contained in $\cat{I}$.
\end{proof}

\begin{corollary}\label{DeltaGen} The tensor ideal in $\uRep(S_d)$ generated by $\id_\Delta$ is the ideal of all negligible morphisms.
\end{corollary}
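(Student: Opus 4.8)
The plan is to deduce Corollary~\ref{DeltaGen} from Theorem~\ref{idealclassif} and Corollary~\ref{idid}. Let $\cat{J}$ denote the tensor ideal generated by $\id_\Delta$; I must show $\cat{J}$ coincides with the ideal $\cat{N}$ of negligible morphisms.

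First I would check $\cat{J}\subseteq\cat{N}$. By Proposition~\ref{Deltadim} applied with $n=d$ and $t=d$, we have $\dim_{\uRep^{\Delta_d}(S_d)}(\Delta_{d+1})=d-d=0$; but I actually want the dimension of $\Delta=\Delta_{d+1}$ computed inside $\uRep(S_d)$ itself. Since the tensor functor $\Delta_d\otimes-\colon\uRep(S_d)\to\uRep^{\Delta_d}(S_d)$ preserves dimension and, more to the point, the composite with the (dimension-preserving) forgetful-type reasoning shows $\dim_{\uRep(S_d)}(\Delta)=\dim_{\uRep^{\Delta_d}(S_d)}(\Delta_{d+1})=0$ (here one uses that $\Delta=\Delta_{d+1}$ is, via Lemma~\ref{Deltapt}, a summand of $[pt]^{\otimes(d+1)}$ whose dimension $t^{d+1}=d^{d+1}$ splits according to $t-n$ factors, with the $\Delta_{d+1}$-summand carrying the factor $t-d=0$). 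Hence $\id_\Delta$ is negligible, and since $\cat{N}$ is a tensor ideal containing $\id_\Delta$, we get $\cat{J}\subseteq\cat{N}$.

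Next, $\cat{J}$ is nonzero: it contains $\id_\Delta$, and $\Delta\neq0$ because $\Delta=\Delta_{d+1}$ has the honest summand decomposition of Lemma~\ref{Deltapt} (alternatively, $x_{d+1}\neq0$ in $\End([A_{d+1}])$ as follows from the faithful functor to $\Rep(S_m)$ for large integers $m$, or simply from $x_{d+1}$ being an idempotent that is nonzero since $\uRes^{S_d}_{S_{-1}}$ applied to $\Delta$ is nonzero by Lemma~\ref{Delemma}). So $\cat{J}$ is a nonzero tensor ideal contained in the proper ideal $\cat{N}$, hence $\cat{J}$ is itself proper. By Theorem~\ref{idealclassif}, the only nonzero proper tensor ideal of $\uRep(S_d)$ is $\cat{N}$, so $\cat{J}=\cat{N}$.

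The only genuine point requiring care — and the step I expect to be the main obstacle — is the clean verification that $\id_\Delta$ is negligible, i.e.\ that $\dim_{\uRep(S_d)}(\Delta)=0$; once that is in hand the result is immediate from Theorem~\ref{idealclassif}. In fact this was essentially the content of Corollary~\ref{idid}'s proof (where $\id_\Delta\otimes f$ was pushed through $\cat{F}_\Delta\circ\uRes^{S_d}_{S_{-1}}$ and $\Delta^+$ has dimension $-1\neq0$, forcing the negligibility to live entirely in the $\Delta$-direction), so I would phrase the dimension computation by invoking Lemma~\ref{Delemma} together with Proposition~\ref{Deltadim}: $\Delta\otimes[\varnothing]=\Delta$ has image $\Delta^+\oplus\Delta^{\oplus d+1}$ under $\cat{F}_\Delta\circ\uRes^{S_d}_{S_{-1}}$ is not quite the right bookkeeping; rather, one directly notes $\dim(\Delta)=\dim_{\uRep^{\Delta_d}(S_d)}(\Delta_{d+1})$ via the dimension-preserving functor $\Delta_d\otimes-$ and Proposition~\ref{Deltadim} with $t=n=d$. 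This is short, so the whole corollary is a two-line deduction.
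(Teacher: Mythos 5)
Your overall strategy matches the paper's exactly: show $\id_\Delta$ is a nonzero negligible morphism and then invoke Theorem~\ref{idealclassif}. The published proof does precisely this, but simply cites \cite[Remark~3.22]{CO} for the fact that $\id_\Delta = x_{d+1}$ is a nonzero negligible morphism, and then says the result follows from Theorem~\ref{idealclassif}. The difference is entirely in how you try to establish negligibility.

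Your derivation of $\dim_{\uRep(S_d)}(\Delta)=0$ contains a genuine error. You repeatedly assert that the dimension-preserving tensor functor $\Delta_d\otimes-\colon\uRep(S_d)\to\uRep^{\Delta_d}(S_d)$ yields $\dim_{\uRep(S_d)}(\Delta_{d+1})=\dim_{\uRep^{\Delta_d}(S_d)}(\Delta_{d+1})$. That is not what the functor gives: it sends $\Delta_{d+1}$ to the \emph{free} $\Delta_d$-module $\Delta_d\otimes\Delta_{d+1}$, which is a different object of $\uRep^{\Delta_d}(S_d)$ from the $\Delta_d$-module $\Delta_{d+1}$ appearing in Proposition~\ref{Deltadim} (the latter is the summand of $\Delta_d\otimes[pt]$ from Lemma~\ref{Deltapt}). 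So the functor only tells you $\dim_{\uRep(S_d)}(\Delta_{d+1})=\dim_{\uRep^{\Delta_d}(S_d)}(\Delta_d\otimes\Delta_{d+1})$, which is vacuous. Indeed, for general $t$ one has $\dim_{\uRep(S_t)}(\Delta_{n+1})=t(t-1)\cdots(t-n)$ while $\dim_{\uRep^{\Delta_n}(S_t)}(\Delta_{n+1})=t-n$; these are not equal. That both happen to be $0$ at $t=n=d$ is a coincidence that your reasoning does not actually establish.

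The honest route to your intended conclusion is the one hinted at by your parenthetical: forget the $\Delta_d$-module structure in Lemma~\ref{Deltapt}, using $\Delta_n(j)\cong\Delta_n$ from Proposition~\ref{Deltaj}(2), to get $\Delta_n\otimes[pt]\cong\Delta_{n+1}\oplus\Delta_n^{\oplus n}$ in $\uRep(S_t)$, whence $\dim_{\uRep(S_t)}(\Delta_{n+1})=(t-n)\,\dim_{\uRep(S_t)}(\Delta_n)$, and by induction $\dim_{\uRep(S_t)}(\Delta_{n})=t(t-1)\cdots(t-n+1)$. Setting $t=d$ and $n=d+1$ gives $\dim_{\uRep(S_d)}(\Delta)=d!\cdot 0=0$, so $\id_\Delta$ is negligible. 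This would repair the gap; but note it establishes only negligibility, not $\Delta\neq 0$ (for which your appeal to Lemma~\ref{Delemma} or Corollary~\ref{idid}, or the paper's citation, is fine). Once both facts are in hand, the deduction from Theorem~\ref{idealclassif} is exactly as you and the paper say.
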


\begin{proof} $\id_\Delta=x_{d+1}$ is a nonzero negligible morphism in $\uRep(S_d)$ (see \cite[Remark 3.22]{CO}).  Hence, the result follows from Theorem \ref{idealclassif}.
\end{proof}

\section{$t-$structure on $K^b(\uRep(S_d))$}
\subsection{Homotopy category} \label{homgen}
Let $\A$ be an additive category. Let $K^b(\A)$ be the bounded homotopy category of $\A$, see e.g.~\cite[\S 11]{KS}. Thus the objects of $K^b(\A)$ are finite complexes of objects in $\A$ and the morphisms are morphisms of  complexes up to homotopy. The category $K^b(\A)$ has a natural
structure of a triangulated category, see {\em loc.~cit.} In particular, for each integer $n$ 
we have a translation functor  $[n]: K^b(\A)\to K^b(\A)$.

Any object $A\in \A$ can be considered as a complex $A[0]$ concentrated in degree 0 or,
more generally, as a complex $A[n]$ concentrated in degree $-n$.
Thus we have a fully faithful functor $\A \to K^b(\A)$, $A\mapsto A[0]$. We will say that an
object $K\in K^b(\A)$ is {\em split} if it is isomorphic to an object of the form
$\oplus_iA_i[n_i]$ with $A_i\in \A$, $n_i\in \BZ$.

Now assume that $\A$ is an additive tensor category. 
The category $K^b(\A)$ has a natural structure of an additive tensor category. 
If the category $\A$ is braided or symmetric then so is the category $K^b(\A)$.
The functor $\A \to K^b(\A)$, $A\mapsto A[0]$ has an obvious structure of a (braided) tensor functor. 
If the category $\A$ is rigid so is the category $K^b(\A)$.

\subsection{Definition of $t-$structure} \label{deft}
We can apply the construction from \S \ref{homgen} to the case $\A =\uRep(S_d)$. 
We obtain a triangulated tensor category $\K_d:=K^b(\uRep(S_d))$.
We have

\begin{proposition} \label{split}
 For any $K\in \K_d$ the object $\Delta \ot K$ is split.
\end{proposition}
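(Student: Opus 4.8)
The plan is to reduce this statement to the structure of $\uRep(S_{-1})$ via Deligne's lemma (Lemma \ref{Delemma}). First I would observe that $\Delta\otimes-$ extends to a tensor functor on homotopy categories: since the functor $\Delta\otimes-:\uRep(S_d)\to\uRep^\Delta(S_d)$ is an additive tensor functor, it induces a tensor triangulated functor $\K_d=K^b(\uRep(S_d))\to K^b(\uRep^\Delta(S_d))$ which I will also denote $\Delta\otimes-$. Similarly $\uRes^{S_d}_{S_{-1}}:\uRep(S_d)\to\uRep(S_{-1})$ and $\cat{F}_\Delta:\uRep(S_{-1})\to\uRep^\Delta(S_d)$ induce tensor triangulated functors on the bounded homotopy categories. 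By functoriality of $K^b(-)$, the isomorphism of functors $\Delta\otimes-\cong\cat{F}_\Delta\circ\uRes^{S_d}_{S_{-1}}$ from Lemma \ref{Delemma} lifts to an isomorphism of the induced functors on $\K_d$. So for $K\in\K_d$ we have $\Delta\otimes K\cong\cat{F}_\Delta\big(\uRes^{S_d}_{S_{-1}}(K)\big)$.

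Now $\uRes^{S_d}_{S_{-1}}(K)$ is an object of $K^b(\uRep(S_{-1}))$. The key point is that $\uRep(S_{-1})$ is semisimple (\cite[Théorème 2.18]{Del07} or \cite[Corollary 5.21]{CO}), and the bounded homotopy category of a semisimple additive category splits: every bounded complex over a semisimple category is isomorphic in the homotopy category to its cohomology, i.e.\ to a direct sum $\oplus_i H^i[-i]$ with $H^i\in\uRep(S_{-1})$. Hence $\uRes^{S_d}_{S_{-1}}(K)$ is a split object of $K^b(\uRep(S_{-1}))$. Applying the tensor triangulated functor $\cat{F}_\Delta$, which commutes with finite direct sums and with the shift functor, we conclude that $\cat{F}_\Delta\big(\uRes^{S_d}_{S_{-1}}(K)\big)\cong\oplus_i\cat{F}_\Delta(H^i)[-i]$ is a split object of $K^b(\uRep^\Delta(S_d))$, and therefore so is the isomorphic object $\Delta\otimes K$.

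The main thing to be careful about — though it is not really an obstacle — is the splitting of $K^b(\A)$ for $\A$ semisimple: given a bounded complex $(C^\bullet,d^\bullet)$ over a semisimple additive category, each differential $d^i:C^i\to C^{i+1}$ factors (after choosing splittings of the surjection onto its image and of the inclusion of its image, both available by semisimplicity) so that $C^i$ decomposes as $\Ker d^i\oplus (\text{complement})$ and the complex splits as a direct sum of $H^i[-i]$ and contractible two-term complexes $0\to X\xrightarrow{\sim}X\to 0$; the latter are homotopy equivalent to zero. This is a standard fact, and I would simply cite it or dispatch it in one sentence. Everything else is formal: functoriality of $K^b$, compatibility of induced functors with the tensor structure and shifts, and transport of the isomorphism in Lemma \ref{Delemma} to the homotopy categories.
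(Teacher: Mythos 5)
Your proof is correct and is essentially the paper's proof: both appeal to Lemma \ref{Delemma} to write $\Delta\otimes-$ as a tensor functor factoring through $\uRep(S_{-1})$, use semisimplicity of $\uRep(S_{-1})$ to get that every object of $K^b(\uRep(S_{-1}))$ is split, and conclude by applying an additive functor (which preserves direct sums and shifts). The only cosmetic difference is that the paper folds the forgetful functor $\uRep^\Delta(S_d)\to\uRep(S_d)$ into the composition so that the conclusion lands directly in $\K_d$, whereas your argument produces a split object of $K^b(\uRep^\Delta(S_d))$; you would need one more line noting that the additive forgetful functor sends split objects to split objects, so $\Delta\otimes K$ is split in $\K_d$ as required.
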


\begin{proof} By Lemma \ref{Delemma}, the functor $\Delta \ot -: \uRep(S_d)\to \uRep(S_d)$ 
is naturally isomorphic to a composition $\uRep(S_d)\to \uRep(S_{-1})\to \uRep(S_d)$. 
The category $\uRep(S_{-1})$ is semisimple (\cite[Th\'eor\`eme 2.18]{Del07} or \cite[Corollary 5.21]{CO}), so every object of $K^b(\uRep(S_{-1}))$ is
split. The result follows.
\end{proof}

We define $\K_d^{\le 0}$ as the full subcategory of $\K_d$ consisting of objects $K$ such that
$\Delta\ot K$ is concentrated in non-positive degrees (that is isomorphic to
$\oplus_iA_i[n_i]$ with $A_i\in \A$ and $n_i\in \BZ_{\ge 0}$). Similarly, we define
$\K_d^{\ge 0}$ as the full subcategory of $\K_d$ consisting of objects $K$ such that
$\Delta\ot K$ is concentrated in non-negative degrees. The following result
will be proved in \S \ref{axiom}.

\begin{theorem} \label{tstrth}
The pair $(\K_d^{\le 0}, \K_d^{\ge 0})$ is a $t-$structure {\em (see \cite[D\'efinition 1.3.1]{BBD})} on the category $\K_d$.
\end{theorem}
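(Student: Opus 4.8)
\textbf{Proof proposal for Theorem \ref{tstrth}.}

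The plan is to verify the three axioms of a $t$-structure from \cite[D\'efinition 1.3.1]{BBD}: (i) $\K_d^{\le 0}[1]\subset \K_d^{\le 0}$ and $\K_d^{\ge 0}[-1]\subset \K_d^{\ge 0}$; (ii) $\Hom_{\K_d}(K, K')=0$ for $K\in \K_d^{\le 0}$ and $K'\in \K_d^{\ge 1}:=\K_d^{\ge 0}[-1]$; (iii) every $K\in \K_d$ fits in a distinguished triangle $K'\to K\to K''\to K'[1]$ with $K'\in \K_d^{\le 0}$ and $K''\in \K_d^{\ge 1}$. The key tool throughout is Proposition \ref{split}: tensoring with $\Delta$ lands in the subcategory of split complexes, so the subcategories $\K_d^{\le 0},\K_d^{\ge 0}$ are detected after applying the exact tensor functor $\Delta\otimes-$ and reading off in which degrees the resulting split complex is supported. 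Axiom (i) is then immediate, since $\Delta\otimes(K[1])=(\Delta\otimes K)[1]$ shifts the support.

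For axiom (ii), I would argue as follows. Given $K\in\K_d^{\le 0}$ and $K'\in\K_d^{\ge 1}$, I want $\Hom_{\K_d}(K,K')=0$. Using rigidity of $\K_d$ and the algebra structure on $\Delta$ from Proposition \ref{deltalg}, the unit $\be\to\Delta$ makes every Hom-space in $\K_d$ a retract (as a vector space) of the corresponding Hom-space after tensoring with $\Delta$ — more precisely, the failure of this would be measured by the ideal generated by $\id_\Delta$, which by Corollary \ref{DeltaGen} is exactly the negligible morphisms, so any morphism killed after tensoring with $\Delta$ is negligible, hence (since $\uRep(S_d)$ is Karoubian with the block structure of Proposition \ref{StBlockProp}) factors through negligible objects in a controlled way. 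Then $\Delta\otimes K$ is a split complex in non-positive degrees, $\Delta\otimes K'$ is a split complex in strictly positive degrees (here one uses the computation that, as a functor, $\Delta\otimes-$ is $\cat{F}_\Delta\circ\uRes^{S_d}_{S_{-1}}$ with semisimple target), and there are no nonzero maps between split complexes concentrated in disjoint ranges of degrees. The subtlety is transporting the vanishing back from $\Delta\otimes-$ to $\K_d$ itself; this is where the classification of tensor ideals (Theorem \ref{idealclassif}) and the blockwise analysis are needed, and this is the step I expect to be the main obstacle.

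For axiom (iii), the construction of the truncation triangle should be done blockwise, using the decomposition $\uRep(S_d)=\bigoplus_{\mathsf b\in\B}\uRep_{\mathsf b}(S_d)$ from \S\ref{StBlocks} and Theorem \ref{CObl}. On a semisimple (trivial) block the homotopy category is semisimple, every complex is split, and the truncation is the naive one. On an infinite block, one uses the explicit description of $\uRep_{\mathsf b}(S_d)$ from \cite[\S6]{CO} together with Proposition \ref{TLblocks}, which identifies it with a non-semisimple block of the Temperley--Lieb category; there a $t$-structure on the bounded homotopy category is already well understood (it corresponds to the standard $t$-structure coming from the tilting description of quantum $SL(2)$), and one transports it along the equivalence. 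The main work is checking that the degrees in which $\Delta\otimes-$ is supported match up with this Temperley--Lieb $t$-structure block by block — i.e., that the ad hoc definition of $(\K_d^{\le 0},\K_d^{\ge 0})$ via $\Delta$ coincides with the block-by-block transported one — which is promised in \S\ref{tblock}. Once that matching is established, axiom (iii) follows by assembling the blockwise truncation triangles, and axioms (i) and (ii) are either immediate or reduce to the Temperley--Lieb case. I expect the degree-matching computation on infinite blocks to be the technically heaviest part, and the transport of vanishing in axiom (ii) to be the conceptually trickiest.
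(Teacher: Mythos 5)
Your blockwise strategy in the final paragraph — decompose $\K_d$ into blocks, handle semisimple blocks trivially, and on infinite blocks invoke the equivalence with non-semisimple Temperley--Lieb blocks together with the known $t$-structure coming from $K^b(TL(q))\simeq D^b(\C_q)$ — is exactly the paper's approach, and once the definition-matching is done it delivers all three axioms simultaneously, not just axiom (iii). The matching step you flag as the main work is carried out in the paper by first recasting $\K_d^{\le 0},\K_d^{\ge 0}$ as $\Hom$-vanishing against shifted negligible objects (Proposition~\ref{vanish}), using the adjunction $\Hom(\Delta\otimes K, B[n])\cong\Hom(K,\Delta^*\otimes B[n])$, the self-duality $\Delta^*\cong\Delta$, and Corollary~\ref{DeltaGen}; this reformulation transports visibly along the block equivalence because negligible indecomposables on both sides are exactly the $L_i$ with $i>0$ (Proposition~\ref{StBlockProp}, Lemma~\ref{TLneglig}).

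However, the separate argument you sketch for axiom (ii) in your second paragraph would not work, and you should drop it. The unit $\be\to\Delta$ is not split, so $\Hom$-spaces in $\K_d$ are \emph{not} retracts of the corresponding $\Hom$-spaces after applying $\Delta\otimes-$. Concretely, $\id_\Delta=x_{d+1}$ is negligible (Corollary~\ref{DeltaGen}); if $\be$ were a direct summand of $\Delta$ with inclusion $i$ and projection $p$, then $ip\in\End(\Delta)$ would satisfy $\Tr(ip\cdot\id_\Delta)=\Tr(pi)=\Tr(\id_\be)=1\neq 0$, contradicting negligibility of $\id_\Delta$. More generally you conflate two different tensor ideals: the \emph{annihilator} of $\Delta$ (morphisms $f$ with $\Delta\otimes f=0$) and the ideal \emph{generated by} $\id_\Delta$ (which Corollary~\ref{DeltaGen} identifies with the negligibles). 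These are not the same, so ``killed after tensoring with $\Delta$'' does not imply negligible, and your transport of vanishing from $\Delta\otimes\K_d$ back to $\K_d$ fails. This is precisely why the paper avoids verifying axiom (ii) directly and instead gets all the axioms at once from the block-level identification with the standard $t$-structure on $D^b(\C_q)$.
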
 

Recall that the core of this $t-$structure is the subcategory $\K_d^0=\K_d^{\le 0}\cap \K_d^{\ge 0}$. By
definition this means that $K\in \K_d^0$ if and only if $\Delta\ot K$ is concentrated in degree zero.
In particular, for any $A\in \uRep(S_d)$ the object $A[0]\in \K_d^0$. We have

\begin{corollary}\label{Kdabtens} (a) The category $\K_d^0$ is abelian.

(b) The category $\K_d^0$ is a tensor subcategory of $\K_d$.
\end{corollary}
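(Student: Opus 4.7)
The plan for part (a) is a direct appeal to general $t$-structure theory: once Theorem \ref{tstrth} is established, the core $\K_d^0$ of the $t$-structure is automatically abelian by \cite[Th\'eor\`eme 1.3.6]{BBD}, and nothing further is required.

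For part (b), I need to check that $\K_d^0$ contains the unit object and is closed under the tensor product of $\K_d$. The unit $\be[0]$ lies in $\K_d^0$ since $\Delta\otimes\be\cong\Delta$ is itself a complex concentrated in degree zero. For closure under $\otimes$, the plan is to exploit the commutative unital algebra structure on $\Delta$ provided by Proposition \ref{deltalg}. Given $K,K'\in\K_d^0$, both $\Delta\otimes K$ and $\Delta\otimes K'$ are isomorphic in $\K_d$ to complexes concentrated in degree zero, so the reshuffled tensor product
$$\Delta\otimes\Delta\otimes K\otimes K' \;\cong\; (\Delta\otimes K)\otimes(\Delta\otimes K')$$
is also concentrated in degree zero. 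The key step is then to realize $\Delta\otimes K\otimes K'$ as a direct summand of this object. Using the unit $\eta:\be\to\Delta$ and the multiplication $\mu:\Delta\otimes\Delta\to\Delta$, which satisfy $\mu\circ(\eta\otimes\id_\Delta)=\id_\Delta$, the composition
$$\Delta\otimes K\otimes K' \xrightarrow{\eta\otimes\id} \Delta\otimes\Delta\otimes K\otimes K' \xrightarrow{\mu\otimes\id} \Delta\otimes K\otimes K'$$
is the identity on $\Delta\otimes K\otimes K'$, providing the desired retraction.

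To finish the argument I would verify that any direct summand in $K^b(\uRep(S_d))$ of an object of the form $B[0]$ is again of that form: the endomorphism ring of $B[0]$ in the homotopy category coincides with $\End_{\uRep(S_d)}(B)$ (there are no nonzero homotopies between two complexes concentrated in the same degree), so the retraction idempotent lives in $\uRep(S_d)$ and splits there by Karoubianness. This shows $\Delta\otimes(K\otimes K')$ is concentrated in degree zero, so $K\otimes K'\in\K_d^0$. I do not anticipate any serious obstacle: once one thinks to tensor with an extra copy of $\Delta$ and invoke the algebra structure on $\Delta$, the proof essentially writes itself, with the only mild point being the above remark on splitting idempotents in the homotopy category.
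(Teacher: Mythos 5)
Your part (a) is exactly the paper's proof. For part (b) your argument is correct but takes a genuinely different route. The paper argues by contradiction: if $K\otimes K'\notin\K_d^0$, then the split complex $\Delta\otimes K\otimes K'$ has a nonzero summand in some nonzero degree, and applying Remark \ref{tensornonzero} (tensor product of nonzero objects in $\uRep(S_d)$ is nonzero) one sees that $\Delta\otimes\Delta\otimes K\otimes K'$ would then also fail to be concentrated in degree zero, contradicting the isomorphism $\Delta\otimes\Delta\otimes K\otimes K'\cong(\Delta\otimes K)\otimes(\Delta\otimes K')$. You instead work directly: you use the unital algebra structure on $\Delta$ from Proposition \ref{deltalg} to exhibit $\Delta\otimes K\otimes K'$ as an explicit retract of $\Delta^{\otimes 2}\otimes K\otimes K'$ (via $\mu\circ(\eta\otimes\id_\Delta)=\id_\Delta$), then observe that a retract in $K^b(\uRep(S_d))$ of a complex concentrated in degree zero is again such a complex because $\End_{K^b}(B[0])=\End_{\uRep(S_d)}(B)$ and the latter is Karoubian. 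Both arguments hinge on the same central identity $\Delta^{\otimes 2}\otimes K\otimes K'\cong(\Delta\otimes K)\otimes(\Delta\otimes K')$; the paper's version trades your appeal to the algebra structure on $\Delta$ and the idempotent-splitting observation for the shorter (already available) nonvanishing fact of Remark \ref{tensornonzero}. Your route has the minor virtue of being constructive rather than by contradiction, and of not needing Remark \ref{tensornonzero} at all, at the cost of having to invoke Proposition \ref{deltalg} and spell out the splitting-of-idempotents point. Both are sound.
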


\begin{proof} (a) follows from Theorem \ref{tstrth} and \cite[Th\'eor\`eme 1.3.6]{BBD}. 
For (b) we need to check that for $K,K' \in \K_d^0$
we have $K\ot K'\in \K_d^0$. Assume this is not the case. This means that the split complex
$\Delta \ot K\ot K'$ is not concentrated in degree zero. Since $\Delta \ot X\ne 0$ for any
$0\ne X\in \uRep(S_d)$ (see Remark \ref{tensornonzero}) we get that $\Delta \ot \Delta \ot K\ot K'$ is split and not
concentrated in degree zero. But this is not the case since $\Delta \ot \Delta \ot K\ot K'\simeq 
(\Delta \ot K)\ot (\Delta \ot K')$ and both $\Delta \ot K$ and $\Delta \ot K'$ are split and concentrated
in degree zero. 
\end{proof}

We will show in \S \ref{axiom} that the category $\K_d^0$ is actually pre-Tannakian. Thus we
constructed a fully faithful tensor functor $\uRep(S_d)\to \K_d^0$ where $\K_d^0$ is a pre-Tannakian
category. Of course a priori this might be quite different from Deligne's functor
$\uRep(S_d)\to \uRep^{ab}(S_d)$. 

\subsection{Verification of $t-$structure axioms}\label{axiom}
The main goal of this Section is to prove Theorem \ref{tstrth}.
\subsubsection{}\label{tneg}
 We start by reformulating  the definition of
$\K_d^{\le 0}$ and $\K_d^{\ge 0}$ in terms of negligible objects, i.e. objects whose identity morphisms are negligible.  

\begin{proposition} \label{vanish}
 Let $K\in \K_d$. Then $K\in \K_d^{\le 0}$ if and only if $\Hom(K,A[n])=0$
for any negligible $A\in\uRep(S_d)$ and $n\in \BZ_{<0}$. Similarly, $K\in \K_d^{\ge 0}$ if and only if $\Hom(K,A[n])=0$
for any negligible $A\in\uRep(S_d)$ and $n\in \BZ_{>0}$.
\end{proposition}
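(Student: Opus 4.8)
The plan is to reduce the statement to a computation about $\Hom$-groups in the homotopy category of the semisimple category $\uRep(S_{-1})$. Recall that, by definition, $K \in \K_d^{\le 0}$ means that $\Delta \otimes K$ is (isomorphic to a complex) concentrated in non-positive degrees. By Lemma \ref{Delemma} the functor $\Delta \otimes -$ is isomorphic to $\cat{F}_\Delta \circ \uRes^{S_d}_{S_{-1}}$; since $\uRep(S_{-1})$ is semisimple, every object of $K^b(\uRep(S_{-1}))$ splits as $\oplus_i B_i[m_i]$, and a split complex over a semisimple category is concentrated in non-positive degrees if and only if it has no $\Hom$ from any object into shifts $B[n]$ with $n<0$ (and $B\neq 0$), if and only if $H^n = 0$ for $n < 0$. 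So the first step is to observe that, because $\Delta \otimes K$ is split (Proposition \ref{split}), $K \in \K_d^{\le 0}$ iff $\Hom_{\K_d}(\Delta \otimes K, A[n]) = 0$ for all $A \in \uRep(S_d)$ and all $n < 0$, where we may take $A$ arbitrary — and then cut down which $A$ actually matter.

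The key bridge is an adjunction/trace argument. Since $\Delta$ is a self-dual commutative algebra object with the structure from Proposition \ref{deltalg} (indeed $\Delta = \Delta_{d+1}$ is self-dual, $\id_\Delta = x_{d+1}$), we have $\Hom_{\K_d}(\Delta \otimes K, A[n]) \cong \Hom_{\K_d}(K, \Delta \otimes A[n])$ via the rigidity isomorphisms; more precisely, tensoring with $\id_\Delta$ and composing with the unit/counit of the algebra structure gives maps back and forth between $\Hom(K, A[n])$ and $\Hom(K, \Delta \otimes A[n])$, and the composite $\Hom(K, A[n]) \to \Hom(K, \Delta \otimes A[n]) \to \Hom(K, A[n])$ is multiplication by a scalar coming from $\Tr(1_\Delta) = \dim(\Delta) = d - (d+1) + \cdots$; in fact one should use Corollary \ref{DeltaGen}: the tensor ideal generated by $\id_\Delta$ is precisely the ideal of negligible morphisms. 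This is the crucial input. It says that a morphism factors through (a tensor multiple of) $\Delta$ precisely when it is negligible. Concretely, I would argue: $\Hom_{\K_d}(K, A[n]) \ne 0$ with the relevant morphism non-negligible forces $\Hom_{\K_d}(\Delta \otimes K, \Delta \otimes A[n]) \ne 0$ hence (after splitting) $\Delta \otimes K$ not concentrated in degrees $\ge n$; conversely if $\Delta\otimes K$ has a summand in negative degree, the corresponding projection is a non-negligible map $K \to A[n]$ for some $A$.

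To package this cleanly: using the decomposition of $\uRep(S_d)$ into blocks (Proposition \ref{StBlockProp}) together with Proposition \ref{green}(1), the non-negligible indecomposables are exactly the minimal objects $L_0$ of the infinite blocks, while negligible objects are all the rest; and by Corollary \ref{DeltaGen}, $\Delta \otimes L$ kills precisely the negligible part. So $\Delta \otimes K$ and $K$ have "the same non-negligible cohomology" in an appropriate sense, while the negligible objects $A$ are exactly those for which $\Hom(\Delta \otimes K, \Delta \otimes A[n])$ cannot detect a nonzero contribution. I would therefore show the two conditions
\[
\text{($\Delta \otimes K$ concentrated in degrees $\le 0$)} \iff \text{($\Hom_{\K_d}(K, A[n]) = 0$ for all negligible $A$, all $n<0$)}
\]
by expressing $\Hom_{\K_d}(K, A[n])$, for $A$ negligible, in terms of $\Hom_{\K_d}(\Delta \otimes K, \text{(something)})$ via $\id_\Delta$ being in the negligible ideal, and conversely reading off the degrees in which $\Delta \otimes K$ lives from such $\Hom$-groups. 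The argument for $\K_d^{\ge 0}$ is identical with the inequalities reversed. The main obstacle I anticipate is making the "factor through $\Delta$" step fully precise at the level of the triangulated category $\K_d$ — i.e. checking that the fact that $\id_\Delta$ generates the negligible ideal in $\uRep(S_d)$ transfers to a usable statement about morphisms between complexes and their shifts in $K^b$, rather than just about morphisms in $\uRep(S_d)$ itself; this likely requires a spectral-sequence or filtration argument on the two-term-at-a-time structure of complexes, or an inductive dévissage on the length of the complex $K$ using the triangulated structure.
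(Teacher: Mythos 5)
You have the right ingredients in hand (Proposition \ref{split}, self-duality of $\Delta$ from Proposition \ref{xidemp}, Corollary \ref{DeltaGen}, and the rigidity adjunction) but you never assemble them into a working argument, and two of the moves you sketch are wrong turns.

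First, the ``multiplication by a scalar $\Tr(1_\Delta)=\dim(\Delta)$'' idea cannot work: in $\uRep(S_d)$ the morphism $\id_\Delta=x_{d+1}$ is negligible (this is noted in the proof of Corollary \ref{DeltaGen}), so $\dim_{\uRep(S_d)}(\Delta)=0$ and the composite you describe is the zero map, giving no information. (Proposition \ref{Deltadim} computes a dimension in $\uRep^{\Delta_n}(S_t)$, not in $\uRep(S_d)$.) Second, the obstacle you anticipate at the end --- needing a spectral sequence or d\'evissage on the length of $K$ to transfer the negligible-ideal statement from $\uRep(S_d)$ to $\K_d$ --- does not exist. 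The rigidity isomorphism $\Hom_{\K_d}(\Delta\ot K, Y)\cong\Hom_{\K_d}(K,\Delta^*\ot Y)$ is purely formal and holds for arbitrary $K,Y\in\K_d$ with no control needed over the complex $K$; once $\Delta\ot K$ is known to be split (Proposition \ref{split}), that is all the control one needs.

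The clean assembly uses Corollary \ref{DeltaGen} not as a ``morphisms factor through $\Delta$'' statement but via its two object-level consequences: (i) for every $B\in\uRep(S_d)$ the object $\Delta\ot B=\Delta^*\ot B$ is negligible, and (ii) every negligible object of $\uRep(S_d)$ is a direct summand of some $\Delta\ot B$. Then: if $\Hom(K,A[n])=0$ for all negligible $A$ and $n<0$, by (i) and the adjunction $\Hom(\Delta\ot K,B[n])=\Hom(K,\Delta^*\ot B[n])=0$ for all $B$ and $n<0$; since $\Delta\ot K$ is split, this forces it into non-positive degrees, i.e.\ $K\in\K_d^{\le 0}$. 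Conversely if $K\in\K_d^{\le 0}$ then $\Hom(\Delta\ot K,B[n])=0$ for $n<0$, hence $\Hom(K,\Delta^*\ot B[n])=0$, and by (ii) this gives $\Hom(K,A[n])=0$ for every negligible $A$. No filtration, no induction on length, and no trace computation is involved.
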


\begin{proof} We prove only the characterization of $\K_d^{\le 0}$ (the case of $\K_d^{\ge 0}$ is similar).
Assume first that $\Hom(K,A[n])=0$ for any negligible $A$ and $n\in \BZ_{<0}$. By Proposition \ref{xidemp} $\Delta^\ast=\Delta$, thus by Corollary \ref{DeltaGen} $\Delta\otimes B=\Delta^\ast\otimes B$ is negligible for all $B\in\uRep(S_d)$.  Hence,  $\Hom(\Delta \ot K, B[n])=\Hom(K, \Delta^*\ot B[n])=0$ for any $B\in\uRep(S_d)$ and $n\in \BZ_{<0}$.
Since by Proposition \ref{split} the object $\Delta \ot K\in \K_d$ is split we get immediately that
$K\in \K_d^{\le 0}$.

Conversely, assume that $K\in \K_d^{\le 0}$. Then by definition $\Hom(\Delta \ot K, B[n])=0$ for any
$B\in \uRep(S_d)$ and $n\in \BZ_{<0}$. Hence $\Hom(K, \Delta^* \ot B[n])=0$. Since, by Corollary \ref{DeltaGen}, any
negligible object is a direct summand of an object of the form $\Delta\ot B=\Delta^*\ot B$ we are done.
\end{proof}

\subsubsection{Blockwise description of $(\K_d^{\le 0}, \K_d^{\ge 0})$}\label{tblock}
Recall that the category $\uRep(S_d)$ decomposes into a direct sum of blocks
$\uRep(S_d)=\oplus_\mathsf{b} \uRep_{\mathsf{b}}(S_d)$, see \S \ref{StBlocks}. Similarly, we have a
decomposition $\K_d=\oplus_\mathsf{b}(\K_d)_\mathsf{b}$ (in other words, for any object $K\in \K_d$ 
we have a canonical decomposition $K=\oplus_\mathsf{b}K_\mathsf{b}$ where
all the terms of the complex $K_\mathsf{b}\in (\K_d)_\mathsf{b}$ are in the block $\uRep_\mathsf{b}(S_d)$). 
Since $\Delta \ot (\oplus_\mathsf{b}K_\mathsf{b})=\oplus_\mathsf{b}\Delta \ot K_\mathsf{b}$ we see that $K=\oplus_\mathsf{b}K_\mathsf{b}\in \K_d^{\le 0}$
if and only if $K_\mathsf{b}\in \K_d^{\le 0}$ for any $\mathsf{b}$ (and similarly for $\K_d^{\ge 0}$). 
In other words $\K_d^{\le 0}=\oplus_\mathsf{b}(\K_d^{\le 0})_\mathsf{b}$ where $(\K_d^{\le 0})_\mathsf{b}=\K_d^{\le 0}\cap (\K_d)_\mathsf{b}$,
that is the subcategory $\K_d^{\le 0}$ is compatible with the block decomposition (and similarly
for $\K_d^{\ge 0}=\oplus_\mathsf{b}(\K_d^{\ge 0})_\mathsf{b}$). Thus in order to
verify that $(\K_d^{\le 0}, \K_d^{\ge 0})$ is a $t-$structure on $\K_d$ it is sufficient to verify
that $((\K_d^{\le 0})_\mathsf{b}, (\K_d^{\ge 0})_\mathsf{b})$ is a $t-$structure on $(\K_d)_\mathsf{b}$ for every block $\mathsf{b}$. 
Fortunately, Proposition \ref{vanish} gives rise to an easy description of $(\K_d^{\le 0})_\mathsf{b}$
and $(\K_d^{\ge 0})_\mathsf{b}$.

\begin{proposition} \label{blockbyblock}
Let $K\in (\K_d)_\mathsf{b}$.

 (a) Assume that $\mathsf{b}$ is a semisimple block and let $L$ be a unique indecomposable object in $\mathsf{b}$.
  Then $K\in (\K_d^{\le 0})_\mathsf{b}$ (resp. $K\in (\K_d^{\ge 0})_\mathsf{b}$) if and only if 
$K\in (\K_d)_\mathsf{b}$ and $\Hom(K,L[n])=0$
   for any $n\in \BZ_{<0}$ (resp. for $n\in \BZ_{>0}$).

(b) Assume that $\mathsf{b}$ is a non-semisimple block with indecomposable objects $L_i$ for $i\in \BZ_{\ge 0}$
labeled as in Proposition \ref{StBlockProp}(ii). 
Then $K\in (\K_d^{\le 0})_\mathsf{b}$ (resp. $K\in (\K_d^{\ge 0})_\mathsf{b}$) if and only if 
$K\in (\K_d)_\mathsf{b}$ and $\Hom(K,L_i[n])=0$
 for all $i>0$ and any $n\in \BZ_{<0}$ (resp. for $n\in \BZ_{>0}$).
\end{proposition}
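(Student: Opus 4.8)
The plan is to deduce this proposition directly from Proposition \ref{vanish} together with the classification of negligible objects in a single block provided by Proposition \ref{StBlockProp}. Indeed, Proposition \ref{vanish} says that for $K\in(\K_d)_\mathsf{b}$ one has $K\in\K_d^{\le 0}$ if and only if $\Hom(K,A[n])=0$ for every negligible $A\in\uRep(S_d)$ and every $n\in\BZ_{<0}$. Because the block decomposition of $\uRep(S_d)$ induces an orthogonal decomposition $\K_d=\oplus_\mathsf{b}(\K_d)_\mathsf{b}$, any morphism from $K\in(\K_d)_\mathsf{b}$ to $A[n]$ factors through the projection of $A$ onto its $\mathsf{b}$-component; hence it suffices to test against negligible objects $A$ lying entirely in the block $\mathsf{b}$. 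So the whole task reduces to identifying, within each block $\mathsf{b}$, the negligible objects and checking that vanishing against all of them is equivalent to vanishing against the single object ($L$ in the semisimple case) or the family $\{L_i\}_{i>0}$ (in the non-semisimple case) named in the statement.

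First I would handle the semisimple case (a). Here, by Proposition \ref{StBlockProp}(i), the category $\uRep_\mathsf{b}(S_d)$ is additively equivalent to $\Vec_F$ with the unique indecomposable $L=L(\mathsf{b})$, and $\dim(L)=0$, so $\id_L$ is negligible; thus every object of $\uRep_\mathsf{b}(S_d)$ is a finite direct sum of copies of $L$ and is negligible. Therefore the negligible objects of $\mathsf{b}$ are precisely the objects $L^{\oplus m}$, and $\Hom(K,A[n])=0$ for all such $A$ and all $n\in\BZ_{<0}$ if and only if $\Hom(K,L[n])=0$ for all $n\in\BZ_{<0}$. Combined with Proposition \ref{vanish} (restricted to block $\mathsf{b}$ as above) this gives statement (a); the $\K_d^{\ge 0}$ case is identical with $n\in\BZ_{>0}$.

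For the non-semisimple case (b), Proposition \ref{StBlockProp}(ii) tells us that the indecomposables are $L_0,L_1,L_2,\dots$ with $\dim(L_i)=0$ exactly for $i>0$. So an indecomposable object of $\mathsf{b}$ is negligible if and only if it is some $L_i$ with $i>0$; since negligibility is preserved under direct sums (the negligible morphisms form a tensor ideal, hence an additive subgroup), an object $A\in\uRep_\mathsf{b}(S_d)$ is negligible if and only if its decomposition into indecomposables involves no copy of $L_0$, i.e.\ $A$ is a direct sum of the $L_i$ with $i>0$. Hence $\Hom(K,A[n])=0$ for all negligible $A$ in $\mathsf{b}$ and all $n\in\BZ_{<0}$ if and only if $\Hom(K,L_i[n])=0$ for all $i>0$ and all $n\in\BZ_{<0}$, and again Proposition \ref{vanish} finishes the argument (with the evident modification for $\K_d^{\ge 0}$).

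The one point requiring a small amount of care — and the only place where anything could go wrong — is the passage from "negligible $A\in\uRep(S_d)$" in Proposition \ref{vanish} to "negligible $A$ supported in the block $\mathsf{b}$." This is where I expect to spend the most words: one must observe that an arbitrary negligible $A$ decomposes as $A=\oplus_{\mathsf{b}'}A_{\mathsf{b}'}$ with each summand $A_{\mathsf{b}'}$ negligible (the projector onto a block is an idempotent in $\End(A)$, and an idempotent applied to a negligible identity stays inside the negligible ideal), that $\Hom(K,A[n])=\Hom(K,A_{\mathsf{b}}[n])$ for $K\in(\K_d)_\mathsf{b}$ by orthogonality of blocks, and that conversely any negligible object supported in $\mathsf{b}$ is a legitimate test object. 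Once this bookkeeping is in place, the rest is immediate from the dimension computations recorded in Proposition \ref{StBlockProp}.
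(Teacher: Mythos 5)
Your proof is correct and is precisely the argument the paper intends; the paper's own proof is the one-line instruction ``Combine Proposition \ref{vanish} and Proposition \ref{StBlockProp},'' and you have simply spelled out the details (orthogonality of blocks, characterization of negligible objects blockwise via the dimension computations) that the paper leaves implicit.
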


\begin{proof} Combine Proposition \ref{vanish} and Proposition \ref{StBlockProp}.
\end{proof}

\subsubsection{Analogy with Temperley-Lieb category} 
The definition of the $t-$structure in \S \ref{deft} was motivated by the following analogy. Pick a nontrivial root of unity $q$ such that
$q+q^{-1}\in F$ and recall the Temperley-Lieb category $TL(q)$ from \S \ref{TLcat}. 
Consider the category $K^b(TL(q))$. It is well known (see e.g.~\cite[Proposition 2.7]{O} that the embedding $TL(q)\subset \C_q$
induces an equivalence of triangulated categories $K^b(TL(q))\simeq D^b(\C_q)$ where $D^b(\C_q)$ is the derived category
of the abelian category $\C_q$. In particular the category $\D_q:=K^b(TL(q))$ inherits a natural $t-$structure 
$(\D_q^{\le 0},\D_q^{\ge 0})$ from the category $D^b(\C_q)$, see e.g.~\cite[Exemple 1.3.2(i)]{BBD}\footnote{Thus the category 
$\D_q^{\le 0}$ consists of objects of $D^b(\C_q)$ with nontrivial cohomology only in non-positive degrees and similarly for
$\D_q^{\ge 0}$.}. 
This $t-$structure can be characterized as follows. 

Let $St:=V_{l-1}\in TL(q)$ be the so called {\em Steinberg module}. It is known 
(see \cite[Theorem 9.8]{APW}) that $St$ is a projective
object of the category $\C_q$. Thus $St\ot Y$ is a projective object of $\C_q$ for any $Y\in \C_q$, see 
\cite[Lemma 9.10]{APW}.
In particular, for any $K\in \D_q$ the object $St\ot K\in \D_q$ is isomorphic to its cohomology
(as a finite complex consisting of projective modules and with projective cohomology).   It is well known that each projective
object of $\C_q$ is contained in $TL(q)\subset \C_q$,
see \cite[(5.7)]{A}. Thus in the language of \S \ref{homgen} for any $K\in K^b(TL(q))$ 
the complex $St\ot K$ is split (analogous to Proposition \ref{split}). It is clear that $K\in \D_q^{\le 0}$ if and only if
$St\ot K$ is concentrated in non-positive degrees and similarly for $\D_q^{\ge 0}$. This is a counterpart of the definition
of the $t-$structure $(\K_d^{\le 0},\K_d^{\ge 0})$.

Furthermore, it is known that each direct summand of $St\ot Y$ for $Y\in TL(q)$ is negligible
(see \cite[Proposition 3.5 and Lemma 3.6]{A}) and that each negligible
object of $TL(q)$ is a direct summand of $St\ot Y$ with $Y\in TL(q)$, see \cite[p. 158]{A}.
 Thus we have the following counterpart
of Proposition \ref{vanish} (with a similar proof):

(a) {\em  Let $K\in \D_q$. Then $K\in \D_q^{\le 0}$ (resp. $K\in \D_q^{\ge 0}$) if and only if $\Hom(K,A[n])=0$
for any negligible $A\in TL(q)$ and $n\in \BZ_{<0}$ (resp. $n\in \BZ_{>0}$).} 

Hence, following \S \ref{tblock}, we can give a blockwise description of the $t-$structure $(\D_q^{\le 0},\D_q^{\ge 0})$.
For a block $\mathsf{b}$ let $(\D_q)_{\mathsf{b}}$ denote the full subcategory of $\D_q=K^b(TL(q))$ consisting of
complexes with all terms from the block $\mathsf{b}$. 
Using Lemma \ref{TLneglig} we obtain the following counterpart of Proposition \ref{blockbyblock}:

(b) {\em  Let $\mathsf{b}$ be a non-semisimple block of $TL(q)$ with indecomposable objects $L_i$ for $i\in \BZ_{\ge 0}$
labeled as in Remark \ref{TLlabel}.  Let $K\in (\D_q)_{\mathsf{b}}$.
Then $K\in \D_q^{\le 0}$ (resp. $K\in \D_q^{\ge 0}$) if and only if $\Hom(K,L_i[n])=0$
 for all $i>0$ and any $n\in \BZ_{<0}$ (resp. for $n\in \BZ_{>0}$).}

From this description it is clear that the pair $(\D_q^{\le 0}\cap (\D_q)_{\mathsf{b}}, \D_q^{\ge 0}\cap (\D_q)_{\mathsf{b}})$
of subcategories of $(\D_q)_{\mathsf{b}}$ corresponds to the pair 
$((\K_d^{\le 0})_{\mathsf{b}'}, (\K_d^{\ge 0})_{\mathsf{b}'})$ under the equivalence
$(\D_q)_{\mathsf{b}}\simeq (\K_d)_{\mathsf{b}'}$ induced by the equivalence of blocks from Proposition \ref{TLblocks}.
Since  $(\D_q^{\le 0}\cap (\D_q)_{\mathsf{b}}, \D_q^{\ge 0}\cap (\D_q)_{\mathsf{b}})$ is a $t-$structure on the category
$(\D_q)_{\mathsf{b}}$ we have the following 

\begin{corollary} \label{pokus}
Let $\mathsf{b}$ be a non-semisimple block of the category $TL(q)$ and let $\mathsf{b}'$ be an equivalent
block in the category $\uRep(S_d)$ as in Proposition \ref{TLblocks}. Then 
$((\K_d^{\le 0})_{\mathsf{b}'}, (\K_d^{\ge 0})_{\mathsf{b}'})$ is a $t-$structure on the category $(\K_d)_{\mathsf{b}'}$.
$\square$
\end{corollary}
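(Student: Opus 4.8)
\textbf{Proof proposal for Corollary \ref{pokus}.}

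The plan is to exploit the equivalence of additive categories $(\D_q)_{\mathsf{b}}\simeq(\K_d)_{\mathsf{b}'}$ furnished by Proposition \ref{TLblocks}, and to show that under this equivalence the pair of subcategories cut out by the blockwise Hom-vanishing conditions on the two sides match up. Concretely, I would first record that the equivalence $\Phi\colon(\D_q)_{\mathsf{b}}\xrightarrow{\ \sim\ }(\K_d)_{\mathsf{b}'}$ of additive categories induces an equivalence of the associated bounded homotopy categories $K^b((\D_q)_{\mathsf{b}})\simeq K^b((\K_d)_{\mathsf{b}'})$ as triangulated categories; since $K^b$ of a block summand of a Karoubian additive category is precisely the block summand of $K^b$ of the whole category, this is the desired equivalence of triangulated categories $(\D_q)_{\mathsf{b}}\simeq(\K_d)_{\mathsf{b}'}$ in the sense used in \S\ref{tblock}. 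The key point is that the equivalence of Proposition \ref{TLblocks} is only required to be an equivalence of \emph{additive} categories, and a $t$-structure defined purely in terms of Hom-vanishing against translates of objects is transported along any triangulated equivalence.

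Next I would compare the descriptions of the two subcategories. On the Temperley--Lieb side, item (b) of \S\ref{axiom} (Analogy with Temperley--Lieb category) says that for $K\in(\D_q)_{\mathsf{b}}$ one has $K\in\D_q^{\le 0}$ (resp.\ $\D_q^{\ge 0}$) if and only if $\Hom(K,L_i[n])=0$ for all $i>0$ and all $n\in\BZ_{<0}$ (resp.\ $n\in\BZ_{>0}$), where the $L_i$ are the indecomposables of $\mathsf{b}$ labeled as in Remark \ref{TLlabel}. On the $\uRep(S_d)$ side, Proposition \ref{blockbyblock}(b) gives the identical-looking criterion for $(\K_d^{\le 0})_{\mathsf{b}'}$ and $(\K_d^{\ge 0})_{\mathsf{b}'}$ in terms of the indecomposables of $\mathsf{b}'$ labeled as in Proposition \ref{StBlockProp}(ii). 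Since the labeling on the Temperley--Lieb block was \emph{defined} (Remark \ref{TLlabel}) by transporting the labeling of $\uRep_{\mathsf{b}'}(S_d)$ along exactly the equivalence of Proposition \ref{TLblocks}, the functor $\Phi$ sends $L_i\in(\D_q)_{\mathsf{b}}$ to (an object isomorphic to) $L_i\in(\K_d)_{\mathsf{b}'}$ for every $i$. Therefore $\Phi$ (and its extension to homotopy categories) carries the Hom-vanishing condition defining $\D_q^{\le 0}\cap(\D_q)_{\mathsf{b}}$ precisely to the one defining $(\K_d^{\le 0})_{\mathsf{b}'}$, and likewise for the $\ge 0$ parts; so $\Phi$ identifies the pair $(\D_q^{\le 0}\cap(\D_q)_{\mathsf{b}},\,\D_q^{\ge 0}\cap(\D_q)_{\mathsf{b}})$ with $((\K_d^{\le 0})_{\mathsf{b}'},(\K_d^{\ge 0})_{\mathsf{b}'})$.

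To conclude, I would invoke the fact—already established in \S\ref{axiom} via the identification $K^b(TL(q))\simeq D^b(\C_q)$ and the standard $t$-structure on a derived category (\cite[Exemple 1.3.2(i)]{BBD})—that $(\D_q^{\le 0},\D_q^{\ge 0})$ is a $t$-structure on $\D_q$, and that, by the blockwise compatibility argument analogous to the one in \S\ref{tblock}, its restriction $(\D_q^{\le 0}\cap(\D_q)_{\mathsf{b}},\,\D_q^{\ge 0}\cap(\D_q)_{\mathsf{b}})$ is a $t$-structure on the summand $(\D_q)_{\mathsf{b}}$. Being a $t$-structure is preserved under any equivalence of triangulated categories; transporting along $\Phi$ then yields that $((\K_d^{\le 0})_{\mathsf{b}'},(\K_d^{\ge 0})_{\mathsf{b}'})$ is a $t$-structure on $(\K_d)_{\mathsf{b}'}$, which is the assertion. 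I expect the main (though still modest) obstacle to be the bookkeeping in the previous paragraph: verifying carefully that the labeling transported in Remark \ref{TLlabel} really does make $\Phi(L_i)\cong L_i$ for all $i$, so that the two Hom-vanishing conditions genuinely correspond under $\Phi$ rather than merely resembling each other formally; once that identification of indecomposables is pinned down, the rest is the general principle that $t$-structures defined by Hom-orthogonality conditions transport along triangulated equivalences.
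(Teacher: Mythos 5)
Your proposal is correct and follows essentially the same route as the paper: transport the pair across the triangulated equivalence $K^b$ of the additive block equivalence from Proposition \ref{TLblocks}, match the two Hom-vanishing descriptions using the fact that Remark \ref{TLlabel} defines the $TL(q)$-side labeling precisely by transporting the $\uRep(S_d)$-side labeling, and use that being a $t$-structure is invariant under triangulated equivalence. The one step you flag as "bookkeeping to be pinned down" — that $\Phi(L_i)\cong L_i$ — is exactly what Remark \ref{TLlabel} guarantees (including independence of the choice of equivalence), so there is no residual gap.
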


\subsubsection{Proof of Theorem \ref{tstrth}}\label{prtstr}
 It suffices to show 
$((\K_d^{\le 0})_\mathsf{b}, (\K_d^{\ge 0})_\mathsf{b})$ is a $t-$structure on $(\K_d)_\mathsf{b}$ for every block $\mathsf{b}$. 
If the block $\mathsf{b}$ is semisimple then the category $(\K_d)_\mathsf{b}$ can be identified with $K^b(\Vec_F)$ and Proposition \ref{blockbyblock} (a) shows that
$((\K_d^{\le 0})_\mathsf{b}, (\K_d^{\ge 0})_\mathsf{b})$ is the standard $t-$structure on $K^b(\Vec_F)$.

It remains to consider the case when $\mathsf{b}$ is a non-semisimple block. 
Choose a nontrivial root of unity $q$ such that $q+q^{-1}\in F$ (for example a primitive cubic root of unity $\zeta$
will work for any $F$ since $\zeta+\zeta^{-1}=-1\in F$). Then there is a non-semisimple block in $TL(q)$, which is equivalent to $\mathsf{b}$ (Proposition \ref{TLblocks}).  Hence, by Corollary \ref{pokus}, $((\K_d^{\le 0})_\mathsf{b}, (\K_d^{\ge 0})_\mathsf{b})$ is a $t-$structure on $(\K_d)_\mathsf{b}$.
%We will use the following indirect
%argument. Recall from Proposition \ref{TLblocks} that the additive category $\uRep_{\mathsf{b}}(S_d)$ is equivalent to a non-semisimple
%block $TL_{\mathsf{b}'}(q)$ in the Temperley-Lieb category\footnote{One can choose here $q$ to be a nontrivial (i.e. $\ne \pm 1$)
%root of unity such that $TL(q)$ is defined over $F$, that is $q+q^{-1}\in F$. In particular a primitive cubic root of unity $\zeta$
%will work for any $F$ since $\zeta+\zeta^{-1}=-1\in F$.}. 
%Hence the category $(\K_d)_\mathsf{b}=K^b(\uRep_{\mathsf{b}}(S_d))$ 
%is equivalent to $K^b(TL_{\mathsf{b}'}(q))$. Now recall that $K^b(TL(q))$ can be identified with the bounded derived category of %quantum $sl_2$ (see \cite[]{O});
%in particular the category $K^b(TL_{\mathsf{b}'}(q))$ has an obvious $t-$structure. The arguments
%parallel to those in \S\ref{tneg} and \S\ref{tblock} show that this $t-$structure has precisely the same description as 
%$((\K_d^{\le 0})_\mathsf{b},(\K_d^{\ge 0})_\mathsf{b})$; in particular the pair $((\K_d^{\le 0})_\mathsf{b},(\K_d^{\ge 0})_\mathsf{b})$ is
%$t-$structure on $(\K_d)_\mathsf{b}$. 
%This completes the proof of the theorem.
\hfill $\square$

\subsubsection{Complements} The proof in \S \ref{prtstr} implies the following

\begin{corollary}\label{KpreT}
 (a) The category $\K_d^0$ is pre-Tannakian.

(b) Any object of the category $\K_d^0$ is isomorphic to a subquotient of a direct sum of tensor powers 
of $[pt]$.
\end{corollary}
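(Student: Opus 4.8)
The plan is to extract both statements from the blockwise analysis carried out in \S\ref{prtstr} together with general facts about $t$-structures and the comparison with the Temperley--Lieb side. For part (a), recall that $\K_d^0$ is abelian (Corollary \ref{Kdabtens}(a)) and is a rigid symmetric tensor subcategory of $\K_d$ (Corollary \ref{Kdabtens}(b)), so it remains to verify the finiteness conditions from \S\ref{preT}: all Hom-spaces are finite-dimensional, $\End(\be)=F$, and every object has finite length. For the Hom-finiteness, I would use that $\Hom_{\K_d}(K,K')$ is a subquotient of a finite direct sum of Hom-spaces in $\uRep(S_d)$, each of which is finite-dimensional; the equality $\End_{\K_d^0}(\be)=\End_{\uRep(S_d)}([\varnothing])=F$ is immediate since $\be=[\varnothing][0]$ is the unit and $\uRep(S_d)$ has $\End([\varnothing])=F$. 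The real content is finite length, and here I would argue blockwise: by \S\ref{tblock} the core decomposes as $\K_d^0=\oplus_\mathsf{b}(\K_d^0)_\mathsf{b}$, and for a semisimple block $(\K_d)_\mathsf{b}\simeq K^b(\Vec_F)$ the core of the standard $t$-structure is just $\Vec_F$, which is plainly a finite-length category. For a non-semisimple block, the equivalence $(\K_d)_\mathsf{b}\simeq (\D_q)_{\mathsf{b}'}\simeq D^b(\cat{B}')$ (where $\cat{B}'$ is the corresponding block of $\C_q$, a finite-length abelian category) from \S\ref{tblock} identifies the core of the $t$-structure on $(\K_d)_\mathsf{b}$ with the heart of the standard $t$-structure on $D^b(\cat{B}')$, namely $\cat{B}'$ itself, which has finite length. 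Hence every object of $\K_d^0$ has finite length, and $\K_d^0$ is pre-Tannakian.

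For part (b), I would first note that the subcategory of $\K_d^0$ consisting of subquotients of direct sums of tensor powers of $[pt]$ is closed under subquotients, finite direct sums and extensions (extensions because a subquotient of $[pt]^{\otimes a}$ and a subquotient of $[pt]^{\otimes b}$ have their extension a subquotient of $[pt]^{\otimes a}\oplus[pt]^{\otimes b}$, which is itself a subquotient of $[pt]^{\otimes(a+b)}$ after tensoring, but more simply a subquotient of the direct sum). So it is a Serre subcategory, and since $\K_d^0$ has finite length it suffices to show every \emph{simple} object $S\in\K_d^0$ is a subquotient of some $[pt]^{\otimes m}$. Every object $A\in\uRep(S_d)$ lies in $\K_d^0$ via $A\mapsto A[0]$, and every indecomposable $A$ is a direct summand of some $[pt]^{\otimes m}$ (by the classification of indecomposables recalled in \S\ref{222}, each $L(\lambda)$ is a summand of $[pt]^\lambda$, which is a summand of $[pt]^{\otimes|\lambda|}$). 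Thus for $A\in\uRep(S_d)$ the object $A[0]$ is a subquotient of $[pt]^{\otimes m}[0]$. The point is then that the simple objects of $\K_d^0$ arise as subquotients (in fact as cohomology objects $H^0$) of objects $A[0]$ with $A\in\uRep(S_d)$: working blockwise again, in a semisimple block the core is $\Vec_F$ generated by $L[0]$; in a non-semisimple block, under $(\K_d)_\mathsf{b}\simeq D^b(\cat{B}')$ the object $L_0[0]$ maps to a module whose composition factors are all the simple objects of $\cat{B}'$ (this uses that $\cat{B}'$ is a Brauer-tree-like block and $L_0$ corresponds to an object through which every simple is subquotiented, compatibly with Proposition \ref{StBlockProp} and the results of \cite{CO}). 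Taking $A=L_0$ (or the relevant indecomposable) shows each simple of $\K_d^0$ is a subquotient of some $[pt]^{\otimes m}$, completing the proof.

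The step I expect to be the main obstacle is the last one in part (b): showing that \emph{every} simple object of $\K_d^0$ actually appears as a subquotient of some $A[0]$ with $A\in\uRep(S_d)$, equivalently that the projective-generator-type object $L_0$ in each non-semisimple block sees every simple of the heart $\cat{B}'$. This requires unwinding the equivalence $(\K_d)_\mathsf{b}\simeq D^b(\cat{B}')$ precisely enough to identify where $L_0[0]$ lands, and invoking the detailed block structure from \cite{CO} (and the Temperley--Lieb/quantum-$SL_2$ dictionary) to know that this object has all simples as composition factors. Once that identification is in hand, both the finite-length claim in (a) and the generation claim in (b) follow formally from the standard properties of hearts of $t$-structures on bounded derived categories of finite-length abelian categories.
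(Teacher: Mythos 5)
Your overall plan --- abelianness from Corollary \ref{Kdabtens}, $\Hom$-finiteness and $\End(\be)=F$ inherited from $\K_d$, and a blockwise reduction of both the finite-length claim and the generation claim to the corresponding facts for blocks of $\C_q$ via the Temperley--Lieb comparison --- is precisely the paper's approach, and your part (a) essentially reproduces the paper's argument. One small misattribution to fix: Corollary \ref{Kdabtens}(b) only asserts that $\K_d^0$ is a tensor subcategory of $\K_d$, not that it is rigid. Rigidity is established inside the proof of Corollary \ref{KpreT} itself, by observing that if $\Delta\otimes K$ is concentrated in degree $0$ then so is $\Delta\otimes K^*\simeq(\Delta\otimes K)^*$; you should supply or cite that step rather than read it into Corollary \ref{Kdabtens}(b).

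More seriously, your reduction in part (b) to simple objects rests on the claim that subquotients of direct sums of tensor powers of $[pt]$ form a Serre subcategory, and the justification you give for closure under extensions is false. An extension of a subquotient of $M$ by a subquotient of $N$ need not be a subquotient of $M\oplus N$: for instance, over $k[\Z/p]$ in characteristic $p$ the nonsplit self-extension of the trivial module is not a subquotient of $k\oplus k$. In general, ``subquotients of objects in a fixed class'' is closed under subquotients and finite direct sums but not under extensions, so ``finite length and contains all simples'' does not by itself yield the conclusion. The gap can be closed either by invoking enough projectives that are direct summands of tensor powers of $[pt]$ (as the paper's Remark following Corollary \ref{KpreT} explains), or, as the paper does, by simply citing the corresponding known statement for blocks of $\C_q$ and transporting it across the block equivalence of Proposition \ref{TLblocks} --- which once stated makes the Serre-subcategory detour unnecessary. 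You do correctly identify that the ultimate content lies in the $\C_q$ comparison, so the fix is local, but as written the intermediate step does not hold.
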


\begin{proof} We already know that the category $\K_d^0$ is an abelian tensor category (see Corollary \ref{Kdabtens}).
It is obvious that $\Hom$'s are finite dimensional and $\End(\be)=F$ since this is true in
the category $\K_d$. The category $\K_d^0$ is rigid: if $\Delta \ot K$ is concentrated in degree zero
then the same is true for $\Delta \ot K^*\simeq (\Delta \ot K)^*$. It remains to check that any object
of $\K_d^0$ has finite length. It is clear that we can verify this block by block. The result is
clear for semisimple blocks since by Proposition \ref{blockbyblock}(a) the core of the corresponding
$t-$structure identifies with $\Vec_F$. This is also clear for non-semisimple blocks since 
the corresponding $t-$structure (described in Proposition \ref{blockbyblock}) identifies with 
the $t-$structure on a block of the Temperley-Lieb category and the corresponding core has all objects
of finite length since this is true for the category $\C_q$. This proves (a).

For (b) we use the same argument as above: it is sufficient to verify the statement block by block.
Here the result is trivial for semisimple blocks and is known for non-semisimple ones since it is
known to hold for the category $\C_q$. 
\end{proof}

\begin{remark} Using similar techniques of importing known results about the category $\C_q$
to the category $\K_d^0$ we can obtain  detailed information about this category. In particular,
we see that the category $\K_d^0$ has enough projective objects; all indecomposable 
projective objects are direct summands of tensor powers of $[pt]$ (but powers of $[pt]$ are not
projective in general; for example $[pt]^{\otimes 0}=\be$ is not projective). Thus Corollary \ref{KpreT}(b)
can be improved: any object of the category $\K_d^0$ is isomorphic to a quotient of a direct sum 
of tensor powers of $[pt]$.
\end{remark}

\section{Universal property} 
\subsection{Extension property of the category $\K_d^0$} \label{extension}
We start with the following

\begin{proposition} \label{ext}
 Let $\T$ be a pre-Tannakian category and let $\cat{F}: \uRep(S_d)\to \T$
be a tensor functor. Assume that $\cat{F}(\Delta)\ne 0$. 
Then the functor $\cat{F}$ (uniquely) factorizes as
$\uRep(S_d)\to \K^0_d\to \T$ where $\K^0_d\to \T$ is an exact tensor functor.
\end{proposition}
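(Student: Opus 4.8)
The plan is to use the $t$-structure machinery together with Deligne's Lemma \ref{Delemma}. First I would observe that since $\cat{F}(\Delta)\neq 0$, the functor $\cat{F}$ does \emph{not} kill all negligible morphisms. Indeed, by Corollary \ref{DeltaGen} the tensor ideal generated by $\id_\Delta$ is the ideal of all negligible morphisms; if $\cat{F}$ killed all negligible morphisms it would send $\id_{\cat{F}(\Delta)}=\cat{F}(\id_\Delta)$ to zero, contradicting $\cat{F}(\Delta)\neq 0$ and the fact that $\End_\T(\be)=F$ forces $\id_0$ to be the only such identity. More to the point, I want to extend $\cat{F}$ to the bounded homotopy category: since $\T$ is abelian, any tensor functor $\cat{F}:\uRep(S_d)\to\T$ extends canonically to an exact (triangulated) tensor functor $K\cat{F}:\K_d=K^b(\uRep(S_d))\to D^b(\T)$, where $D^b(\T)$ carries its standard $t$-structure with heart $\T$.

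The key step is then to show that $K\cat{F}$ is $t$-exact for the pair $(\K_d^{\le 0},\K_d^{\ge 0})$, i.e. that $K\cat{F}(\K_d^{\le 0})\subseteq D^{\le 0}(\T)$ and $K\cat{F}(\K_d^{\ge 0})\subseteq D^{\ge 0}(\T)$. Granting this, restriction to the hearts gives an exact functor $\K_d^0\to\T$ whose composition with $\uRep(S_d)\to\K_d^0$ (the embedding $A\mapsto A[0]$) recovers $\cat{F}$, and uniqueness follows since $\K_d^0$ is generated under the abelian operations by the image of $\uRep(S_d)$ (Corollary \ref{KpreT}(b)), so any two exact tensor extensions agreeing on $\uRep(S_d)$ coincide. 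The tensor structure on $\K_d^0\to\T$ is inherited from that of $K\cat{F}$ by Corollary \ref{Kdabtens}(b). To prove $t$-exactness, I would use Proposition \ref{vanish}: $K\in\K_d^{\le 0}$ iff $\Hom_{\K_d}(K,A[n])=0$ for all negligible $A\in\uRep(S_d)$ and $n<0$. By Lemma \ref{Delemma}, $\Delta\otimes K$ is split (Proposition \ref{split}), so it suffices to understand $K\cat{F}(\Delta\otimes K)=\cat{F}(\Delta)\otimes K\cat{F}(K)$; the point is that since $\Delta\otimes-$ factors through the semisimple category $\uRep(S_{-1})$, the complex $\Delta\otimes K$ is a direct sum of shifts of objects of $\uRep(S_d)$, and one reads off the degrees in which it lives. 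If $K\in\K_d^{\le 0}$ then $\Delta\otimes K$ lives in degrees $\le 0$, hence so does $\cat{F}(\Delta)\otimes K\cat{F}(K)$ in $D^b(\T)$; and since $\cat{F}(\Delta)\neq 0$ is rigid (being self-dual, $\Delta^*=\Delta$), tensoring with it is faithful and, crucially, \emph{conservative on the degrees of cohomology}—more precisely, for $X\in D^b(\T)$ with $\cat{F}(\Delta)$ a nonzero rigid object, $H^i(\cat{F}(\Delta)\otimes X)=\cat{F}(\Delta)\otimes H^i(X)$ (as $\cat{F}(\Delta)$ is flat/exact for $\otimes$, tensoring being exact in a rigid abelian tensor category), and $\cat{F}(\Delta)\otimes H^i(X)=0$ forces $H^i(X)=0$ since tensoring by a nonzero object in an abelian rigid tensor category with simple unit is faithful on objects. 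Hence $H^i(K\cat{F}(K))=0$ for $i>0$, i.e. $K\cat{F}(K)\in D^{\le 0}(\T)$. The $\K_d^{\ge 0}$ case is symmetric.

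\textbf{Main obstacle.} The delicate point is the last one: deducing $H^i(K\cat{F}(K))=0$ from $H^i(\cat{F}(\Delta)\otimes K\cat{F}(K))=0$. This requires knowing that $-\,\otimes\cat{F}(\Delta)$ is exact on $\T$ (true in any rigid tensor category) and that it is faithful on objects, which relies on $\cat{F}(\Delta)\neq0$ together with $\End_\T(\be)=F$ (so $\be$ is simple) exactly as in Remark \ref{tensornonzero}. One must also be careful that $K\cat{F}$ genuinely lands in $D^b(\T)$ with the expected behaviour on split complexes, i.e. that for $A\in\uRep(S_d)$ the object $K\cat{F}(A[n])$ is $\cat{F}(A)$ placed in degree $-n$; this is immediate from the construction of the extension. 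Finally, checking that the resulting functor $\K_d^0\to\T$ is a tensor functor (not just exact and additive) uses that $\otimes$ on $\K_d^0$ is the restriction of $\otimes$ on $\K_d$, which is handled by Corollary \ref{Kdabtens}(b), and that $K\cat{F}$ is already a tensor functor on $\K_d$. I would organize the write-up so that the genuinely new content is the $t$-exactness of $K\cat{F}$, with everything else being formal consequences of the $t$-structure formalism and rigidity.
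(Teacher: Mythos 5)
Your proposal is correct and follows essentially the same route as the paper: extend $\cat{F}$ to the homotopy category, exploit Lemma \ref{Delemma} and the splitness of $\Delta\otimes K$ to pin down the cohomological degrees of $\cat{F}(\Delta)\otimes\cat{F}(K)$, then use exactness of $-\otimes\cat{F}(\Delta)$ (rigidity) together with its faithfulness on objects ($\cat{F}(\Delta)\neq 0$ in a rigid abelian tensor category with simple unit) to conclude that $H^i(\cat{F}(K))$ vanishes in the required degrees. The paper states this only for $K\in\K_d^0$ (showing $H^i(\cat{F}(K))=0$ for $i\neq 0$ and defining $\K_d^0\to\T$ as $K\mapsto H^0(\cat{F}(K))$) rather than phrasing it as $t$-exactness of the extension on all of $\K_d$, but this is a cosmetic difference; the decisive lemma is identical.
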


\begin{proof} Let $K\in \K_d^0$. We can consider $\cat{F}(K)\in K^b(\T)$. Since the category $\T$
is abelian we can talk about cohomology of $\cat{F}(K)$. 

\begin{lemma} $H^i(\cat{F}(K))=0$ for $i\ne 0$.
\end{lemma}

\begin{proof} Notice that for any $0\ne X\in \T$ we have $X\ot \cat{F}(\Delta)\ne 0$.
Since the  endofunctor $-\ot \cat{F}(\Delta)$ of the category $\T$ is exact (see e.g.~\cite[Proposition 2.1.8]{BK}) we see
that $H^i(\cat{F}(K \ot \Delta))=H^i(\cat{F}(K)\ot \cat{F}(\Delta))=H^i(\cat{F}(K))\ot \cat{F}(\Delta)$. 
By definition of $\K_d^0$ the cohomology of $\cat{F}(K\ot \Delta)$ is concentrated in degree zero
and we are done.
\end{proof}

We now define the functor $\K^0_d\to \T$ as $K\mapsto H^0(\cat{F}(K))$ with the tensor structure induced by the 
one on $\cat{F}$ (or rather its extension to $K^b(\uRep(S_d))\to \K^b(\T)$). 
\end{proof}

\begin{remark} Here is an example of tensor functor between abelian rigid tensor categories which is not exact. Let $k$ be
a field of characteristic 2 and consider the category $\Rep_k(\BZ/2\BZ)$ of finite dimensional $k-$representations of $\BZ/2\BZ$.
This category has precisely 2 indecomposable objects: one is simple and 1-dimensional; the other is projective and has categorical dimension 0. Thus the 
quotient of $\Rep_k(\BZ/2\BZ)$ by the negligible morphisms is equivalent to the category $\Vec_k$ of finite dimensional vector
spaces over $k$. Clearly the quotient functor $\Rep_k(\BZ/2\BZ)\to \Vec_k$ is not exact since it sends the projective object
to zero. One can also construct a similar example over a field of characteristic zero using the representation category of the additive
supergroup of a 1-dimensional odd space.
\end{remark}

\subsection{Fundamental groups of $\K_d^0$ and $\Rep(S_d)$}
Let $\pi$ be the fundamental group of the pre-Tannakian category $\K_d^0$. The action of
$\pi$ on $[pt]\in \uRep(S_d)\subset \K_d^0$ defines a homomorphism $\pi \to S_\bI$ where
$\bI =\Spec([pt])$. 

\begin{proposition}\label{fgusd}
 The homomorphism $\eps: \pi \to S_\bI$ is in fact an isomorphism.
\end{proposition}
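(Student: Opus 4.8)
The plan is to show that $\eps\colon\pi\to S_\bI$ is an isomorphism of affine group schemes in $\K_d^0$ by exhibiting an inverse, using the universal (extension) property of $\K_d^0$ together with the general theory of fundamental groups from \cite[\S 8]{Del90}. The key observation is that $S_\bI$, being the automorphism group scheme of the object $\bI=\Spec([pt])$ which satisfies (a), (b), (c) with $t=d$, acts on every object of the subcategory of $\K_d^0$ tensor-generated by $[pt]$; by Corollary \ref{KpreT}(b) this subcategory is all of $\K_d^0$. So first I would check that the $S_\bI$-action on $[pt]$ extends functorially and compatibly with tensor products to an action of $S_\bI$ on the identity functor of $\K_d^0$, i.e.\ that $S_\bI$ is (a quotient of, and receives a map from) the fundamental group. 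By the universal property of the fundamental group \cite[\S 8.13]{Del90}, any affine group scheme acting functorially on $\id_{\K_d^0}$ compatibly with $\otimes$ receives a canonical homomorphism from $\pi$; applying this to $S_\bI$ reproduces $\eps$, and the real content is to build a homomorphism in the other direction.

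The second, main step is to produce a homomorphism $S_\bI\to\pi$ splitting $\eps$, or more precisely to show $\eps$ is faithfully flat and a monomorphism. For faithful flatness I would use the criterion that $\eps$ is faithfully flat iff the restriction functor $\Rep(S_\bI)\to\Rep(\pi)=\K_d^0$ is fully faithful on the relevant subcategory and every object of $\K_d^0$ is a subquotient of one coming from $\Rep(S_\bI)$: but the functor $\cat{F}_{[pt]}\colon\uRep(S_d)\to\Rep(S_\bI,\eps)$ followed by the forgetful functor is exactly the embedding $\uRep(S_d)\subset\K_d^0$, and by Corollary \ref{KpreT}(b) every object of $\K_d^0$ is a subquotient of a sum of tensor powers of $[pt]=\cat{F}_{[pt]}(X)$, which are objects of $\Rep(S_\bI,\eps)$; hence the forgetful functor $\Rep(S_\bI,\eps)\to\K_d^0$ is essentially surjective onto a generating class and one deduces $\eps$ is faithfully flat. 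For the injectivity (monomorphism) of $\eps$ — equivalently that the forgetful functor $\Rep(S_\bI)\to\K_d^0$ does not kill too much — I would invoke the extension property (Proposition \ref{ext}): since $\cat{F}_{[pt]}(\Delta)=\Delta\neq 0$ in $\K_d^0$ (indeed $\id_\Delta=x_{d+1}\ne 0$), the canonical functor $\uRep(S_d)\to\Rep(S_\bI,\eps)$ factors through $\uRep(S_d)\to\K_d^0$ by an exact tensor functor $\K_d^0\to\Rep(S_\bI,\eps)$, which together with the forgetful functor $\Rep(S_\bI,\eps)\to\K_d^0$ and the fact that both compose to the identity on the generating objects forces an equivalence $\K_d^0\simeq\Rep(S_\bI,\eps)$; under this equivalence $\pi$ is identified with $S_\bI$ and $\eps$ with the identity.

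Concretely I would proceed: (i) recall from \cite[\S 8.10, 8.20]{Del07} that $\Rep(S_\bI,\eps)$ is pre-Tannakian with fundamental group mapping to $S_\bI$, and that the forgetful functor $\Rep(S_\bI,\eps)\to\K_d^0$ is exact, faithful, tensor; (ii) use Proposition \ref{ext} with $\T=\Rep(S_\bI,\eps)$ and $\cat{F}=\cat{F}_{[pt]}\colon\uRep(S_d)\to\Rep(S_\bI,\eps)$ (valid since $\cat{F}(\Delta)\ne 0$) to get an exact tensor functor $G\colon\K_d^0\to\Rep(S_\bI,\eps)$ with $G|_{\uRep(S_d)}=\cat{F}_{[pt]}$; (iii) observe the composite $\K_d^0\xrightarrow{G}\Rep(S_\bI,\eps)\to\K_d^0$ restricts to the identity on $\uRep(S_d)$, hence — both functors being exact and every object of $\K_d^0$ a subquotient of tensor powers of $[pt]$ by Corollary \ref{KpreT}(b) — is isomorphic to $\id_{\K_d^0}$; similarly the other composite is $\id_{\Rep(S_\bI,\eps)}$ by Corollary \ref{KpreT}(b) applied within $\Rep(S_\bI,\eps)$ (its objects are subquotients of sums of tensor powers of $[pt]=\cat{F}_{[pt]}(X)$, cf.\ \cite[\S 8.20]{Del07}); (iv) conclude $\K_d^0\simeq\Rep(S_\bI,\eps)$ as pre-Tannakian categories over $\uRep(S_d)$, so by functoriality of the fundamental group \cite[\S 8.13, 8.15]{Del90} the induced map on fundamental groups is an isomorphism, and chasing the action on $[pt]$ shows this map is precisely $\eps$.

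\textbf{Main obstacle.} The delicate point is step (iii): showing that an exact tensor endofunctor of $\K_d^0$ which is the identity on the generating subcategory $\uRep(S_d)$ is automatically isomorphic to the identity. This requires knowing that $\uRep(S_d)$ tensor-generates $\K_d^0$ under subquotients \emph{and} that an exact tensor functor is determined (up to tensor isomorphism) by its restriction to such a generating class — one must check the natural isomorphism on tensor powers of $[pt]$ descends compatibly to subquotients, which uses exactness plus the tensor structure in an essential way. Equivalently, one must verify that the unit of the adjunction between $G$ and the forgetful functor is an isomorphism on $[pt]^{\otimes n}$ and that this propagates; this is where the hypotheses of Corollary \ref{KpreT}(b) and the exactness statement of Proposition \ref{ext} do the real work, and where I would expect to spend the most care in a complete write-up.
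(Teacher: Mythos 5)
The opening moves of your plan coincide with the paper's: injectivity of $\eps$ follows because $[pt]$ generates $\K_d^0$ (Corollary \ref{KpreT}(b)), and Proposition \ref{ext} applied to the canonical tensor functor $\uRep(S_d)\to\Rep(S_\bI,\eps)$ (which does not kill $\Delta$) produces an exact tensor functor $\cat{F}\colon\K_d^0\to\Rep(S_\bI,\eps)$ extending it. But after these two steps the paper finishes in a few lines entirely at the level of group schemes: by \cite[Proposition B1]{Del07} the fundamental group of $\Rep(S_\bI,\eps)$ is $S_\bI^\eps=\Aut_{\Rep(S_\bI,\eps)}(\bI)$; functoriality of the fundamental group from \cite[\S 8]{Del90} applied to $\cat{F}$ yields a homomorphism $S_\bI^\eps\to\cat{F}(\pi)$; the composite $S_\bI^\eps\to\cat{F}(\pi)\subset\cat{F}(S_\bI)=S_\bI^\eps$ is the identity, so $\cat{F}(\pi)=\cat{F}(S_\bI)$, and since $\cat{F}$ is faithful and exact one concludes $\pi=S_\bI$. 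No categorical equivalence is constructed along the way.

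Your route is heavier and carries a gap exactly where you flagged it. You want both composites $\K_d^0\to\Rep(S_\bI,\eps)\to\K_d^0$ and $\Rep(S_\bI,\eps)\to\K_d^0\to\Rep(S_\bI,\eps)$ to be isomorphic to the identity. For the first, the uniqueness clause of Proposition \ref{ext} does settle it: the composite is an exact tensor functor extending $\id\colon\uRep(S_d)\to\K_d^0$ and $\id(\Delta)\ne 0$. For the second there is no analogue of Proposition \ref{ext} for $\Rep(S_\bI,\eps)$, and the bare facts ``exact tensor, identity on tensor powers of $[pt]$, every object a subquotient of those'' do not automatically force the functor to be isomorphic to $\id_{\Rep(S_\bI,\eps)}$; you would need something at the strength of \cite[Th. 8.17]{Del90}. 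But in the paper that theorem is invoked only \emph{after} Proposition \ref{fgusd} is proved (to obtain Theorem \ref{mainly} and Corollary \ref{abD}); your plan effectively proves Corollary \ref{abD} first in order to deduce the proposition, which reverses the paper's logical order and is precisely what introduces the gap. A minor additional slip: the restriction-functor criteria you invoke for ``$\eps$ faithfully flat'' and ``$\eps$ a monomorphism'' are swapped — ``every object of $\K_d^0$ is a subquotient of a restricted object'' characterizes $\eps$ being a closed immersion (injective), not faithfully flat.
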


\begin{proof} Since the object $[pt]$ generates $\K_d^0$ (see Corollary \ref{KpreT}(b)) the homomorphism 
$\eps : \pi \to S_\bI$ is an embedding. 

Consider the category $\Rep(S_\bI,\eps)$. It is shown in (the proof of) \cite[Proposition B1]{Del07}
that its fundamental group is precisely the group $S_\bI^\eps=\Aut_{\Rep(S_\bI,\eps)}(\bI)$. We have an obvious
tensor functor $\uRep(S_d)\to \Rep(S_\bI,\eps)$; by Proposition \ref{ext} it extends to a tensor functor
$\cat{F}: \K_d^0\to \Rep(S_\bI,\eps)$. Thus we have a homomorphism $S_\bI^\eps \to \cat{F}(\pi)$. It is clear
that the composition $S_\bI^\eps \to \cat{F}(\pi)\subset \cat{F}(S_\bI)=S_\bI^\eps$ is the identity map. 
The result follows.
%
%{\bf Alternatively:} look at $\K_d^0\to \Rep(S_\bI,\eps)\to \K_d^0$; composition is identity and
%any object in the middle is subquotient of $
%\cat{F}(X)$. It follows that this is an equivalence (since 
%these functors do not change the length). Thus the fundamental group of $\K_d^0$ is the same
%as for $\Rep$. 
\end{proof}

We also recall here the following result from \cite[8.14(ii)]{Del90}:

\begin{proposition}\label{fgsd}
 The fundamental group of the category $\Rep(S_d)$ is the group $S_d$ acting
on itself by conjugation. $\square$
\end{proposition}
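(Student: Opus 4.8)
The statement to prove is Proposition \ref{fgsd}: the fundamental group of $\Rep(S_d)$ is $S_d$ acting on itself by conjugation. This is attributed to \cite[8.14(ii)]{Del90}, so the "proof" is essentially a pointer/recollection of Deligne's general computation, but let me sketch how I would actually establish it.

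\textbf{Approach.} The plan is to use Deligne's general description of the fundamental group of the representation category $\Rep(G)$ of a finite (or affine group scheme) group $G$ over a field of characteristic zero. Recall from \cite[\S 8]{Del90} that for a neutral Tannakian category $\Rep(G)$, the fundamental group $\pi$ is an affine group scheme \emph{in} the category $\Rep(G)$ itself; concretely it is the group scheme $\underline{G}$ obtained from $G$ equipped with the conjugation action of $G$ on itself. This is Deligne's 8.13-8.14: the fundamental group of $\Rep(G)$ is represented by the commutative Hopf algebra $\mathcal{O}(G)$ with $G$ acting by conjugation, and the canonical action of $\pi$ on an object $V \in \Rep(G)$ (i.e.\ on a representation $\rho: G \to GL(V)$) is via $\rho$ itself — each $g \in G$ acts on $V$ by $\rho(g)$, and this is $G$-equivariant precisely because of the conjugation action on the $\pi$-side.

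\textbf{Key steps.} First I would recall the characterization of the fundamental group via the regular representation: for $\Rep(G)$ with fiber functor $\omega$ (the forgetful functor), $\pi = \underline{\mathrm{Aut}}^\otimes(\mathrm{id}_{\Rep(G)})$, and one computes $\underline{\mathrm{Aut}}^\otimes(\mathrm{id})$ by noting that a tensor-natural automorphism of the identity functor is determined by its value on the regular representation $F[G]$, compatibly with the algebra structure. Second, I would identify such automorphisms: an element of $\pi(R)$ for a commutative algebra $R$ in $\Rep(G)$ amounts to a grouplike element of $R \otimes F[G]$ (equivalently an $R$-point of $G$) subject to the conjugation-equivariance; unwinding, $\pi$ is exactly $G$ with the conjugation action, i.e.\ $S_d$ acting on itself by conjugation when $G = S_d$. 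Third, I would verify that the canonical action of this $\pi$ on an arbitrary object — in particular on $X_{\mathbf{I}} = [pt]$'s image — is the tautological one, which is what makes the comparison with $S_{\mathbf{I}}$ in Proposition \ref{fgusd} meaningful. Since all of this is exactly the content of \cite[8.14(ii)]{Del90}, in the write-up I would simply state that this is Deligne's computation and cite it, rather than reproving the Tannakian reconstruction.

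\textbf{Main obstacle.} There is no real obstacle here — this is a direct citation of a known result, and the substance lives entirely in Deligne's machinery of \cite[\S 8]{Del90}. The only thing requiring care is bookkeeping: making sure the conjugation action (rather than, say, the trivial action or left translation) is the correct one, which follows from the requirement that the canonical action of $\pi$ on each object be functorial \emph{and} compatible with morphisms of $G$-representations — left translation would not commute with $G$-morphisms, whereas conjugation-twisted-by-$\rho$ does. Accordingly, the proof is a one-line appeal to \cite[8.14(ii)]{Del90}, as the $\square$ in the statement already signals. $\square$
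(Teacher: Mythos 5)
Your proposal is correct and takes exactly the same route as the paper, which gives no proof at all and simply cites \cite[8.14(ii)]{Del90} (as the $\square$ in the statement indicates). Your elaboration of what underlies that citation — the identification of the fundamental group of $\Rep(G)$ with $G$ equipped with the conjugation action, and the sanity check that left translation would not be $G$-equivariant — is accurate and consistent with Deligne's treatment.
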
 

\begin{remark} It is explained in \cite[8.14(ii)]{Del90} that we can replace $S_d$ with any affine algebraic group $G$ in the statement of the previous proposition.  
\end{remark} 

\subsection{Proof of Theorem \ref{main}} We start with the following result:

\begin{theorem} \label{mainly}
Let $\T$ be a pre-Tannakian category and let $\cat{F}: \uRep(S_d)\to \T$
be a tensor functor with $T=\cat{F}([pt])$.

(a) If $\cat{F}(\Delta)=0$ then the category 
$\Rep(S_\bI,\eps)$ endowed with the functor $\cat{F}_T: \uRep(S_d)\to \Rep(S_\bI,\eps)$ is equivalent
to $\Rep(S_d)$ equipped with the functor $\uRep(S_d)\to \Rep(S_d)$.

(b) If $\cat{F}(\Delta)\ne 0$ then the category 
$\Rep(S_\bI,\eps)$ endowed with the functor $\cat{F}_T: \uRep(S_d)\to \Rep(S_\bI,\eps)$ is equivalent
to $\K_d^0$ equipped with the functor $\uRep(S_d)\to \K_d^0$.
\end{theorem}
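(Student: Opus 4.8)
The plan is to reduce both parts to the fundamental group computations of Propositions \ref{fgusd} and \ref{fgsd}, using the universal property of $\Rep(S_\bI,\eps)$ together with the dichotomy supplied by Corollary \ref{DeltaGen} (the tensor ideal generated by $\id_\Delta$ is exactly the negligibles). First I would recall the general principle (from \cite[\S 8]{Del90}): if $\mathcal{A}$ is a pre-Tannakian category generated as a tensor category by an object $T$ satisfying (a), (b), (c) with $t=d$, then $\mathcal{A}$ is equivalent to $\Rep(S_{\Spec T},\eps_{\mathcal{A}})$ where $\eps_{\mathcal{A}}$ is the canonical map from the fundamental group of $\mathcal{A}$ to $S_{\Spec T}$; and this equivalence is compatible with the canonical functor from $\uRep(S_d)$. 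Concretely, applying $\cat{F}_T$ makes the diagram $\uRep(S_d)\to \Rep(S_\bI,\eps)$ depend on $\T$ only through the image of $\uRep(S_d)$ inside $\T$ (the full tensor subcategory generated by $T$), so it suffices to analyze the two candidate images.

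For part (a), suppose $\cat{F}(\Delta)=0$. Then $\cat{F}$ kills $\id_\Delta$, hence by Corollary \ref{DeltaGen} it kills every negligible morphism; combined with the fact (\S \ref{Tabc}) that the canonical functor $\uRep(S_d)\to \Rep(S_d)$ is surjective on Hom's with kernel exactly the negligibles, $\cat{F}$ factors as $\uRep(S_d)\to \Rep(S_d)\to \T$, and the second arrow is a faithful (hence exact) tensor functor from a semisimple category. Therefore the tensor subcategory of $\T$ generated by $T$ is equivalent to $\Rep(S_d)$ as a symmetric tensor category with distinguished algebra object, and $\Spec T \cong \bI$ with $S_\bI$ (computed inside this subcategory) the constant group scheme $S_d$. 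By Proposition \ref{fgsd} the fundamental group of $\Rep(S_d)$ maps isomorphically onto $S_d$, so $\Rep(S_\bI,\eps)$ is exactly $\Rep(S_d)$, and unwinding definitions identifies $\cat{F}_T$ with the canonical functor $\uRep(S_d)\to \Rep(S_d)$.

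For part (b), suppose $\cat{F}(\Delta)\neq 0$. By Proposition \ref{ext}, $\cat{F}$ extends through an exact tensor functor $\K_d^0\to \T$, so the tensor subcategory of $\T$ generated by $T$ is a quotient (more precisely, the essential image under an exact tensor functor) of $\K_d^0$; but since $[pt]$ generates $\K_d^0$ (Corollary \ref{KpreT}(b)) and $\K_d^0$ is pre-Tannakian (Corollary \ref{KpreT}(a)), one reduces to showing that $\Rep(S_\bI,\eps)$ built inside $\T$ coincides with the one built inside $\K_d^0$ — which in turn follows from Proposition \ref{fgusd}, i.e.\ $\eps:\pi\to S_\bI$ is an isomorphism for $\K_d^0$, together with functoriality of the construction $\mathcal{A}\mapsto \Rep(S_{\Spec T},\eps_{\mathcal{A}})$ under the exact functor $\K_d^0\to \T$. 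Here the key point is that an exact tensor functor between pre-Tannakian categories induces a morphism of fundamental groups \cite[\S 8]{Del90}, and chasing this through gives that both $\Rep(S_\bI,\eps)$-constructions are canonically equivalent to $\Rep(S_\bI^\pi,\cdot)$ for $\K_d^0$; the compatibility with $\cat{F}_T$ is then automatic from the construction in Proposition \ref{uRepuni}. I expect the main obstacle to be exactly this last bookkeeping: verifying that the equivalence one produces genuinely intertwines $\cat{F}_T$ with the functor $\uRep(S_d)\to \K_d^0$ (and not merely that the two abelian tensor categories are abstractly equivalent), which requires keeping careful track of the distinguished object $T$ and the algebra structure throughout, and invoking the uniqueness clause of Proposition \ref{uRepuni} at the right moment.
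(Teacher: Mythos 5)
Your proposal follows essentially the same route as the paper: reduce both cases to factoring $\cat{F}$ through $\Rep(S_d)$ (via Corollary \ref{DeltaGen}) or extending it through $\K_d^0$ (via Proposition \ref{ext}), compute the fundamental groups of these two categories (Propositions \ref{fgsd} and \ref{fgusd}), and then invoke Deligne's general Tannakian recognition result. The paper's actual citation for the final step is \cite[Th\'eor\`eme 8.17]{Del90}, which is what your ``general principle from \cite[\S 8]{Del90}'' is gesturing at.

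One caveat worth flagging: the ``general principle'' as you have stated it is false. Being generated as a tensor category by an object $T$ satisfying (a), (b), (c) with $t=d$ does \emph{not} imply $\mathcal{A}\simeq\Rep(S_{\Spec T},\eps_{\mathcal{A}})$. Take $\mathcal{A}=\Vec_F$ and $T=F^d$ with pointwise multiplication: then $T$ generates $\Vec_F$, $\pi_{\Vec_F}$ is trivial, $S_\bI\cong S_d$ is the constant group scheme, $\eps$ is the inclusion of the trivial group, and $\Rep(S_\bI,\eps)\cong\Rep(S_d)\not\simeq\Vec_F$ for $d\geq 2$. What is actually true, and what \cite[Th\'eor\`eme 8.17]{Del90} delivers, is that for an exact tensor functor $u:\mathcal{A}\to\T$ between pre-Tannakian categories one gets a canonical equivalence $\mathcal{A}\simeq\Rep_\T(u(\pi_{\mathcal{A}}),\eps)$ compatible with $u$; to identify this with $\Rep_\T(S_\bI,\eps)$ one additionally needs $\eps_\mathcal{A}:\pi_{\mathcal{A}}\to S_{\Spec T}$ to be an \emph{isomorphism}, not merely a closed embedding coming from $T$ generating. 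This stronger hypothesis is exactly what Propositions \ref{fgsd} and \ref{fgusd} supply for $\Rep(S_d)$ and $\K_d^0$ respectively, so your argument does go through once the principle is corrected to the form of 8.17 and the isomorphism (rather than generation) hypothesis is made explicit. The ``bookkeeping'' you anticipate at the end — verifying that the produced equivalence intertwines $\cat{F}_T$ with $\uRep(S_d)\to\K_d^0$ — is also absorbed by 8.17, since the equivalence there is constructed precisely so that it factors the given functor.
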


\begin{proof} (a) In this case  $\cat{F}$ factorizes as $\uRep(S_d)\to \Rep(S_d)\to \T$
(see Corollary \ref{DeltaGen}). The result follows from \cite[Theoreme 8.17]{Del90} and Proposition \ref{fgsd}.

(b) In this case $\cat{F}$ extends to a functor $\uRep(S_d)\to \K_d^0\to \T$ by Proposition
\ref{ext}. The result follows from \cite[Theoreme 8.17]{Del90} and Proposition \ref{fgusd}.
\end{proof}

If we apply Theorem \ref{mainly}(b) to the category $\T =\uRep(S_{-1})$ and the functor 
$\uRes^{S_d}_{S_{-1}}:\uRep(S_d)\to \T$ described in \S \ref{222} and  \S \ref{DLsec} we obtain the following

\begin{corollary} \label{abD}
The category $\uRep^{ab}(S_d)$ endowed with the functor $\uRep(S_d)\to
\uRep^{ab}(S_d)$ is equivalent to the category $\K_d^0$ with the functor $\uRep(S_d)\to \K_d^0$.
$\square$
\end{corollary}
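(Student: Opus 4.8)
The plan is to apply Theorem~\ref{mainly}(b) to the specific pre-Tannakian target category $\T = \uRep(S_{-1})$ together with the functor $\uRes^{S_d}_{S_{-1}}\colon \uRep(S_d)\to\uRep(S_{-1})$. The key point that makes this work is that $\uRes^{S_d}_{S_{-1}}$ does \emph{not} kill $\Delta$: indeed, by Lemma~\ref{Delemma} (Deligne's lemma) the composition $\cat{F}_\Delta\circ\uRes^{S_d}_{S_{-1}}$ is isomorphic to $\Delta\otimes-$, so $\uRes^{S_d}_{S_{-1}}(\Delta)$ is a summand of $\uRes^{S_d}_{S_{-1}}([pt])^{\otimes(d+1)}$ in a way that feeds into the nonzero object $\Delta^+\oplus\Delta^{\oplus d+1}$; more directly, one checks $\uRes^{S_d}_{S_{-1}}(\Delta)\neq 0$ because $\Delta\otimes-$ is faithful on objects (this is exactly the content of Proposition~\ref{split} together with Remark~\ref{tensornonzero}, which guarantee $\Delta\otimes X\neq 0$ for $X\neq 0$, and $\Delta\otimes-$ factors through $\uRes^{S_d}_{S_{-1}}$). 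So the hypothesis $\cat{F}(\Delta)\neq 0$ of Theorem~\ref{mainly}(b) is satisfied with $\cat{F}=\uRes^{S_d}_{S_{-1}}$.

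Once that hypothesis is in place, Theorem~\ref{mainly}(b) tells us that $\Rep(S_\bI,\eps)$ equipped with $\cat{F}_T\colon\uRep(S_d)\to\Rep(S_\bI,\eps)$ is equivalent to $\K_d^0$ equipped with its canonical functor $\uRep(S_d)\to\K_d^0$. The remaining observation is purely a matter of identification: for this particular choice of $\T$ and functor, the object $T=\uRes^{S_d}_{S_{-1}}([pt]) = [pt]\oplus[\varnothing]^{\oplus d+1}\in\uRep(S_{-1})$ and the associated scheme $\bI=\Spec(T)$, homomorphism $\eps\colon\pi\to S_\bI$, category $\Rep(S_\bI,\eps)$, and functor $\cat{F}_T$ are exactly the data used in \S\ref{222} to \emph{define} $\uRep^{ab}(S_d)$ as $\Rep(S_\bI,\eps)$ together with the fully faithful functor $\uRep(S_d)\to\uRep^{ab}(S_d)$. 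Hence $\Rep(S_\bI,\eps)$ with $\cat{F}_T$ \emph{is} $\uRep^{ab}(S_d)$ with its canonical embedding, essentially by construction, and Theorem~\ref{mainly}(b) then yields the desired equivalence with $\K_d^0$.

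I do not expect a serious obstacle here, since the corollary is essentially a specialization of Theorem~\ref{mainly}(b) combined with unwinding the definition of $\uRep^{ab}(S_d)$ from \S\ref{222}; the only thing requiring a moment's care is verifying $\uRes^{S_d}_{S_{-1}}(\Delta)\neq 0$, which as noted follows from Lemma~\ref{Delemma} (the functor $\Delta\otimes-$ factors through $\uRes^{S_d}_{S_{-1}}$, and $\Delta\otimes[pt]^{\otimes 0}=\Delta\neq 0$ by Proposition~\ref{xidemp} and \cite[Remark 3.22]{CO}). One should also note that the equivalence of Theorem~\ref{mainly}(b) is compatible with the embeddings of $\uRep(S_d)$ on both sides, which is built into the statement of that theorem. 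Thus the proof reduces to: (1) check $\cat{F}(\Delta)\neq 0$ for $\cat{F}=\uRes^{S_d}_{S_{-1}}$; (2) invoke Theorem~\ref{mainly}(b); (3) recognize the output as the definition of $\uRep^{ab}(S_d)$.
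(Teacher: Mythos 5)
Your proposal is correct and is precisely the paper's own one-line argument: apply Theorem~\ref{mainly}(b) to $\T=\uRep(S_{-1})$ with $\cat{F}=\uRes^{S_d}_{S_{-1}}$ and observe that the resulting $\Rep(S_\bI,\eps)$ together with $\cat{F}_T$ is, by the construction in \S\ref{222}, the definition of $\uRep^{ab}(S_d)$ with its canonical embedding. The only addition is that you explicitly verify the hypothesis $\uRes^{S_d}_{S_{-1}}(\Delta)\neq 0$ (via Lemma~\ref{Delemma} and Remark~\ref{tensornonzero}, since $\Delta\otimes\Delta=\cat{F}_\Delta(\uRes^{S_d}_{S_{-1}}(\Delta))\neq 0$), a detail the paper leaves implicit; this is a reasonable and correct check.
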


Clearly Theorem \ref{mainly} and Corollary \ref{abD} together imply Theorem \ref{main}.

\bibliographystyle{ams-alpha}

%\nocite{BK}%\nocite{COw}

%\renewcommand{\bibname}{\textsc{references}} 
%\bibliographystyle{alphanum}	
%	\addcontentsline{toc}{chapter}{\bibname}
%	\bibliography{references}		

\end{document}